\numberwithin{equation}{section}
\renewcommand{\subsubsection}{\@startsection
{subsubsection}
{3}
{0mm}
{\baselineskip}
{-0.5\baselineskip}
{\normalfont\normalsize\bfseries}}
\newtheorem{theorem}{Theorem}
\newtheorem{lemma}[theorem]{Lemma}
\newtheorem{proposition}[theorem]{Proposition}
\newtheorem{example}[theorem]{Example}
\newtheorem{conjecture}[theorem]{Conjecture}
\newtheorem{corollary}[theorem]{Corollary}
\newtheorem{definition}[theorem]{Definition}
\newtheorem{remark}[theorem]{Remark}
\newtheorem*{acknow}{Acknowledgments}
\def\cal L{{\mathcal L}}
\newcommand{\cercle}[1]{\ensuremath{\setlength{\unitlength}{1ex}\begin{picture}(2.8,1.5)\put(1.4,0.7){\circle{2.8}\makebox(-5.6,0){#1}}\end{picture}}}
\def\beq{\begin{equation}}
\def\eeq{\end{equation}}
\def\bea{\begin{align}}
\def\eea{\end{align}}
\title{
$m$-Symmetric functions, non-symmetric Macdonald polynomials and positivity conjectures}
\begin{document}

\author{L. Lapointe}
\address{Instituto de Matem\'aticas, Universidad de
Talca, 2 norte 685, Talca, Chile.}
\email{llapointe@utalca.cl }

\begin{abstract}
  We study the space, $R_m$, of $m$-symmetric functions consisting of polynomials that are symmetric in the variables $x_{m+1},x_{m+2},x_{m+3},\dots$ but have no special symmetry in the variables $x_1,\dots,x_m$.  We obtain $m$-symmetric Macdonald polynomials by $t$-symmetrizing non-symmetric Macdonald polynomials, and show that they form a basis of $R_m$.  
  We define $m$-symmetric Schur functions through a somewhat complicated process involving their dual basis, tableaux combinatorics, and the Hecke algebra generators, and then prove some of their most elementary properties.  We conjecture that the $m$-symmetric Macdonald polynomials (suitably normalized and plethystically modified) expand positively in terms of $m$-symmetric Schur functions. We obtain  relations on the $(q,t)$-Koska coefficients $K_{\Omega \Lambda}(q,t)$ in the $m$-symmetric world, and show in particular that the usual $(q,t)$-Koska coefficients are special cases of the  $K_{\Omega \Lambda}(q,t)$'s.  Finally, we show that when $m$ is large, the positivity conjecture, modulo a certain subspace, becomes a positivity conjecture on the expansion of non-symmetric Macdonald polynomials in terms of non-symmetric Hall-Littlewood polynomials.  
\end{abstract}

\keywords{Non-symmetric Macdonald polynomials, Hecke algebra, symmetric functions. 2020 Mathematics Subject Classification: 05E05}

\thanks{Funding: this work was supported by
the Fondo Nacional de Desarrollo Cient\'{\i}fico y Tecnol\'ogico de Chile (FONDECYT) Regular Grant \#1210688.}

\maketitle

\section{Introduction}	

Macdonald polynomials are a family of symmetric functions depending on two parameters $q$ and $t$ \cite{M}. It was shown in \cite{GH,Haiman} that the Schur expansion of the Macdonald polynomials (suitably normalized and plethystically modified) is positive, that is, that 
$$
\tilde J_\lambda(x;q,t) = \sum_\mu K_{\mu \lambda}(q,t) \, s_\mu (x) \qquad {\rm with~} K_{\mu \lambda}(q,t) \in \mathbb N[q,t]
$$
where $\tilde J_\lambda(x;q,t)$ stands for the plethystically modified Macdonald polynomial indexed by the partition $\lambda$.

There exists a non-symmetric version $E_\eta(x;q,t)$ of the Macdonald polynomials \cite{Che,Mac2} which are indexed by compositions and form a basis of the ring of polynomials in a given number  $N$ of   variables.
Even though  many combinatorial properties of the non-symmetric Macdonald polynomials are similar in nature to those of the Macdonald polynomials,
there was until now no known Macdonald positivity phenomenon in the non-symmetric case (except in special cases  \cite{AS,A1}).
One of the main goals of this article is to develop the correct framework in which to extend the original Macdonald positivity conjectures to the non-symmetric case.  In order to describe this framework, which is that of the $m$-symmetric functions,  we first  need to make a detour through superspace.

Generalizations to superspace of the original Macdonald positivity conjecture were provided in \cite{BDLM,BDLM2}.  In this setting, the Macdonald polynomials, denoted $J_\Lambda^{({\rm SUSY})}(q,t)$ and indexed by superpartitions, essentially correspond to non-symmetric Macdonald polynomials whose variables $x_1,\dots,x_m$ (resp. $x_{m+1},\dots,x_N$) are antisymmetrized (resp. $t$-symmetrized), with $m$ being the fermionic sector. In short,
\begin{equation} \label{eqanti}
  J_\Lambda^{({\rm SUSY})}(q,t) \longleftrightarrow \mathcal A_{1,\dots, m} \mathcal S^t_{m+1, N} E_\eta(x;q,t)
  \end{equation}
where $E_\eta(x;q,t)$ is a non-symmetric Macdonald polynomial, and where 
$\mathcal A_{1,\dots, m}$ (resp. $\mathcal S^t_{m+1, N}$) stands for the antisymmetrization (resp. $t$-symmetrization) operator.  The extension of the
 original Macdonald positivity conjecture is then that
\begin{equation} \label{susypos}
\tilde J_\Lambda^{({\rm SUSY})}(q,t) = \sum_\Omega K_{\Omega \Lambda}(q,t) \, s_\Omega^{({\rm SUSY})}  \qquad {\rm with~} K_{\Omega \Lambda}(q,t) \in \mathbb N[q,t]
\end{equation}
where $s_\Omega^{({\rm SUSY})}=J_\Lambda^{({\rm SUSY})}(0,0)$, and where
$\tilde J_\Lambda^{({\rm SUSY})}(q,t)$ is a plethystically modified version of $J_\Lambda^{({\rm SUSY})}(q,t)$.

For quite a while we have surmised that applying the antisymmetrization operator $\mathcal A_{1,\dots, m}$ in \eqref{eqanti} was not necessary, and that the positivity conjectures could be extended to a much larger world.
In this world, which we call the world of $m$-symmetric functions, the elements are symmetric in the variables $x_{m+1},x_{m+2},\dots$ while they have no special symmetry in the variables $x_1,\dots,x_m$  (the case $m=0$ corresponding to the usual symmetric functions).  To be more precise, we have suspected that, defining the $m$-symmetric Macdonald polynomials (up to a constant) as
\begin{equation} \label{eqpos}
  J_\Lambda(x;q,t) \propto \mathcal S^t_{m+1, N} E_\eta(x;q,t)
\end{equation}
there existed a natural basis of $m$-symmetric Schur functions $s_\Omega(x;t)$ (now depending on $t$) such that
\begin{equation} \label{Macdopos}
  \tilde J_\Lambda(x;q,t) = \sum_\Omega K_{\Omega \Lambda}(q,t) \, s_\Omega(x;t)
 \qquad {\rm with~} K_{\Omega \Lambda}(q,t) \in \mathbb N[q,t]
\end{equation}
with $ \tilde J_\Lambda(x;q,t)$ a plethystically modified version of $J_\Lambda(x;q,t)$. Our main problem  has been to obtain a workable definition of the $m$-symmetric Schur functions $s_\Omega(x;t)$, that is, a definition allowing to demonstrate 
their elementary properties as well as providing a precise statement for the desired positivity conjectures for the non-symmetric Macdonald polynomials\footnote{Unfortunately, the construction of the $m$-symmetric Schur functions relying extensively on tableau combinatorics  in the first version of this article  (arXiv:2206.05177v1)  was not the correct one as it yielded  counter-examples to the  positivity conjecture for the $m$-symmetric Macdonald polynomials (the smallest such instance occurring at degree 7 for $m=3$).}.
 Before  describing the main ingredients of our solution to this problem,  we should note that the $m$-symmetric Macdonald polynomials have also been considered in \cite{BG,BG2}, where they are called {\it partially-symmetric Macdonald polynomials}. Remarkably, it has been shown recently \cite{OW} that the partially-symmetric/$m$-symmetric Macdonald polynomials are in correspondence with certain $(\mathbb C^*)^2$-fixed point classes of the parabolic flag Hilbert schemes \cite{GM}, a result that generalizes the correspondence between Macdonald polynomials and  $(\mathbb C^*)^2$-fixed points of the Hilbert schemes \cite{Haiman}. 

Bases of the ring $R_m$ of $m$-symmetric functions are indexed by $m$-partitions
$\Lambda=(\pmb a;\lambda)$ where $\pmb a =(a_1,\dots,a_m) \in \mathbb Z_{\geq 0}^m$ is a (weak) composition and where $\lambda$ is a partition.
A natural basis of $R_m$ is provided by
$$
k_\Lambda(x;t)= H_{\pmb a}(x_1,\dots,x_m;t) s_\lambda(x)
$$
where   $H_{\pmb a}(x_1,\dots,x_m;t)$ is a non-symmetric Hall-Littlewood polynomial and where $s_\lambda(x)$ is a Schur function.
Let $\langle \cdot, \cdot \rangle_m$ be the unique
bilinear scalar product on $R_m$ such that
$$
\langle k_\Lambda(x;t), k_\Omega(x;t) \rangle_m = \delta_{\Lambda \Omega}\, t^{{\rm Inv}(\pmb a)} 
$$
where ${\rm Inv}(\pmb a)$ is the number of inversions in $\pmb a$. 
The reason to introduce this scalar product is that it proves more 
convenient to first define the dual $m$-symmetric Schur functions 
$s_\Lambda^*(x;t)$. In the dominant case ($\Lambda=(\pmb a;\lambda)$ with  $a_1 \geq a_2 \geq \cdots \geq a_{m}$),  $s_\Lambda^*(x;t)$ is simply a multi-Schur function (see Definition~\ref{defdualS}), while in the non-dominant case they can be constructed recursively using the Hecke algebra generators:
$$
T_i s^*_\Lambda(x;t) =  s^*_{\tilde \Lambda}(x;t)   \quad {\rm if~}a_i > a_{i+1}
$$
where $\tilde \Lambda=(\tilde {\pmb a};\lambda)$ with $\tilde{\pmb a}$ obtained from $\pmb a$ by interchanging $a_i$ and $a_{i+1}$.
 Their dual basis (up to a power of $t$) with respect to the scalar product $\langle \cdot, \cdot \rangle_m$, the sought-after  $m$-symmetric Schur functions $s_{\Lambda}(x;t)$ appearing in \eqref{Macdopos}, are then the 
the unique basis of $R_m$ such that
$$
\langle s_\Lambda(x;t), s^*_\Omega(x;t) \rangle_m = \delta_{\Lambda \Omega}\, t^{{\rm Inv}(\pmb a)} 
$$
This definition, even though not the most direct, will prove quite fruitful.
We have for instance that the Hecke algebra generator $T_i$ is again such that
\begin{equation} \label{relHecke}
T_i s_\Lambda(x;t) =  s_{\tilde \Lambda}(x;t)   \quad {\rm if~}a_i > a_{i+1}
\end{equation}
As such, 
it is possible to construct all $m$-symmetric Schur functions from those indexed by dominant $m$-partitions (unfortunately, we have not been able to obtain an 
explicit characterization of $s_\Lambda(x;t)$ for a dominant $\Lambda$. This was the main motivation behind the introduction of the dual $m$-symmetric Schur functions which, as we have seen, have a simple characterization as multi-Schur functions when $\Lambda$ is dominant).
We can also show that non-symmetric Hall-Littlewood polynomials are a sub-family of the  $m$-symmetric Schur functions:
$$
s_{(\pmb a;\emptyset)}(x;t) = H_{\pmb a}(x_1,\dots,x_m;t)
$$

Since $R_0 \subseteq R_1 \subseteq R_2 \subseteq \cdots$, it proves natural to consider the inclusion $i:R_m \to R_{m+1}$ and the restriction $r:R_{m+1} \to R_m$ (which essentially consists in setting $x_{m+1}=0$).
We obtain elegant formulas for the inclusion and restriction of 
the (dual) $m$-symmetric Schur functions, as well as  for the inclusion and restriction of the $m$-symmetric Macdonald polynomials (these are probably the most technical results contained in this article).  These formulas allow, among other things, to derive simple properties of the coefficients $K_{\Omega \Lambda}(q,t)$ in
\eqref{Macdopos} (see Theorem~\ref{propokostka} and Proposition~\ref{propoKostka1}), which imply in particular that $K_{\Omega \Lambda }(q,t)=K_{\mu \lambda}(q,t)$ whenever
$\Omega=(0^m;\mu)$ and $\Lambda=(0^m;\lambda)$, that is, that the usual $(q,t)$-Kostka coefficients are special cases of the coefficients $K_{\Omega \Lambda}(q,t)$.

Every non-symmetric Macdonald polynomial turns out to be an $m$-symmetric Macdonald polynomial when $m$ is large enough (when $m$ is as large as the length of the indexing composition to be more precise). The positivity conjecture expressed in \eqref{Macdopos} thus contains as a special case a generalization of the original Macdonald positivity conjecture for non-symmetric Macdonald polynomials.  But a more remarkable phenomenon is at play. For any compositions $\eta$ and $\omega$, we can define $K_{\omega \eta} (q,t)$ without any reference to $m$. We can also show that for a given $\eta$, there is only a finite number of distinct coefficients $K_{\omega \eta} (q,t)$,  and that, as long as $m$ is large enough, there is no loss of information in working modulo
the linear span $\mathcal L_m$ of $m$-symmetric symmetric Schur functions $s_{\Lambda}(x;t)$ such that $\lambda \neq \emptyset$
in $\Lambda=(\pmb a;\lambda)$. The positivity conjecture \eqref{Macdopos} thus implies the following positivity conjecture for non-symmetric Macdonald polynomials (properly normalized and plethystically modified) in terms of non-symmetric Hall-Littlewood polynomials
$$
\tilde J_\eta(x;q,t) = \sum_{\omega \in \mathbb Z_{\geq 0}^m} K_{\omega \eta}(q,t) H_{\omega}(x_1,\dots,x_m;t)  \mod \mathcal L_m, \qquad {\rm with~} K_{\omega \eta}(q,t) \in \mathbb N[q,t]
$$
since, as we mentioned previously, $s_{(\omega;\emptyset)}(x;t)=H_{\omega}(x_1,\dots,x_m;t)$.

When $q=t=1$, the usual $(q,t)$-Kostka coefficient $K_{\mu \lambda}(q,t)$ is equal to the number of standard tableaux of shape $\mu$ (in representation-theoretic terms, this corresponds to 
the dimension of the irreducible representation of the symmetric group indexed by the partition $\mu$).  We will show
in a forthcoming article \cite{CL} that this property extends to the $m$-symmetric world.  To be more precise, we will show that
when $q=t=1$, $K_{\Omega \Lambda}(q,t)$ is equal to the number of standard tableaux of shape $\pmb b \cup \mu$, where $\pmb b \cup \mu$ is the partition obtained by reordering the entries of the concatenation of $\pmb b$ and $\mu$, where 
$\Omega=(\pmb b;\mu)$ (in the case of compositions, $K_{\omega \eta}(1,1)$ is the number of standard tableaux of shape $\omega^+$, where $\omega^+$ is the partition obtained by reordering the entries of $\omega$).  This suggests that
the positivity conjectures in the $m$-symmetric case
could still
be  connected to the representation theory of the symmetric group.

Our goal in introducing the positivity conjecture \eqref{Macdopos} was to
define a much larger framework in which the extra structure could potentially lead to a combinatorial interpretation for the $(q,t)$-Kostka coefficients.
This extra structure seems for instance to include special recursions and Butler-type rules for the coefficients $K_{\Omega \Lambda}(q,t)$ (see Conjectures~\ref{conjecf1}, \ref{conjecf2} and \ref{conjecf3}). We are thus still hopeful that this much richer framework holds the key to unraveling the mysterious combinatorics of the $(q,t)$-Kostka coefficients.

The article, being already quite long,
does not contain the $t=1$ case (to be treated in \cite{CL}), or
the deeper properties of the $m$-symmetric Macdonald polynomials (such as the orthogonality with respect to a natural scalar product, the formulas for the squared-norm and the evaluation, etc.), which are studied in \cite{CL2}.
We have instead focused on establishing properties of the $K_{\Omega \Lambda}(q,t)$ coefficients, and, as such, have restricted ourselves to only presenting results that were necessary to establish such properties.

The outline of the article is the following.  In Section~\ref{secnonsym}, we
give the necessary background on double affine Hecke algebras and 
non-symmetric Macdonald polynomials. It is interesting to note that we use a non-standard diagrammatic representation of compositions that, apart from providing a more elegant expression for the eigenvalues of the Cherednik operators, 
will prove useful when introducing the integral form of the non-symmetric Macdonald polynomials. In Section~\ref{sect2}, we introduce the ring of $m$-symmetric functions. Generalizations of elementary concepts in symmetric function theory such as that of partition, Ferrers' diagram, and dominance order are presented.
Various extensions of simple bases of the ring of symmetric functions, such as that of monomials and power-sums, are also given.  The $m$-symmetric Macdonald polynomials are defined in Section~\ref{secmsymMacod}. This somewhat technical section culminates in a proof (using the action of certain eigenoperators involving Cherednik operators on monomials) that the $m$-symmetric Macdonald polynomials  are unitriangularly related to the $m$-symmetric monomial  basis, and thus form a basis of the ring of $m$-symmetric functions.  The (dual) $m$-symmetric Schur functions can finally be presented in Section~\ref{secdualschur}.
The dual  $m$-symmetric Schur functions are first introduced using multi-Schur functions and the Hecke algebra generators. After showing that the dual  $m$-symmetric Schur functions form a basis of the ring of $m$-symmetric functions, the $m$-symmetric Schur function can be defined as their dual with respect to a certain scalar product. 
Properties of the (dual) $m$-symmetric Schur functions are then derived in Section~\ref{secProperties} such as the action of the Hecke algebra generators on the (dual) $m$-symmetric Schur functions.  Simple formulas for the  inclusion and restriction of the  (dual) $m$-symmetric Schur functions are also established. Our main conjecture, a positivity conjecture for $m$-symmetric Macdonald polynomials is featured in Section~\ref{secMain} after a notion of plethysm and the right normalization have been defined.  Various relations on the Kostka coefficients $K_{\Omega \Lambda}(q,t)$ are later given in Section~\ref{secKostkaprop}. These are the hardest results contained in this article (apart from actually finding the right definition of the $m$-symmetric Schur functions), as  
some of the relations depend on very technical results on $m$-symmetric Macdonald polynomials that are relegated to Appendix~\ref{secAppendix} in order not to interrupt the flow of the presentation.  The case when $m$ is large, in which case the $m$-symmetric Macdonald polynomials reduce to non-symmetric Macdonald polynomials,  is studied in Section~\ref{seclargem}.  It is shown that the positivity conjecture on $m$-symmetric Macdonald polynomials, modulo a subspace spanned by certain $m$-symmetric Schur functions, now expresses how a non-symmetric Macdonald polynomial expands into  non-symmetric Hall-Littlewood polynomials. Finally, Section~\ref{secButler} contains conjectures on the coefficients $K_{\Omega \Lambda}(q,t)$, including two rules reminiscent of  Butler's rule on the $(q,t)$-Kostka coefficients.

\section{Non-symmetric Macdonald polynomials}  \label{secnonsym}

The non-symmetric Macdonald polynomials can be defined as the common eigenfunctions of the Cherednik operators \cite{Che}, which are operators that belong to the double affine Hecke algebra and act on the ring $\mathbb Q(q,t)[x_1,\dots,x_N]$.  We now give the relevant definitions \cite{Mac2,Mar}.  
Let the exchange operator $K_{i,j}$ be such that
$$K_{i,j} f(\dots, x_i,\dots,x_j,\dots)= f(\dots, x_j,\dots,x_i,\dots)$$
We then define the generators $T_i$ of the affine Hecke algebra as
\begin{equation} \label{eqTi}
T_i=t+\frac{tx_i-x_{i+1}}{x_i-x_{i+1}}(K_{i,i+1}-1),\quad i=1,\ldots,N-1,
\end{equation}
and
$$
 {T_0=t+\frac{qtx_N-x_1}{qx_N-x_1}(K_{1,N}\tau_1\tau_N^{-1}-1)}\, ,
$$
where $\tau_i f(x_1,\dots, x_i,\dots,x_N)= f(x_1,\dots, qx_i,\dots,x_N)$ is the $q$-shift operator.
The $T_i$'s satisfy the relations  {($0\leq i\leq N-1$)}:
\begin{align*} &(T_i-t)(T_i+1)=0\nonumber\\
&T_iT_{i+1}T_i=T_{i+1}T_iT_{i+1}\nonumber\\
&T_iT_j=T_jT_i \, ,\quad i-j \neq \pm 1 \mod N
\end{align*}
where the indices are taken modulo $N$.
To define the Cherednik operators, we also need to introduce
the operator $\omega$ defined as:  
$$
\omega=K_{N-1,N}\cdots K_{1,2} \, \tau_1.
$$
We note that $\omega T_i=T_{i-1}\omega$ for $i=2,\dots,N-1$.

We are now in position to define the Cherednik operators:
$$
Y_i=t^{-N+i}T_i\cdots T_{N-1}\omega T_1^{-1}\cdots T_{i-1}^{-1},
$$
where 
$$
 T_j^{-1}=t^{-1}-1+t^{-1}T_j,
$$
 which follows from the quadratic relation satisfied by the generators
 of the  Hecke algebra.
 The Cherednik operators obey the following  relations:
 \begin{align} \label{tsym1}
T_i \, Y_i&= Y_{i+1}T_i+(t-1)Y_i\nonumber \\
T_i \, Y_{i+1}&= Y_{i}T_i-(t-1)Y_i\nonumber \\
T_i Y_j & = Y_j T_i \quad {\rm if~} j\neq i,i+1.
\end{align}
It can be easily deduced from these relations that
\begin{equation}\label{TYi}
(Y_i+Y_{i+1})T_i= T_i (Y_i+Y_{i+1}) \qquad {\rm and } \qquad (Y_i Y_{i+1}) T_i =
T_i (Y_i Y_{i+1}). 
\end{equation}

An element  $\eta=(\eta_1,\dots,\eta_N)$ of $\mathbb Z_{\geq 0}^{N}$ is called a (weak) composition with $N$ 
parts (or entries).
It will prove convenient to represent a composition by a Young (or Ferrers) diagram.  Given a composition $\eta$ with $N$ parts, let $\eta^+$ be the partition obtained by reordering the entries of $\eta$.  The diagram corresponding to $\eta$ is the Young diagram of $\eta^+$ with an $i$-circle (a circle filled with an $i$) added to the right of the row of size $\eta_i$ (if there are many rows of size $\eta_i$, the circles are ordered from top to bottom in increasing order).  For instance, given $\eta=(0,2,1,3,2,0,2,0,0)$, we have
 $$
\eta \quad \longleftrightarrow  \quad {\tableau[scY]{&& & \bl \cercle{4}\\& & \bl \cercle{2} \\& & \bl \cercle{5}\\ & & \bl \cercle{7} \\ & \bl \cercle{3} \\ \bl \cercle{1} \\ \bl \cercle{6} \\ \bl \cercle{8} \\ \bl \cercle{9}}}
 $$

The Cherednik operators $Y_i$'s commute with each other, $[Y_i,Y_j]=0$,
and can be simultaneously diagonalized. Their eigenfunctions are the
 (monic) non-symmetric Macdonald polynomials (labeled by compositions).
For $x=(x_1,\dots,x_N)$, 
the non-symmetric Macdonald polynomial $E_\eta(x;q,t)$ is 
the 
unique polynomial with rational coefficients in $q$ and $t$ 
that is triangularly related to the monomials
\begin{equation} \label{orderE}
E_\eta(x_1,\dots,x_N;q,t)=x^\eta+\sum_{\nu\prec\eta}b_{\eta\nu}(q,t) \, x^\nu
\end{equation}
and that satisfies, for all $i=1,\dots,N$, 
\begin{equation} 
  Y_i E_\eta=\bar \eta_iE_\eta,\qquad\text{where}\qquad  \bar\eta_i =q^{\eta_i}t^{1-r_\eta(i)} \label{eigenvalY}
\end{equation}
  with $r_\eta(i)$ standing for the row (starting from the top) in which the $i$-circle appears in
  the diagram of $\eta$.
 The order on compositions is defined as follows:
 $$
   \nu\prec\eta\quad \text{ {iff}}\quad \nu^+<\eta^+\quad \text{or} \quad \nu^+=\eta^+\quad \text{and}\quad w_\eta < w_\nu,
 $$
 where $w_{\eta}$ is the unique permutation of minimal length such 
 that $\eta = w_{\eta} \eta^+$ ($w_{\eta}$ permutes the entries of $\eta^+$), and where the order on permutations is the Bruhat order on the symmetric group ($w_\eta {<} w_\nu$ iff
$w_{\eta}$ has a reduced decomposition which is a  {proper} subword  of a reduced decomposition of $w_{\nu}$).  The Cherednik operators have a triangular action on monomials \cite{Mac1}, that is,
\begin{equation} \label{triangY}
Y_i x^\eta =   \bar \eta_i x^\eta + {\rm ~smaller~terms}
\end{equation}
where ``smaller terms'' means that the remaining monomials $x^\nu$  appearing in the expansion are such that $\nu \prec \eta$.
  
The following three properties of the non-symmetric Macdonald polynomials will be needed below.
The first one expresses the stability of the polynomials $E_\eta$ with respect to the number of variables  (see e.g. \cite[eq. (3.2)]{Mar}):
\begin{equation} \label{property1}
E_\eta (x_1,\dots,x_{N-1},0;q,t) =
\left \{ 
\begin{array}{ll}
E_{\eta_-} (x_1,\dots,x_{N-1};q,t)
& {\rm if~} 
\eta_N = 0\, , \\
0 & {\rm if~} \eta_N \neq 0\, .
\end{array} \right.
 \end{equation}
where $\eta_-=(\eta_1,\ldots, \eta_{N-1}) $.   
The second one gives the action of the operators $T_i$ on $E_\eta$.   It is a formula that will be fundamental for our purposes \cite{BF}: 
\begin{equation} \label{property2}
T_i E_{\eta} = \left\{ 
\begin{array}{ll}
\left(\frac{t-1}{1-\delta_{i,\eta}^{-1}}\right) E_\eta + t E_{s_i \eta} & {\rm if~} 
\eta_i < \eta_{i+1} \, ,  \\
t E_{\eta} &  {\rm if~} 
\eta_i = \eta_{i+1} \, ,\\
\left(\frac{t-1}{1-\delta_{i,\eta}^{-1}}\right) E_\eta + \frac{(1-t{\delta_{i,\eta}})(1-t^{-1}\delta_{i,\eta})}{(1-{\delta_{i,\eta}})^2} E_{s_i \eta} & {\rm if~} 
\eta_i > \eta_{i+1} \, ,
\end{array} \right. 
\end{equation}
$\delta_{i,\eta}=\bar \eta_i/\bar \eta_{i+1}$ {and} $s_i \eta=(\eta_1,\dots,\eta_{i-1},\eta_{i+1},\eta_i,\eta_{i+2},\dots,\eta_N)$.
The third property, together with the previous one,  allows one to construct the non-symmetric Macdonald polynomials recursively.  Given 
$\Phi_q=t^{1-N}T_{N-1}\cdots T_1 x_1$, we have that \cite{BF}
\begin{equation} \label{eqPhi}
 \Phi_q E_\eta(x;q,t) = t^{r_\eta(1)-N}  E_{\Phi \eta}(x;q,t)
\end{equation}
where $\Phi \eta=(\eta_2,\eta_3,\dots,\eta_{N-1},\eta_1+1)$.

Finally, we introduce the $t$-symmetrization operator:
\begin{equation} \label{symop}
\mathcal S^t_{m+1,N}=\sum_{\sigma\in S_{N-m}}T_\sigma
\end{equation}
where the sum is over the permutations in the symmetric group $S_{N-m}$
and
where
$T_\sigma=T_{i_1+m}\cdots T_{i_\ell+m}$  if $\sigma=s_{i_1}\cdots s_{i_\ell}$ is a reduced expression.
    We stress that there is a shift by $m$ in the indices of the Hecke algebra generators. Essentially, the $t$-symmetrization operator $\mathcal S^t_{m+1,N}$ acts on the variables $x_{m+1},x_{m+2},\dots, x_N$ while the usual $t$-symmetrization operator $\mathcal S^t_{1,N}$ acts on all variables $x_{1},x_{2},\dots, x_N$.
    \begin{remark}  \label{remarksym}
      We have by  \eqref{eqTi} that if any polynomial $f(x_1,\dots,x_N)$ is such that  $T_i f(x_1,\dots,x_N)=t  f(x_1,\dots,x_N)$, then $f(x_1,\dots,x_N)$ is symmetric in the variables $x_i$ and $x_{i+1}$.  As such, from \eqref{property2}, $E_\eta(x_1,\dots,x_N)$ is symmetric in the variables $x_i$ and $x_{i+1}$ whenever $\eta_i=\eta_{i+1}$.  We also have that $ \mathcal S^t_{m+1,N} f(x_1,\dots,x_N)$ is symmetric in the variables  $x_{m+1},x_{m+2},\dots, x_N$ for any polynomial $f(x_1,\dots,x_N)$ given that it can easily be checked that
\begin{equation}   \label{TionSymt}
T_i \, \mathcal S^t_{m+1,N} = t \, \mathcal S^t_{m+1,N} \qquad {\rm for~}i=m+1,\dots,N-1
\end{equation}
\end{remark}
The $t$-symmetrization operator satisfies the following well-known useful relations (which can be deduced from the theory of minimal coset representatives)
\begin{equation} \label{extraN}
  (1+T_N+ T_{N-1}T_N + \cdots + T_{m+1} \cdots T_{N-1}T_N )  \, \mathcal S^t_{m+1,N} =  \mathcal S^t_{m+1,N+1}
\end{equation}
and
\begin{equation} \label{extraN2}
  (1+T_m+ T_{m+1}T_m + \cdots + T_{N-1} \cdots T_{m+1} T_m )  \, \mathcal S^t_{m+1,N} =  \mathcal S^t_{m,N}
\end{equation}
 We should note that, up to a constant, the usual Macdonald polynomial, $P_\lambda(x_1,\dots,x_N;q,t)$, is obtained by $t$-symmetrizing a non-symmetric Macdonald polynomial:
$$
P_\lambda(x;q,t) \propto \mathcal S_{1,N}^{t} \, E_\eta(x;q,t)
$$
where $\eta$ is any composition that rearranges to $\lambda$.

\section{The ring of $m$-symmetric functions} \label{sect2}

Let $\mathbf \Lambda=\mathbb Q(q,t)[h_1,h_2,h_3,\dots]$ be the ring of symmetric functions in the variables $x_1,x_2,x_3,\dots$ (the standard references on symmetric functions are \cite{M,Stan}), where
$$
h_r=h_r(x_1,x_2,x_3,\dots) = \sum_{i_1 \leq i_2 \leq \cdots \leq i_r} x_{i_1} x_{i_2} \cdots x_{i_r}
$$
 Bases of $\mathbf \Lambda$ are 
indexed by partitions $\lambda=(\lambda_1\geq\dots\geq\lambda_k>0)$ 
whose degree $\lambda$ is $|\lambda|=\lambda_1 +\cdots +\lambda_k$
and whose length $\ell(\lambda)=k$.  Each partition $\lambda$ has an 
associated Young diagram with $\lambda_i$ lattice squares in the $i^{th}$ 
row, from top to  bottom (English notation).  Any lattice square $(i,j)$ 
in the $i$th row and $j$th column of a Young diagram is called a cell.
The conjugate of $\lambda$, denoted $\lambda'$, is the reflection of
$\lambda$ about the main diagonal.  
 The partition
$\lambda\cup\mu$ is the non-decreasing rearrangement of the parts 
 of $\lambda$ and $\mu$.  If the partition $\mu$ is contained in the partition $\lambda$, then the skew diagram $\lambda/\mu$ is the diagram obtained by removing the diagram corresponding to $\mu$ from the diagram of $\lambda$.
 The dominance order $\geq$ is defined on partitions by
$\lambda\geq \mu$ when $\lambda_1+\cdots+\lambda_i\geq
\mu_1+\cdots+\mu_i$ for all $i$, and $|\lambda|=|\mu|$.

The ring $\mathbf \Lambda$ can be thought as a subring of the ring of formal power series $\mathbb Q(q,t)[[x_1,x_2,x_3,\dots]]$ since it consists of the
elements of  $\mathbb Q(q,t)[[x_1,x_2,x_3,\dots]]$ that are symmetric in the variables $x_{1},x_{2},x_{3},\dots$ and have bounded degree. In this spirit, we will define
the ring $R_m$ of $m$-symmetric functions as the subring of $\mathbb Q(q,t)[[x_1,x_2,x_3,\dots]]$ 
made  of formal power series that are symmetric  in the variables $x_{m+1},x_{m+2},x_{m+3},\dots$ and have bounded degree.
In other words, we have
$$R_m \simeq \mathbb Q(q,t)[x_1,\dots,x_m] \otimes \mathbf \Lambda_m$$
where
$\mathbf \Lambda_m$ is the ring of symmetric functions in the variables $x_{m+1},x_{m+2},x_{m+3},\dots$.  The ring $R_m$ is graded with respect to the total degree in $x$
$$
R_m = R_m^0 \oplus  R_m^1 \oplus  R_m^2 \oplus \cdots 
$$
where $R_m^d$ is the subspace of $R_m$ made of formal power series of homogeneous degree $d$. It is  immediate that $R_0=\mathbf \Lambda$ is the usual ring of symmetric functions and that $R_0 \subseteq R_1 \subseteq R_2 \subseteq \cdots $.  Bases of $R_m$ are naturally indexed by $m$-partitions which are pairs $\Lambda=(\pmb a;\lambda)$, where 
$\pmb a= (a_1,\dots,a_m) \in \mathbb Z_{\geq 0}^m$ is a composition with $m$ parts, and where
$\lambda$ is a partition.  We will call the entries of $\pmb a$ and $\lambda$ the
non-symmetric and symmetric entries of $\Lambda$ respectively.
In the following, unless stated otherwise, $\Lambda$ and $\Omega$ will always stand respectively for the $m$-partitions $\Lambda=(\pmb a;\lambda)$ and $\Omega=(\pmb b;\mu)$. Observe that we use a different notation for  the composition $\pmb a$ with $m$ parts (which corresponds to the non-symmetric entries of 
of $\Lambda$)
than for the composition $\eta$ with $N$ parts (which will typically index a non-symmetric Macdonald polynomial).

Given a composition $\pmb a$ and a partition $\lambda$, $\pmb a \cup \lambda$ will denote the partition obtained by reordering the entries of the concatenation of $\pmb a$ and $\lambda$.  The degree of an $m$-partition $\Lambda$, denoted $|\Lambda|$, is the sum of the degrees of $\pmb a$ and $\lambda$, that is, 
$|\Lambda|=a_1+\dots +a_m+\lambda_1+\lambda_2+\cdots$. We also define the 
length of $\Lambda$ as $\ell(\Lambda)=m+\ell(\lambda)$. We will say that $\pmb a$ is dominant if $a_1 \geq a_2 \geq \cdots \geq a_m$, and by extension, we will say that $\Lambda=(\pmb a; \lambda)$ is dominant if $\pmb a$ is dominant.  If $\pmb a$ is not dominant, we let $\pmb a^+$ be the dominant composition obtained by reordering the entries of $\pmb a$.  We also let $\Lambda^+=(\pmb a^+;\lambda)$.

There is a natural way to represent an $m$-partition by a Young diagram. 
The diagram corresponding to $\Lambda$ is the Young diagram of $\pmb a \cup \lambda$ with an $i$-circle  added to the right of the row of size $a_i$ for $i=1,\dots,m$ (if there are many rows of size $a_i$, the circles are ordered from top to bottom in increasing order).  For instance, given $\Lambda=(2,0,2,1; 3,2 )$, we have
 $$
\Lambda \quad \longleftrightarrow  \quad {\tableau[scY]{&& & \bl \\& & \bl \cercle{1} \\& & \bl \cercle{3}\\ & & \bl  \\ & \bl \cercle{4} \\ \bl \cercle{2} }}
$$
Observe that when $m=0$, the diagram associated to  $\Lambda=( ; \lambda)$ coincides with the Young diagram associated to $\lambda$.
Also note that if $\eta$ is a composition with $m$ parts, then the diagram of $\eta$ coincides with the diagram of the $m$-partition $\Lambda=(\pmb a;\emptyset)$, where $\pmb a=\eta$.  We let $\Lambda^{(0)}=\pmb a \cup \lambda$, that is, $\Lambda^{(0)}$ is the partition 
obtained from the diagram of $\Lambda$ by discarding all the circles.
More generally, for $i=1,\dots,m$, we let $\Lambda^{(i)}=(\pmb a+1^i) \cup \lambda$, where $\pmb a +1^i=(a_1+1,\dots,a_i+1,a_{i+1},\dots,a_m)$. In other words, 
$\Lambda^{(i)}$ is the partition obtained from the diagram associated to $\Lambda$ by changing all of the $j$-circles, for $1 \leq j \leq i$, into squares and discarding the remaining circles.  Taking as above
$\Lambda=(2,0,2,1; 3,2)$, we have
$\Lambda^{(0)}=(3,2,2,2,1)$, $\Lambda^{(1)}=(3,3,2,2,1)$, $\Lambda^{(2)}=(3,3,2,2,1,1)$, $\Lambda^{(3)}=(3,3,3,2,1,1)$ and $\Lambda^{(4)}=(3,3,3,2,2,1)$. 
We then define the
dominance ordering on $m$-partitions to be  such that
\begin{equation} \label{deforder}
\Lambda \geq \Omega \iff \Lambda^{(i)} \geq \Omega^{(i)} \qquad {\rm for~all~}i=0,\dots,m
\end{equation}
where the order on the r.h.s.
is the usual dominance order on partitions.
Note that the meaning of dominance order will become apparent later (see Propositions~\ref{propo4} and \ref{propounitriang}). 

We let the $m$-symmetric monomial function $m_\Lambda(x)$ be defined as
$$
m_\Lambda(x) := x_1^{a_1} \cdots x_m^{a_m} \, m_\lambda (x_{m+1},x_{m+2},\dots)= x^{\pmb a}  \, m_\lambda (x_{m+1},x_{m+2},\dots)
$$
where $m_\lambda (x_{m+1},x_{m+2},\dots)$ is the usual monomial symmetric function
 in the variables $x_{m+1},x_{m+2},\dots$
$$
m_\lambda (x_{m+1},x_{m+2},\dots)= \sum_\alpha x_{m+1}^{\alpha_1} x_{m+2}^{\alpha_2} \cdots   
$$
with the sum being over all  derangements $\alpha$ of $(\lambda_1,\lambda_2,\dots,\lambda_{\ell(\lambda)},0,0,\dots)$. We emphasize that unless stated otherwise, $x$ will always stand for the variables $(x_1,x_2,\dots)$ so that
$m_\Lambda(x)=m_\Lambda(x_1,x_2,\dots)$.
It is immediate that $\{ m_\Lambda(x) \}_\Lambda$ is a basis of $R_m$, and more specifically, that
$$
\{ m_\Lambda(x) \, | \, d=|\Lambda|\}
$$
is a basis of $R_m^d$.
The following stronger statement holds.
\begin{proposition} \label{propom}
  For $\ell=1,\dots,m+1$, we have that
$
\{  x^{\pmb a}\,  m_\lambda (x_{\ell},x_{\ell+1},\dots)\}_{\pmb a,\lambda}
$
is a basis of $R_m$.  In particular, for $x=(x_1,x_2,x_3,\dots)$, we have that
$
\{  x^{\pmb a} \, m_\lambda (x)  \}_{\pmb a,\lambda}
$
is a basis of $R_m$. 
\end{proposition}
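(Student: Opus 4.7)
The case $\ell=m+1$ is exactly the $m$-symmetric monomial basis $\{m_\Lambda\}$, which has already been observed to be a basis of $R_m$. So assume $1\le\ell\le m$. The family $\{x^{\pmb a}\,m_\lambda(x_\ell,x_{\ell+1},\dots)\}_{\pmb a,\lambda}$ is indexed by the same data as the $m$-partitions $\Lambda=(\pmb a;\lambda)$, so both families have the same (finite) cardinality in each homogeneous piece $R_m^d$. It therefore suffices to show that the transition matrix to $\{m_\Lambda\}$ is upper unitriangular in some total order refining the dominance order on $m$-partitions.

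The plan is to expand $m_\lambda(x_\ell,x_{\ell+1},\dots)$ by separating the exponents that fall in positions $\{\ell,\dots,m\}$ from those in positions $\{m+1,m+2,\dots\}$. Grouping by the ``head'' $\pmb c=(c_\ell,\dots,c_m)$ (viewed as a composition supported in $\{\ell,\dots,m\}$), one obtains an expansion of the form
\[
 x^{\pmb a}\,m_\lambda(x_\ell,x_{\ell+1},\dots) \;=\; \sum_{\pmb c}\,m_{\Lambda(\pmb c)}(x), \qquad \Lambda(\pmb c):=(\pmb a+\pmb c;\,\nu(\pmb c)),
\]
where $\pmb c$ runs over compositions supported in $\{\ell,\dots,m\}$ whose multiset of nonzero entries is a sub-multiset of the parts of $\lambda$, $\nu(\pmb c)$ is the partition of the remaining parts of $\lambda$, and $\pmb a+\pmb c$ is componentwise. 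The $\pmb c=0$ term contributes $m_\Lambda$ with coefficient $1$.

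The main point is then to verify $\Lambda(\pmb c)>\Lambda$ in the dominance order \eqref{deforder} whenever $\pmb c\neq 0$. For each $i\in\{0,\dots,m\}$, by construction $\Lambda^{(i)}$ is the sorted partition of the multiset $\{a_j+[j\le i]\}_{j=1}^m\cup\mathrm{parts}(\lambda)$, while $\Lambda(\pmb c)^{(i)}$ is the sorted partition of $\{a_j+c_j+[j\le i]\}_{j=1}^m\cup\mathrm{parts}(\nu(\pmb c))$. The passage from the first multiset to the second consists, for each index $j$ with $c_j>0$, in replacing the two parts $a_j+[j\le i]$ and $c_j$ by the parts $a_j+c_j+[j\le i]$ and $0$. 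This elementary combining-of-parts operation weakly increases a partition in dominance, and strictly so when both merged parts are positive. Hence $\Lambda(\pmb c)^{(i)}\ge\Lambda^{(i)}$ for every $i$; choosing $i=m$ forces $a_j+1\ge 1>0$, so the merge is strict and gives $\Lambda(\pmb c)^{(m)}>\Lambda^{(m)}$ as soon as some $c_j>0$, yielding $\Lambda(\pmb c)>\Lambda$.

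Consequently, the transition matrix from $\{x^{\pmb a}\,m_\lambda(x_\ell,x_{\ell+1},\dots)\}$ to $\{m_\Lambda\}$, read off in any total order refining the dominance order on $m$-partitions, is upper unitriangular in each graded piece, hence invertible. This proves that $\{x^{\pmb a}\,m_\lambda(x_\ell,x_{\ell+1},\dots)\}$ is a basis of $R_m$; specializing to $\ell=1$ yields the second assertion. The only delicate step is the dominance estimate, which rests on the standard fact that combining two parts of a partition weakly (resp.\ strictly, when both are positive) increases dominance; the rest is bookkeeping of exponents.
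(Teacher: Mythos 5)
Your proof is correct, but it takes a genuinely different route from the paper's. The paper proceeds by downward induction on $\ell$ from the base case $\ell=m+1$: assuming a linear dependence among the $x^{\pmb a}\,m_\lambda(x_\ell,x_{\ell+1},\dots)$, it extracts the minimal power of $x_\ell$, sets $x_\ell=0$, and derives a contradiction with the induction hypothesis for $\ell+1$; this establishes linear independence and concludes by a dimension count. You instead write down the explicit expansion of $x^{\pmb a}\,m_\lambda(x_\ell,x_{\ell+1},\dots)$ in the $m_\Lambda$ basis by grouping the monomials of $m_\lambda$ according to the exponents landing in positions $\ell,\dots,m$, and check that every off-diagonal term $\Lambda(\pmb c)=(\pmb a+\pmb c;\nu(\pmb c))$ with $\pmb c\neq 0$ is strictly larger in the dominance order \eqref{deforder}, via the standard fact that merging two parts of a partition weakly (and, for two positive parts, strictly) increases dominance. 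Both the multiplicity-free expansion and the dominance estimate check out (each head $\pmb c$ yields a distinct $\pmb a+\pmb c$, so the coefficient of $m_\Lambda$ is exactly $1$, and the multiset comparison of $\Lambda(\pmb c)^{(i)}$ with $\Lambda^{(i)}$ is precisely a sequence of merges). Your approach buys more: it exhibits the transition matrix explicitly (all entries $0$ or $1$) and ties the statement to the dominance order the paper introduces for the Macdonald polynomials, at the cost of invoking the merging lemma and, implicitly, the fact that one can refine the strict relation to a total order --- if one worries about antisymmetry of \eqref{deforder}, the observation that $|\pmb a+\pmb c|>|\pmb a|$ for $\pmb c\neq 0$ already gives an unambiguous grading that makes the matrix block-unitriangular. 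The paper's induction is more elementary but yields only invertibility, not the structure of the change of basis.
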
  
\begin{proof}   It is obvious that  $  x^{\pmb a}\,  m_\lambda (x_{\ell},x_{\ell+1},\dots)$ belongs to $R_m$ since it is symmetric in $x_{m+1},x_{m+2},x_{m+3},\dots$.  We will show that
  $$
\{  x^{\pmb a}\,  m_\lambda (x_{\ell},x_{\ell+1},\dots) \, |\, d=|\pmb a|+|\lambda| \}
  $$
is a basis of $R_m^d$. We proceed by induction from the base case $\ell=m+1$ which, as we have seen, holds.
Having the right number of elements, we simply need to show that the  $ x^{\pmb a}\,  m_\lambda (x_{\ell},x_{\ell+1},\dots)$'s are linearly independent.  Suppose by contradiction that $\ell \leq m$ and that in $R_m^d$ we have
$$
\sum_{\pmb a, \lambda} c_{\pmb a, \lambda} \, x^{\pmb a} \, m_\lambda (x_{\ell},x_{\ell+1},\dots) =0
$$
Extracting the power of $x_\ell$ in $x^{\pmb a}$, we get 
$$
\sum_{\pmb a,\lambda }  c_{\pmb a, \lambda} \, x_\ell^{a_\ell} \, x^{ {\pmb b}} \,
m_\lambda (x_{\ell},x_{\ell+1},\dots) =0
$$
where ${ {\pmb b}}=(b_1,\dots,b_m)$ is such that $b_\ell=0$ and $b_{i}=a_i$ for $i \neq \ell$.
Let $k$ be the lowest value of $a_\ell$ such that there exists a non-zero coefficient $c_{\pmb a, \lambda}$.  Extracting $x_\ell^k$, we get that
$$
\sum_{\pmb a,\lambda }  c_{\pmb a, \lambda} \, x_\ell^{a_\ell-k} \, x^{ {\pmb b}} \, m_\lambda (x_{\ell},x_{\ell+1},x_{\ell+2},\dots) =0
$$
Letting $x_\ell=0$ in this equation, we obtain 
$$
 \sum_{\pmb a,\lambda \, : \, a_\ell=k}  c_{\pmb a, \lambda} \, x^{\pmb b} \, m_\lambda (0,x_{\ell+1},x_{\ell+2},\dots) =
 \sum_{\pmb a,\lambda \, : \, a_\ell=k}  c_{\pmb a, \lambda} \, x^{\pmb b} \, m_\lambda (x_{\ell+1},x_{\ell+2},\dots) =0
$$
 since $m_\lambda (0,x_{\ell+1},x_{\ell+2},\dots)= m_\lambda (x_{\ell+1},x_{\ell+2},\dots)$.  Hence
$$
x_\ell^k \sum_{\pmb a,\lambda \, : \, a_\ell=k}  c_{\pmb a, \lambda} \, x^{\pmb b} \, m_\lambda (x_{\ell+1},x_{\ell+2},\dots) =\sum_{\pmb a,\lambda \, : \, a_\ell=k}  c_{\pmb a, \lambda} \, x^{\pmb a} \, m_\lambda (x_{\ell+1},x_{\ell+2},\dots) = 0
$$
Note that, by the minimality of $k$, there is a non-zero coefficient in the sum.
This contradicts our induction hypothesis that
$\{  x^{\pmb a}\,  m_\lambda (x_{\ell+1},x_{\ell+2},\dots) \, |\, d=|\pmb a|+|\lambda| \}$ is
a basis of $R_m^d$.
\end{proof}  
The $m$-symmetric power sums are naturally defined as
$$
p_\Lambda(x) := x_1^{a_1} \dots x_m^{a_m} \, p_\lambda (x)= x^{\pmb a} \, p_\lambda (x)
$$
 It should be observed that
 the variables in $p_\lambda$, contrary to those of $m_\lambda$ in $m_\Lambda(x)$, start at $x_1$ instead of
$x_{m+1}$. In this expression,  $p_\lambda (x)$ is the usual power-sum symmetric function
$$
p_\lambda(x) = \prod_{i=1}^{\ell(\lambda)} \, p_{\lambda_i}(x)
$$
where $p_r(x)=x_1^r+x_2^r+\cdots$.  From Proposition~\ref{propom}, we get immediately that
$
\{ p_\Lambda(x)  \}_\Lambda
$
is a basis of $R_m$.

Another basis of the ring of symmetric functions is provided by the  Schur functions $s_\lambda(x)$
\begin{equation} \label{defSchur}
s_{\lambda}(x) = \det \left( h_{\lambda_i-i+j} \right)_{1\leq i,j \leq \ell(\lambda)}
\end{equation}
where $h_i=0$ if $i<0$.  Replacing $p_\lambda(x)$ by the Schur function $s_\lambda(x)$,
we define
$$
k_\Lambda(x) := x_1^{a_1} \dots x_m^{a_m} \, s_\lambda (x)= x^{\pmb a} \, s_\lambda (x)
$$
It is then again obvious due to Proposition~\ref{propom} that 
$
\{ k_\Lambda(x)  \}_\Lambda
$
is a basis of $R_m$. We stress that this basis is not the right extension of the Schur functions in the $m$-symmetric world (hence the notation  $k_\Lambda$ instead of  $s_\Lambda$).  A $t$-generalization of the $k_\Lambda(x)$ basis will nevertheless prove instrumental in the construction of the $m$-symmetric Schur functions.

\section{$m$-symmetric Macdonald polynomials} \label{secmsymMacod}
The $m$-symmetric Macdonald polynomials in $N$ variables are simply obtained by applying the $t$-symmetrization 
operator $\mathcal S_{m+1,N}^t$ introduced in \eqref{symop}
to non-symmetric Macdonald polynomials.
To be more specific, we associate to the $m$-partition $\Lambda= (\pmb a;\lambda)$ the composition $\eta_{\Lambda,N}={(a_1,\dots,a_m,0^n,\lambda_{\ell(\lambda)},...,\lambda_1)}$, where $n=N-m-\ell(\lambda) \geq 0$.
The corresponding $m$-symmetric Macdonald polynomial in $N$ variables  is then defined as
\begin{equation}
P_\Lambda(x_1,\dots,x_N;q,t)= \frac{1}{u_{\Lambda,N}(t)} \, \mathcal S^t_{m+1,N } \, E_{\eta_{\Lambda,N}}(x_1,\dots,x_N;q,t)  
\end{equation}
with the normalization constant $u_{\Lambda,N}(t)$  given by
\begin{equation} \label{normalization}
u_{\Lambda,N}(t)= [n]_{t^{-1}}!   \left( \prod_i [n_\lambda(i)]_{t^{-1}}! \right)
t^{(N-m)(N-m-1)/2} 
\end{equation}
where $n_\lambda(i)$ is the number of entries in $\lambda$ that are equal to $i$, and where
$$
[k]_q!=\frac{(1-q)(1-q^2)\cdots (1-q^k)}{(1-q)^k}
$$
We will see later that the normalization constant $u_{\Lambda,N}(t)$ is chosen such that the coefficient of $m_\Lambda$ in $P_\Lambda(x;q,t)$ is equal to 1. 

\begin{remark} \label{remarksymMac}
  We have that $T_i$ commutes with $\mathcal S^t_{m+1,N }$ for $i=1,\dots,m-1$. Hence, from Remark~\ref{remarksym}, if $\Lambda$ is such that  $a_i=a_{i+1}$ then $P_\Lambda(x_1,\dots,x_N;q,t)$ is symmetric in the variables $x_i,x_{i+1}$.
\end{remark}

The $m$-symmetric Macdonald polynomials are stable with respect to the number of variables. 
\begin{proposition} Let $N$ be the number of variables and suppose that $N\geq m+\ell(\lambda)$.
Then
  $$
  P_\Lambda(x_1,\dots,x_{N-1},0;q,t)=
  \left \{ 
  \begin{array}{ll}
    P_{\Lambda}(x_1,\dots,x_{N-1};q,t) & {\rm if~} N>m+\ell(\lambda) \\
    0 &   {\rm if~} N=m+\ell(\lambda) 
  \end{array} \right .  
$$
\end{proposition}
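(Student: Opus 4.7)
The plan is to reduce the stability of $P_\Lambda$ to the stability of the non-symmetric Macdonald polynomial \eqref{property1}. Using the sum-over-permutations definition \eqref{symop} together with iterated applications of \eqref{property2}, I would first expand
\[\mathcal S^t_{m,N} E_{\eta_{\Lambda,N}} = \sum_\nu c_\nu \, E_\nu,\]
where $\nu$ ranges over compositions that agree with $\eta_{\Lambda,N}$ on positions $1,\dots,m$ and permute its entries on positions $m+1,\dots,N$, with coefficients $c_\nu \in \mathbb Q(q,t)$. By \eqref{property1}, setting $x_N = 0$ annihilates $E_\nu$ unless $\nu_N = 0$, in which case $E_\nu|_{x_N=0} = E_{\nu_-}(x_1,\dots,x_{N-1})$.

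In the case $N = m + \ell(\lambda)$, the multiset of entries of $\eta_{\Lambda,N}$ on positions $m+1,\dots,N$ consists of the positive parts $\lambda_1,\dots,\lambda_{\ell(\lambda)}$, so every $\nu$ in the sum has $\nu_N > 0$ and the whole sum vanishes at $x_N = 0$, yielding $P_\Lambda(x_1,\dots,x_{N-1},0;q,t) = 0$.

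In the case $N > m + \ell(\lambda)$, there are $n = N - m - \ell(\lambda) \geq 1$ zeros in positions $m+1,\dots,N$ of $\eta_{\Lambda,N}$, so some $\nu$'s have $\nu_N = 0$. The surviving restriction
\[\Bigl(\sum_\nu c_\nu E_\nu\Bigr)\Big|_{x_N=0} = \sum_{\nu:\, \nu_N = 0} c_\nu \, E_{\nu_-}(x_1,\dots,x_{N-1})\]
is indexed by rearrangements $\nu_-$ of $\eta_{\Lambda,N-1}$ on positions $m+1,\dots,N-1$. Applying the same \eqref{property2}-expansion to $\mathcal S^t_{m,N-1} E_{\eta_{\Lambda,N-1}}$, one identifies this surviving sum as a scalar multiple of $\mathcal S^t_{m,N-1} E_{\eta_{\Lambda,N-1}}$, and the normalization \eqref{normalization} is chosen precisely so that this scalar equals $u_{\Lambda,N}(t)/u_{\Lambda,N-1}(t)$, giving $P_\Lambda(x_1,\dots,x_{N-1},0;q,t) = P_\Lambda(x_1,\dots,x_{N-1};q,t)$.

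The main obstacle is the scalar matching in the second case: showing that the subsum $\sum_{\nu_N = 0} c_\nu E_{\nu_-}$ factors as a constant times $\mathcal S^t_{m,N-1} E_{\eta_{\Lambda,N-1}}$, and identifying this constant as $u_{\Lambda,N}(t)/u_{\Lambda,N-1}(t)$. This is naturally approached via the recursion \eqref{extraN} (with $N$ replaced by $N-1$), which decomposes $\mathcal S^t_{m,N}$ in a way compatible with splitting $S_{N-m}$ into cosets of the stabilizer of the last position, so that the coefficients $c_\nu$ with $\nu_N = 0$ inherit their dependence on $\nu_-$ through a shared prefactor absorbed by the normalization.
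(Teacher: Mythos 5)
Your reduction to \eqref{property1} is the right idea, and your treatment of the vanishing case is complete and correct: when $N=m+\ell(\lambda)$ every composition $\nu$ appearing in the Hecke-expansion of $\mathcal S^t_{m,N}E_{\eta_{\Lambda,N}}$ has $\nu_N>0$, so the restriction kills every term. (For the record, the paper does not argue this way at all: it simply quotes formula (38) of Proposition 3 in Marshall's paper, which states exactly the identity $\mathcal S^t_{m,N}E_{\eta_{\Lambda,N}}\big|_{x_N=0}=\tfrac{u_{\Lambda,N-1}(t)}{u_{\Lambda,N}(t)}\,\mathcal S^t_{m,N-1}E_{\eta_{\Lambda,N-1}}$ in the non-degenerate case. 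So you are attempting to reprove the cited input.)

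The case $N>m+\ell(\lambda)$, however, has a genuine gap, and it sits exactly where you flag ``the main obstacle.'' Two things are asserted but not established. First, that the surviving sum $\sum_{\nu:\nu_N=0}c_\nu E_{\nu_-}$ is a scalar multiple of $\mathcal S^t_{m,N-1}E_{\eta_{\Lambda,N-1}}$: this needs an actual argument (e.g.\ that the restriction is still symmetric in $x_{m+1},\dots,x_{N-1}$ and that the symmetric part of the span of the relevant $E_{\nu_-}$'s is one-dimensional, or an explicit tracking of the coefficients $c_\nu$), and the route you propose via \eqref{extraN} does not obviously deliver it: writing $\mathcal S^t_{m,N}=(1+T_{N-1}+\cdots+T_{m+1}\cdots T_{N-1})\mathcal S^t_{m,N-1}$ and applying it to $E_{\eta_{\Lambda,N}}$ produces from the inner symmetrizer only compositions with last entry $\lambda_1>0$ (when $\lambda\neq\emptyset$), so the coset decomposition does not align with the partition of terms according to the value of $\nu_N$ in the way your last paragraph suggests. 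Second, the identification of the scalar as $u_{\Lambda,N}(t)/u_{\Lambda,N-1}(t)$ is precisely the quantitative content of the statement --- it is why $u_{\Lambda,N}(t)$ carries the factors $[n]_{t^{-1}}!$ and $t^{(N-m)(N-m-1)/2}$ --- and saying ``the normalization is chosen precisely so that this scalar equals $u_{\Lambda,N}(t)/u_{\Lambda,N-1}(t)$'' assumes the conclusion rather than proving it. To close the gap you would need either to carry out the coefficient computation (essentially Marshall's proof, e.g.\ by comparing the coefficient of the monomial $x^{\eta_{\Lambda,N-1}}$ on both sides, as is done later in the paper for the leading coefficient of $P_\Lambda$), or simply to cite the result as the paper does.
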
  
\begin{proof}
  It was shown in \cite{Mar} (see formula (38) in Proposition 3, where the version of $u_{\Lambda,N}(t)$ used therein is slightly different from the one we use here)
  that
  $$
  \mathcal S_{m+1,N}^t E_{\eta_{\Lambda,N}} \Big|_{x_N=0} =
\left\{
  \begin{array}{ll}  
\frac{u_{\Lambda,N-1}(t)}{u_{\Lambda,N}(t)} \mathcal S_{m+1,N-1}^t E_{\eta_{\Lambda,N-1}} & {\rm if~} N>m+\ell(\lambda) \\
0 & {\rm otherwise}
  \end{array}
\right .  
$$  
The proposition then follows immediately.
\end{proof}
Since the $m$-symmetric monomial functions $m_\Lambda$ are also such that
  $$
  m_\Lambda(x_1,\dots,x_{N-1},0)=
  \left \{ 
  \begin{array}{ll}
    m_{\Lambda}(x_1,\dots,x_{N-1}) & {\rm if~} N>m+\ell(\lambda) \\
    0 &   {\rm if~} N=m+\ell(\lambda) 
  \end{array} \right .  
$$
  we have immediately the following corollary.
\begin{corollary} \label{coroinfinite}
 The expansion coefficients $d_{\Lambda \Omega}(q,t)$ in
 \begin{equation} \label{Macdod}
P_\Lambda(x_1,\dots,x_N;q,t) = \sum_{\Omega} d_{\Lambda \Omega}(q,t) m_\Omega(x_1,\dots,x_N)
  \end{equation}
 do not depend on the number of variables $N$.  Moreover, the number of terms in the r.h.s. of \eqref{Macdod} does not depend on $N$ as long as $N\geq m+|\Lambda|$ (the longest $m$-partition of a given degree $d$ being $(0^m;1^d)$).
\end{corollary}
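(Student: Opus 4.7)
The plan is to deduce the corollary by comparing, for two consecutive values of $N$, the $m$-symmetric monomial expansion of $P_\Lambda$ after setting $x_N=0$, and exploiting both stability statements already established above the corollary.

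Concretely, fix $N$ large (in particular $N>m+\ell(\lambda)$, so the preceding proposition applies and $P_\Lambda(x_1,\dots,x_{N-1},0;q,t)=P_\Lambda(x_1,\dots,x_{N-1};q,t)$). Write the $N$-variable expansion
$$
P_\Lambda(x_1,\dots,x_N;q,t)=\sum_{\Omega}d^{(N)}_{\Lambda\Omega}(q,t)\,m_\Omega(x_1,\dots,x_N),
$$
where I decorate the coefficients with $N$ to track a potential $N$-dependence. Substituting $x_N=0$ and using the stability statement for $m_\Omega$ recalled just before the corollary, the right-hand side collapses to
$$
\sum_{\Omega:\,\ell(\mu)\le N-1-m}d^{(N)}_{\Lambda\Omega}(q,t)\,m_\Omega(x_1,\dots,x_{N-1}),
$$
while the left-hand side equals $P_\Lambda(x_1,\dots,x_{N-1};q,t)$ by the preceding proposition. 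Comparing with the analogous $m$-monomial expansion in $N-1$ variables and invoking the linear independence of $\{m_\Omega(x_1,\dots,x_{N-1})\}$ indexed by $\Omega$ with $\ell(\mu)\le N-1-m$ (which is a basis of the corresponding graded piece of $R_m$ restricted to $N-1$ variables), I conclude $d^{(N)}_{\Lambda\Omega}=d^{(N-1)}_{\Lambda\Omega}$ for every such $\Omega$. Iterating this step shows that $d_{\Lambda\Omega}(q,t)$ is independent of $N$, as soon as $N$ is large enough that both sides are nontrivial.

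For the second assertion, recall $P_\Lambda$ is homogeneous of degree $d:=|\Lambda|$, so any $\Omega$ with $d_{\Lambda\Omega}(q,t)\neq 0$ satisfies $|\Omega|=d$, hence $\ell(\mu)\le d$ with equality only for $\mu=(1^d)$ (and $\pmb b=(0^m)$). Consequently every $\Omega$ appearing in the expansion satisfies $\ell(\mu)\le d=|\Lambda|$, so the condition $\ell(\mu)\le N-m$ is automatic as soon as $N\ge m+|\Lambda|$; beyond that threshold no new terms can enter the sum and the support of the expansion is frozen.

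I do not expect a real obstacle here: the only subtle point is making sure that, before stabilization, the monomials $\{m_\Omega(x_1,\dots,x_{N-1})\}_{\ell(\mu)\le N-1-m}$ are linearly independent in $R_m$ restricted to $N-1$ variables, which is exactly the finite-variable analogue of the basis statement underlying Proposition~\ref{propom}. Everything else is bookkeeping with the two stability identities displayed just before the corollary.
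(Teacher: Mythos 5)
Your proposal is correct and follows essentially the same route as the paper: the paper simply notes that the $m$-symmetric monomials obey the same stability under $x_N\mapsto 0$ as the polynomials $P_\Lambda$ and declares the corollary immediate, which is exactly the comparison-of-expansions argument you spell out. The only difference is that you make the linear-independence step and the bound $\ell(\mu)\le |\Lambda|$ explicit, which is fine.
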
  
We see from Remark~\ref{remarksym} that the $m$-symmetric Macdonald polynomials are symmetric in the variables $x_{m+1},x_{m+2},\dots,x_N$.  From the previous corollary, we can thus let $N\to \infty$ and define the
$m$-symmetric Macdonald polynomials $P_\Lambda(x;q,t) \in R_m$, for $x=(x_1,x_2,x_3,\dots)$, as
$$
P_\Lambda(x;q,t) = \sum_{\Omega} d_{\Lambda \Omega}(q,t) m_\Omega(x)
$$
where the coefficients $d_{\Lambda \Omega}(q,t)$ are those of \eqref{Macdod}.  

\begin{remark}
  From the comment  at the end of Section~\ref{secnonsym}, when $m=0$  an $m$-Macdonald polynomial is a usual Macdonald polynomial. In the case $m=1$, an $m$-symmetric Macdonald polynomial corresponds to the $\theta_1$ coefficient of a Macdonald polynomial in superspace with fermionic degree equal to 1 \cite{BDLM,BDLM2}.
  \end{remark}

It is obvious from  \eqref{tsym1} that  $\mathcal S_{m+1,N}^t$ commutes with $Y_i$, for $i=1,\dots,m$.  It is then immediate from the definition of the $m$-symmetric Macdonald polynomials and \eqref{eigenvalY} that $P_\Lambda(x_1,\dots,x_N;q,t)$ is an eigenfunction of $Y_i$ with eigenvalue $(\bar \eta_{\Lambda,N})_i$.  Now, it is not hard to see that the diagram of $\Lambda$ is obtained from that of $\eta_{\Lambda,N}$  by removing the circles filled with an $m+1,m+2,\dots,N$. Hence, for $i=1,\dots,m$, we have
\begin{equation} \label{defeigeni}
  Y_i P_\Lambda(x_1,\dots,x_N;q,t) = \varepsilon_\Lambda^{(i)}(q,t)  P_\Lambda(x_1,\dots,x_N;q,t), \qquad {\rm with} \quad  \varepsilon_\Lambda^{(i)}(q,t)=  q^{a_i} t^{1-r_\Lambda(i)} 
\end{equation}
where we recall that $r_\Lambda(i)$ is the row in which the
$i$-circle  appears in the diagram associated to $\Lambda$.  Letting
$$
D =  Y_{m+1}+\dots+Y_N - \sum_{i=m+1}^N t^{1-i}
$$
it is also quite easy to check, using  \eqref{tsym1} and \eqref{TYi}, that $D$ commutes with $\mathcal S_{m+1,N}^t$.  Following the argument used previously in the case of $Y_i$, this implies that  $P_\Lambda(x_1,\dots,x_N;q,t)$ is an eigenfunction of $D$ with eigenvalue
$$
(\bar \eta_{\Lambda,N})_{m+1}+ \cdots + (\bar \eta_{\Lambda,N})_{N} -  \sum_{i=m+1}^N t^{1-i}
$$
Note that the $i$-circles, for $i=m+\ell(\lambda)+1,\dots,N$, are located in rows $m+\ell(\lambda)+1$ up to $N$ of the diagram of $\eta_{\Lambda,N}$, which are all of length zero.
From our previous observation that  the diagram of $\Lambda$ is obtained from that of   $\eta_{\Lambda,N}$ by removing the circles filled with an $m+1,m+2,\dots,N$  we then get that
\begin{equation} \label{defeigenD}
D \,  P_\Lambda(x_1,\dots,x_N;q,t)= \varepsilon_\Lambda^{D}(q,t)
P_\Lambda(x_1,\dots,x_N;q,t), \qquad {\rm with} \quad  
 \varepsilon_\Lambda^{D}(q,t)=
 {\sum_{i}}' q^{\Lambda^{(0)}_i} t^{1-i} - \sum_{i=m+1}^{m+\ell(\lambda)} t^{1-i}
\end{equation}
 where the prime indicates that the sum is only over the rows of the diagram of $\Lambda$ that do not end with a circle.  We stress that the eigenvalues
 $ \varepsilon_\Lambda^{(i)}(q,t)$ and $ \varepsilon_\Lambda^{D}(q,t)$  do not depend on the number $N$ of variables.

 Before proving that the $m$-symmetric Macdonald polynomials are triangularly related to the monomials in the dominance order \eqref{deforder}, we need the following.
 \begin{proposition} \label{propo4}
  The operators $Y_1,\dots,Y_m$ and $D$ have a triangular action on monomials (in the dominance order on $m$-partitions).  To be more precise,  for certain coefficients $b_{\Lambda \Omega}^{(i)}(q,t)$ and $b_{\Lambda \Omega}'(q,t)$ in $\mathbb Q[q,t]$, we have that
 \begin{equation} \label{triangmY1}
   Y_i m_\Lambda(x_1,\dots,x_N) =\varepsilon_\Lambda^{(i)}(q,t)\,  m_\Lambda(x_1,\dots,x_N) + \sum_{\Omega < \Lambda} b_{\Lambda \Omega}^{(i)}(q,t) \, m_\Omega(x_1,\dots,x_N)
\end{equation}
and
\begin{equation}\label{triangmY2}
  D \, m_\Lambda(x_1,\dots,x_N) = \varepsilon_\Lambda^{D}(q,t)\,    m_\Lambda(x_1,\dots,x_N) + \sum_{\Omega < \Lambda} b_{\Lambda \Omega}'(q,t) \, m_\Omega(x_1,\dots,x_N)
  \end{equation}
with $\varepsilon_\Lambda^{(i)}(q,t)$ and $\varepsilon_\Lambda^{D}(q,t)$ given in \eqref{defeigeni} and \eqref{defeigenD} respectively.
\end{proposition}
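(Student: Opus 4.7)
The plan is to reduce everything to the triangular action of the Cherednik operators on ordinary monomials, and then convert the Bruhat order on compositions into the dominance order on $m$-partitions. I will focus on the case of $Y_i$ ($i\leq m$); the argument for $D$ is entirely parallel.

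First, I expand $m_\Lambda(x_1,\dots,x_N)=\sum_\alpha x^{(\pmb a,\alpha)}$, where $\alpha$ ranges over distinct compositions of length $N-m$ rearranging $(\lambda_1,\dots,\lambda_{\ell(\lambda)},0,\dots,0)$, and apply \eqref{triangY} term by term. The leading eigenvalue $\bar\eta_i$ for $\eta=(\pmb a,\alpha)$ equals $q^{a_i}t^{1-r_\eta(i)}=\varepsilon_\Lambda^{(i)}(q,t)$ independently of $\alpha$: the convention placing smaller-indexed circles above larger-indexed ones in equal-size rows forces the $i$-circle ($i\leq m$) in the diagram of $(\pmb a,\alpha)$ to occupy the same row as in the diagram of $\Lambda$, regardless of how the symmetric tail $\alpha$ is arranged. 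Hence the leading contribution assembles into $\varepsilon_\Lambda^{(i)}(q,t)\, m_\Lambda$.

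Next, I verify $Y_im_\Lambda\in R_m$. By \eqref{tsym1}, $T_jY_i=Y_iT_j$ for $j\neq i,i-1$, in particular for $j=m+1,\dots,N-1$ since $i\leq m$; together with $T_jm_\Lambda=tm_\Lambda$ (from \eqref{eqTi} and the symmetry of $m_\Lambda$ in $x_j,x_{j+1}$) this gives $T_j(Y_im_\Lambda)=t(Y_im_\Lambda)$, so $Y_im_\Lambda$ is symmetric in $x_{m+1},\dots,x_N$. It therefore admits a unique expansion $Y_im_\Lambda=\sum_\Omega b_{\Lambda\Omega}^{(i)}(q,t)\, m_\Omega$. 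To pin down $b_{\Lambda\Lambda}^{(i)}$, I extract the coefficient of the distinguished monomial $x^{\tilde\eta}$ with $\tilde\eta=(a_1,\dots,a_m,\lambda_1,\dots,\lambda_{\ell(\lambda)},0,\dots,0)$ inside $m_\Lambda$. A short inversion count shows $\tilde\eta$ is the Bruhat-maximum composition among the $(\pmb a,\alpha)$'s: putting the tail in decreasing order of $\lambda$ eliminates all within-tail strict inversions, while the cross inversions between head and tail are independent of how the tail is ordered. Consequently no Bruhat-smaller term can reach $x^{\tilde\eta}$, and its coefficient in $Y_im_\Lambda$ is exactly $\varepsilon_\Lambda^{(i)}(q,t)$.

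The last step is to show $b_{\Lambda\Omega}^{(i)}(q,t)=0$ unless $\Omega<\Lambda$ in the order \eqref{deforder}. This reduces to a combinatorial lemma: if $\nu\prec(\pmb a,\alpha)$ in Bruhat on compositions and $\Omega=(\pmb b;\mu)$ is formed from $\nu$ by setting $\pmb b=(\nu_1,\dots,\nu_m)$ and $\mu$ equal to the decreasing rearrangement of $(\nu_{m+1},\dots,\nu_N)$, then $\Omega\leq\Lambda$. The case $\nu^+<\pmb a\cup\lambda$ yields $\Omega^{(0)}<\Lambda^{(0)}$ immediately, and $\Omega^{(j)}\leq\Lambda^{(j)}$ for $j=1,\dots,m$ follows by carefully tracking how the ``add $1$ to the $j$ designated non-symmetric entries, then re-sort'' operation interacts with the partition dominance; the remaining case $\nu^+=\pmb a\cup\lambda$ with $w_\nu>w_{(\pmb a,\alpha)}$ is treated by verifying that each elementary Bruhat cover on compositions induces a dominance-non-increasing move on the corresponding $m$-partition. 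The argument for $D$ runs in parallel, using $\varepsilon_\Lambda^D(q,t)$ in place of $\varepsilon_\Lambda^{(i)}(q,t)$; triangularity of $D$ on monomials follows from \eqref{triangY} applied to each $Y_j$ summand, and the fact that $D$ preserves $R_m$ was already established via its commutation with $\mathcal S^t_{m,N}$. The main obstacle is the combinatorial lemma above, particularly the simultaneous verification of $\Omega^{(j)}\leq\Lambda^{(j)}$ for all $j=0,\dots,m$ under the $+1$ shifts, a subtle statement about multiset comparisons.
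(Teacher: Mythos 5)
There is a genuine gap in the key step. Your plan is to deduce the dominance-triangularity on $m$-partitions purely from the Bruhat-triangularity \eqref{triangY}, via the ``combinatorial lemma'' that $\nu\prec(\pmb a,\alpha)$ implies $\Omega\leq\Lambda$ for the associated $m$-partitions. That lemma is false. Take $m=1$, $N=3$, $\eta=(0,2,2)$ (so $\Lambda=(0;(2,2))$) and $\nu=(2,1,1)$ (so $\Omega=(2;(1,1))$). Since $\nu^+=(2,1,1)<(2,2)=\eta^+$ in dominance, we have $\nu\prec\eta$ in the Bruhat order of the paper. However $\Omega^{(1)}=(\nu+1^1)^+=(3,1,1)$ while $\Lambda^{(1)}=(\eta+1^1)^+=(2,2,1)$, and $(3,1,1)\not\leq(2,2,1)$; in fact $\Omega$ and $\Lambda$ are incomparable in the order \eqref{deforder}. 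So Bruhat-smallness of the exponents does not control the shifted partitions $\Omega^{(j)}$, and \eqref{triangY} alone cannot rule out a term $m_\Omega$ with $\Omega\not<\Lambda$ in $Y_1 m_\Lambda$. No amount of ``careful tracking'' of the $+1^j$ shifts can repair this, because the statement you are trying to prove about arbitrary Bruhat-smaller $\nu$ is simply not true.

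What is actually needed (and what the paper uses) is a strictly finer triangularity of the Cherednik operators themselves: if $x^\gamma$ appears in $Y_i x^\eta$ then not only $\gamma^{(0)}=\gamma^+\leq\eta^+$ but also $(\gamma+1^j)^+\leq(\eta+1^j)^+$ for every $j$. This is a property of the operators $Y_i$, proved in \cite{BDLM2} (Proposition~6 there), not a consequence of the Bruhat order; it excludes precisely the problematic exponents such as $\nu=(2,1,1)$ above. The rest of your argument is sound and matches the paper: the commutation $T_jY_i=Y_iT_j$ for $j\geq m+1$ shows $Y_im_\Lambda\in R_m$, the eigenvalue $\bar\eta_i=\varepsilon^{(i)}_\Lambda(q,t)$ is indeed independent of the arrangement $\alpha$ of the symmetric tail, and extracting the coefficient of a Bruhat-maximal monomial of $m_\Lambda$ correctly identifies the diagonal coefficient. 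But the central triangularity claim needs the imported result from \cite{BDLM2}, not the combinatorial lemma you propose.
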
  
\begin{proof}
  It is obvious that the operators $Y_i$ for $i=1,...,m$ and $D$ preserve $R_m$ since, from \eqref{tsym1} and \eqref{TYi},  they all commute with $T_j$ for $j=m+1,\dots,N-1$.  We thus only need to show the triangularity and that the dominant coefficients are as indicated.

 It is shown in \cite{BDLM2} (see the proof of Proposition~6 therein) that if $x^\gamma$ appears in $Y_i x^\eta$, then $\gamma^{(0)} \leq \eta^{(0)}$ and $\gamma^{(i)} \leq \eta^{(i)}$ for any $i$, where $\gamma^{(0)}=\gamma^+$ and $\gamma^{(i)}=(\gamma+1^i)^+$.
 We therefore get that a given ``smaller term'' $x^\gamma$ appearing in
\eqref{triangY} is such that 
$\gamma^{(0)} \leq \eta^{(0)}$ and $\gamma^{(j)} \leq \eta^{(j)}$ for all $j =1,\dots,m$. Observe that if $x^\gamma$ is a monomial that appears in $m_\Omega$, then  $\gamma^{(0)} = \Omega^{(0)}$ and $\gamma^{(j)} = \Omega^{(j)}$ for every $j =1,\dots,m$.   Hence, from our previous observation,
 $$
Y_i m_\Lambda = c_{i,\Lambda}(q,t) \,  m_\Lambda + \sum_{\Omega < \Lambda} b_{\Lambda \Omega}^{(i)}(q,t) m_\Omega  \qquad {\rm and} \qquad D \,  m_\Lambda =   c'_{\Lambda}(q,t)\,  m_\Lambda + \sum_{\Omega < \Lambda} b_{\Lambda \Omega}'(q,t) m_\Omega
$$
for certain coefficients $c_{i,\Lambda}(q,t)$, $ b_{\Lambda \Omega}^{(i)}(q,t)$,
 $c_{\Lambda'}(q,t)$, and $ b_{\Lambda \Omega}'(q,t)$.
It remains to show that $c_{i,\Lambda}(q,t)=\varepsilon_\Lambda^{(i)}(q,t)$ and
$c_{\Lambda}'(q,t)=\varepsilon_\Lambda^D(q,t)$.
Let $\eta$ be maximal in the order $\prec$ on compositions among all the monomials $x^\eta$ appearing in $m_\Lambda$. From \eqref{triangY}, we then have that the coefficients of $x^\eta$ in $Y_i m_\Lambda$ and  $D  \,  m_\Lambda$ are respectively equal to $\varepsilon_\Lambda^{(i)}(q,t)$ and $\varepsilon_\Lambda^D(q,t)$.  By symmetry, the coefficients of $m_\Lambda$ are as wanted.
  \end{proof}

\begin{proposition} \label{propounitriang}
We have that
\begin{equation} \label{unitriang}
P_\Lambda(x;q,t)= m_\Lambda + \sum_{\Omega < \Lambda} d_{\Lambda \Omega}(q,t) \, m_\Omega
\end{equation}
Hence, the $m$-symmetric Macdonald polynomials  form a basis of $R_m$.
\end{proposition}
\begin{proof} We prove the result when the number of variables $N$ is finite.  The infinite case will then follow from Corollary~\ref{coroinfinite}. 

  First note that the eigenvalues $\varepsilon_\Lambda^{(i)}(q,t)$, for $i=1,\dots,m$ and $\varepsilon_\Lambda^{D}(q,t)$ completely determine $\Lambda$ (the powers of $q$ in $\varepsilon_\Lambda^{D}(q,t)$ determine $\lambda$ while the power of $q$ in $\varepsilon_\Lambda^{(i)}(q,t)$ determines $a_i$).
  Suppose that there exists a term $m_\Omega$ with non-zero coefficient in the monomial expansion of $P_\Lambda$ such that $\Omega \not \leq \Lambda$ and let $\Gamma$ be maximal among those terms.  From Proposition~\ref{propo4}, the coefficients of  $m_\Gamma$ in 
  $Y_i P_\Lambda$ and $D P_\Lambda$ are respectively $\varepsilon_\Gamma^{(i)}$ and 
$\varepsilon_\Gamma^{D}$.  But from \eqref{defeigeni} and \eqref{defeigenD}, those coefficients are also equal to  $\varepsilon_\Lambda^{(i)}(q,t)$ and 
  $\varepsilon_\Lambda^{D}(q,t)$.  Hence  $\varepsilon_\Gamma^{(i)}= \varepsilon_\Lambda^{(i)}$ and $\varepsilon_\Gamma^{D}=\varepsilon_\Lambda^{D}$.  Since, as we have seen, those eigenvalues determine $\Lambda$, we have the contradiction that $\Gamma\leq \Gamma=\Lambda$. 

  Finally, we need to show that the coefficient of $m_\Lambda$ in $P_\Lambda$ is equal to 1.  It is shown in \cite{Mar} (see equation (5.35) therein) that, when $m=0$, the coefficient of  $x^\lambda$ in $\mathcal S_{1,N}^t E_{\eta_{\lambda,N}}$ is equal to  the coefficient of  $x^\lambda$ in $\mathcal S_{1,N}^t x^{\eta_{\lambda,N}}$, which is itself equal to  $u_{\lambda,N}(t)$. The same exact argument can be used to show that  the coefficient of  $x^\Lambda$ in $\mathcal S_{m+1,N}^t E_{\eta_{\Lambda,N}}$ is equal to  the coefficient of  $x^\Lambda$ in $\mathcal S_{m+1,N}^t x^{\eta_{\Lambda,N}}$, which is itself equal to  $u_{\Lambda,N}(t)$. The coefficient of $m_\Lambda$ in $P_\Lambda$ is then equal to 1 given the normalization of $P_\Lambda$.

\end{proof}

\section{Definition of the (dual) $m$-symmetric Schur functions} \label{secdualschur}
In the remainder of this article, we will use the notation
$s_i \pmb a=(a_1,\dots, a_{i+1},a_i, \dots, a_m)$, and more generally
$s_i \Lambda=(s_i \pmb a;\lambda)$.

Let $H_{\pmb a}(x_1,\dots,x_m;t)= E_{\pmb a}(x_1,\dots,x_N;0,t)$ be the  non-symmetric Hall-Littlewood polynomial (observe that the non-symmetric Macdonald polynomial $E_{\pmb a}(x_1,\dots,x_N;q,t)$ only depends on the variables $x_1,\dots,x_m$ when $q=0$ and the indexing composition has length at most $m$).   For $i=1,\dots,m-1$, the action of the Hecke operator $T_i$
on the non-symmetric Hall-Littlewood polynomials, which can be obtained by taking the limit $q=0$ in \eqref{property2}, is the following:
\begin{equation} \label{TiH}
T_i  H_{\pmb a} =
\left \{ \begin{array}{ll}
     H_{s_i \pmb a}  & {\rm if~} a_i > a_{i+1} \\
    (t-1)  H_{\pmb a} +  t H_{s_i \pmb a}   & {\rm if~} a_i < a_{i+1} \\
     t H_{\pmb a}  & {\rm if~} a_i = a_{i+1}
\end{array} \right .   
\end{equation}
We should note that the polynomial  $H_{\pmb a}(x_1,\dots,x_m;t)$ can be constructed recursively from this formula since $H_{\pmb a}=x^{\pmb a}$ when $\pmb a$ is dominant (it is not too difficult to show that it is indeed the case using \eqref{eqPhi} and \eqref{TiH}).  When $t=1$, the operator $T_i$ becomes $K_{i,i+1}$ and it is then immediate from this construction that $H_{\pmb a}(x_1,\dots,x_m;1)=x^{\pmb a}$. It is thus natural to define the following $t$-generalization of the
$k_\Lambda(x)$ basis:
$$
k_\Lambda(x;t)= H_{\pmb a}(x_1,\dots,x_m;t) s_\lambda(x)
$$
We stress that
$$
\{  k_\Lambda(x;t) \, | \, \pmb a \in \mathbb Z_{\geq 0}^m\}
$$
is indeed a basis of $R_m$ since we have seen that the special case $k_\Lambda(x;1)=k_\Lambda(x)$ is a basis of $R_m$ (that is, the $k_\Lambda(x;t)$'s are linearly independent and thus form a basis of $R_m$ by dimension considerations).

Since $s_\lambda(x)$ commutes with $T_i$ for all $i$, \eqref{TiH} implies immediately the following.
\begin{lemma} \label{propoTik}
 For $i=1,\dots,m-1$, the operator $T_i$ acts on 
 $k_\Lambda(x;t)$ as 
\begin{equation} \label{Tik}
T_i  k_{\Lambda} =
\left \{ \begin{array}{ll}
     k_{\tilde \Lambda}  & {\rm if~} a_i > a_{i+1} \\
    (t-1)  k_{\Lambda} +  t k_{\tilde  \Lambda}   & {\rm if~} a_i < a_{i+1} \\
     t k_{\Lambda}  & {\rm if~} a_i = a_{i+1}
\end{array} \right .    
\end{equation}
where $\tilde \Lambda=s_i \Lambda$.
\end{lemma}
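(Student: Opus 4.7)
The plan is to reduce the action of $T_i$ on $k_\Lambda(x;t) = H_{\pmb a}(x_1,\dots,x_m;t)\, s_\lambda(x)$ to the already-known action on the non-symmetric Hall-Littlewood factor, given in equation~\eqref{TiH}, by pulling the Schur factor $s_\lambda(x)$ through $T_i$.

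First I would establish the commutation fact: for any polynomial $g$ symmetric in $x_i$ and $x_{i+1}$, multiplication by $g$ commutes with $T_i$. This is a one-line computation from the definition \eqref{eqTi}: since $K_{i,i+1}(fg) = K_{i,i+1}(f) \cdot g$ when $K_{i,i+1} g = g$, one obtains
\[
T_i(fg) = tfg + \frac{tx_i-x_{i+1}}{x_i-x_{i+1}}\bigl(K_{i,i+1}(f)\,g - fg\bigr) = g\cdot T_i f.
\]
Since $s_\lambda(x)$ is a symmetric function in all variables $x_1,x_2,\dots$, it is in particular symmetric in $x_i,x_{i+1}$ for every $i=1,\dots,m-1$, so the identity $T_i(H_{\pmb a}\, s_\lambda) = (T_i H_{\pmb a})\, s_\lambda$ applies to every $k_\Lambda$.

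Next I would substitute the three cases of \eqref{TiH} directly. If $a_i > a_{i+1}$, then $T_i H_{\pmb a} = H_{s_i \pmb a}$, so multiplying by $s_\lambda$ gives $k_{\tilde\Lambda}$ with $\tilde\Lambda = s_i\Lambda$. The case $a_i < a_{i+1}$ yields $T_i H_{\pmb a} = (t-1)H_{\pmb a} + t H_{s_i \pmb a}$, which produces $(t-1)k_\Lambda + t k_{\tilde\Lambda}$ after multiplying by $s_\lambda$. The case $a_i = a_{i+1}$ yields $t H_{\pmb a}$, hence $t k_\Lambda$.

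There is essentially no obstacle here: the only subtlety is verifying the commutation of $T_i$ with multiplication by symmetric functions in $x_i,x_{i+1}$, which was already implicitly used in Remark~\ref{remarksym} and is an immediate consequence of \eqref{eqTi}. The bulk of the work has already been done in establishing \eqref{TiH} for non-symmetric Hall-Littlewood polynomials, so the lemma reduces to a routine pass-through argument.
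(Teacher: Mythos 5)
Your proposal is correct and follows exactly the paper's route: the paper proves the lemma in one line by noting that $s_\lambda(x)$, being symmetric in $x_i,x_{i+1}$, commutes with $T_i$, so the three cases follow directly from \eqref{TiH}. Your verification of the commutation from \eqref{eqTi} is the right (and only) detail to check, and your case analysis matches the paper's.
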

A bilinear scalar product $\langle \cdot, \cdot \rangle_m$ on $R_m$ is defined by requiring that the $k_\Lambda(x;t)$ basis be such that:
\begin{equation} \label{scalprod}
\langle k_\Lambda(x;t),  k_\Omega(x;t)  \rangle_m = \delta_{\Lambda \Omega} t^{{\rm Inv} (\pmb a)}
\end{equation}
where
\begin{equation} \label{definv}
{\rm Inv} (\pmb a) = \#\{ i<j \, |\, a_i < a_j \}
\end{equation}
is the number of inversions in $\pmb a$.

The following proposition follows immediately from Lemma~\ref{propoTik}.
\begin{proposition} \label{propoTidual}
  For $i=1,\dots,m-1$, the operator $T_i$ is self-adjoint with respect to the scalar product
  $\langle \cdot , \cdot \rangle_m$, that is,
  $$
\langle T_i f, g \rangle_m= \langle  f, T_i g \rangle_m \quad  {\rm for~all~} f,g \in R_m
  $$
\end{proposition}  
\begin{proof} We prove that the relation holds on the basis $\{ k_\Lambda(x;t)\}_\Lambda$.
  Let $\Lambda=(\pmb a;\lambda)$ be such that $a_i>a_{i+1}$.
Using Lemma~\ref{propoTik}, we can easily check that, for $i=1,\dots,m-1$, we have 
  $$
\langle T_i k_\Lambda, k_{\tilde \Lambda} \rangle = t^{{\rm Inv} (s_i \pmb a)} = t^{1+{\rm Inv} (\pmb a)}  = \langle  k_\Lambda, T_i k_{\tilde \Lambda} \rangle
$$
which also implies that $\langle T_i k_{\tilde \Lambda}, k_{\Lambda} \rangle  = \langle  k_{\tilde \Lambda}, T_i k_{ \Lambda} \rangle$. This completes the proof since the only non-zero remaining scalar products to verify  are symmetric.
\end{proof}

We are now ready to introduce the dual $m$-symmetric Schur functions.  Once this is done, we will then be able to define the $m$-symmetric Schur functions by duality (see Definition~\ref{defmschur}).  As mentioned in the introduction, the  $m$-symmetric Schur functions will provide the right basis to state our positivity conjecture for the $m$-symmetric Macdonald polynomials (Conjecture~\ref{mainconjec}). The fact, by Remark~\ref{remarkstandard}, that the   $(q,t)$-Kostka coefficient $K_{\Omega \Lambda}(q,t)$ at $q=t=1$ counts the number of standard tableaux of a certain shape and, by Corollary~\ref{corospecial}, that  the usual $(q,t)$-Kostka coefficients $K_{\mu \lambda}(q,t)$ are special cases of the coefficients $K_{\Omega \Lambda}(q,t)$ will give  further evidence that the $m$-symmetric Schur functions  do indeed provide the right extension of the Schur functions to the $m$-symmetric world.

In the following, it will prove convenient to use the plethystic notation in which, for a symmetric function $f$ and $X=x_1+x_2+\cdots$, we let $f[X]=f(x)=f(x_1,x_2,\dots)$.  More generally, we have using this notation that $f[X+x_1+\cdots+x_k]=f(x_1,\dots,x_k,x_1,x_2,\dots)$.

Let $\nu$ be a partition of length $\ell$.  For a sequence of alphabets $X_1,\dots,X_\ell$, where $\ell=\ell(\nu)$, the multi-Schur function $s_\nu(X_1,\dots,X_\ell)$ is defined as \cite{MacS}
\begin{equation} \label{eqmulti}
s_\nu(X_1,\dots,X_\ell) = \det \Bigl( h_{\nu_i-i+j}[X_i] \Bigr)_{1 \leq i,j \leq \ell}
\end{equation}
Observe that when comparing with \eqref{defSchur}, we get that $s_\nu(X_1,\dots,X_\ell)=s_\nu(x)$
whenever $X=X_1=X_2=\cdots=X_\ell$.

\begin{definition} \label{defdualS}
  The dual $m$-symmetric Schur functions $ s_{\Lambda}^*(x;t)$
  are defined recursively in the following way.  If $\Lambda=(\pmb a;\lambda)$ is dominant, then
$$
  s_\Lambda^*(x;t)=s_\nu(X_1,\dots,X_\ell)
$$
where $\nu=\Lambda^{(0)}=\pmb a \cup \lambda$, and where
 $X_i$ stands for the alphabet $X+x_1+\cdots +x_k$ with $k$ the number of circles weakly above row $i$ in the diagram corresponding to $\Lambda$.
 Otherwise, if $a_i<a_{i+1}$, then
 \begin{equation} \label{recursis}
s^*_\Lambda(x;t)= T_i s_{\tilde \Lambda}^*(x;t)
 \end{equation}
where $\tilde \Lambda=s_i \Lambda$, which amounts to saying that
\begin{equation} \label{stsigma}
s^*_\Lambda(x;t)=  T_{\sigma^{-1}} s_{\Lambda^+}^*(x;t)
\end{equation}
where $\sigma$ is the shortest permutation such that $\sigma(\pmb a)=(a_{\sigma^{-1}(1)},\dots,a_{\sigma^{-1}(m)}) = \pmb a^+$. 
\end{definition}

\begin{example} The diagram associated to $(2,1;3,1)$ is 
$$
{\tableau[scY]{&& & \bl \\& & \bl \cercle{\rm 1} \\&  \bl \cercle{\rm 2}\\ & \bl  \\ }}
$$
from which we deduce that
   $$
s_{2,1;3,1}^*(x;t)= \left|
\begin{array}{cccc}
  h_3[X] & h_4[X] & h_5[X] & h_6[X ]\\
   h_1[X+x_1 ]& h_2[X+x_1] & h_3[X+x_1] & h_4[X+x_1] \\
   0 & h_0[X+x_1+x_2]  & h_1 [X+x_1+x_2]& h_2[X+x_1+x_2]\\
   0 & 0 & h_0[X+x_1+x_2] & h_1[X+x_1+x_2]
\end{array}   
\right| 
$$
\end{example}
\begin{remark}
The multi-Schur functions that we use in Definition~\ref{defdualS} 
are essentially flagged Schur functions \cite{LS,W} in infinite alphabets instead of finite alphabets. For instance, if $X$ were equal to $y_{1}+\cdots+y_j$, the dual $m$-symmetric Schur function $s_{2,1;3,1}^*(x;t)$ would correspond in the language of \cite{W} to the flagged Schur functions $s_{3,2,1,1}(b)$ with flags $b_1=0,b_2=1,b_3=2$ and $b_4=2$ (with the understanding that 
the variables $y_1,\dots,y_j$
would not be constrained by any of the flags).
\end{remark}
\begin{remark} \label{remarkm1} Using the connection with the flagged Schur functions mentioned in the previous remark, we can obtain a combinatorial interpretation \cite{CL} for  the expansion of $s_\Lambda^*(x;t)$ in terms of the $k_\Omega(x)$'s, where we recall that $k_\Omega(x)=k_\Omega(x;1)$. We describe this combinatorial interpretation in the case $m=1$ since we will need it in Section~\ref{secKostkaprop}.   In that case,  is not difficult to see that Proposition\ref{proprecursion} yields
$$
s_{(a_1;\lambda)}^*(x;t) = \sum_{(b_1;\mu)} k_{(b_1;\mu)}(x)=\sum_{(b_1;\mu)} k_{(b_1;\mu)}(x;t)
$$
where the sum is over all $(b_1;\mu)$'s such that $(\lambda \cup (a_1))/\mu$ is a horizontal $b_1$-strip whose cells all lie within the first $a_1$ columns.  Observe that the second equality holds since
 all $m$-partitions are dominant when $m=1$, which implies  that 
$k_\Omega(x;t)$ does not depend on $t$ in that case. 
\end{remark}

We first prove an elementary property of the dual $m$-symmetric Schur functions.
\begin{proposition} \label{propall0}
  If $\Lambda=(0,0,\dots,0; \lambda)$ then $s_\Lambda^*(x;t)=k_\Lambda(x;t)=s_\lambda(x)$.
\end{proposition}
\begin{proof}
 If $\ell$ is the length of $\lambda$, we have in this case from Definition~\ref{defdualS} that
$$
s_\Lambda^*(x;t)=s_\lambda(X_1,\dots,X_\ell)
$$
where $X=X_1=\cdots=X_\ell$ given that all the circles are below row $\ell$ in the diagram of $\Lambda$. It is then immediate that $s_\Lambda^*(x;t)=s_\lambda(x)$ from the observation after \eqref{eqmulti}.  
\end{proof}

The following recursions due to Luis Pena \cite{P} will prove useful. In the remainder of the article, $\lambda\setminus \lambda_j$ will stand for the partition obtained by removing the entry $\lambda_j$ from the partition $\lambda$.
\begin{proposition} \label{proprecursion}
  Let $\Lambda=(\pmb a;\lambda)$ be dominant. If $a_m=0$ then
  $$ s_\Lambda^*(x;t)=s_{\Lambda_-}^*(x;t)$$
  where  $\Lambda_-=(a_1,\dots,a_{m-1};\lambda)$.  Otherwise,
  $$  s_\Lambda^*(x;t) = s_{\Lambda_D}^*(x;t)+x_m s_{\Lambda_L}^*(x;t)$$
  where $\Lambda_D=(a_1,\dots,a_{m-1},\lambda_r; (\lambda \setminus \lambda_r) \cup (a_m))$, with $\lambda_r$ the largest entry in $\lambda$ strictly smaller than $a_m$ (note that $\lambda_r$ can be equal to 0), and where
    $\Lambda_L= (a_1,\dots,a_{m-1},a_m-1;\lambda)$.
\end{proposition}  
\begin{example} Using again $\Lambda=(2,1;3,1)$, we have that
$$
\Lambda_D \longleftrightarrow {\tableau[scY]{&& & \bl \\& & \bl \cercle{\rm 1} \\&  \bl \\ & \bl  \\  \bl \cercle{\rm 2}}} \qquad {\rm and} \qquad \Lambda_L \longleftrightarrow {\tableau[scY]{&& & \bl \\& & \bl \cercle{\rm 1} \\&  \bl  \\  \bl \cercle{\rm 2}}} 
$$
Hence
   $$
s_{2,1;3,1}^*(x;t)= s_{2,0;3,1,1}^*(x;t)+x_2 s_{2,0;3,1}^*(x;t)
$$
\end{example}

\begin{proof}  If $a_m=0$ then $\Lambda^{(0)}=\hat \Lambda^{(0)}$. The result is then immediate since, in the definition of the dual $m$-symmetric Schur functions,  the alphabets $X_i$  are the same for  $s_\Lambda^*(x;t)$ and $s_{\hat \Lambda}^*(x;t)$.

 We now consider the case  $a_m>0$. Let  $j$ be the row that ends with the circle $m$ in the diagram of $\Lambda$.  Observe that there are no row ending with a circle below row $j$ ($\Lambda$ is dominant).
    Suppose that $\nu_j=\nu_{j+1}=\cdots=\nu_{s}$ and that $\nu_s > \nu_{s+1}$, where $\nu=\Lambda^{(0)}$.
 It is known that \cite{MacS} 
 \begin{equation} \label{idenmS}
s_\nu(Y_1,\dots, Y_\ell)=s_\nu(Y_1,\dots,Y_{j-1},Y_{s},\dots,Y_{s},Y_s,Y_{s+1},\dots, Y_\ell)
\end{equation}
if $Y_k$ is a subalphabet of $Y_{s}$ which differs from $Y_s$ by no more than 
$s-k$ elements for all $ j \leq k \leq s$.
We have that
$X_i=X+x_1+\cdots+x_m$ for all $i \geq j$  and that $X_{j-1}=X+x_1+\cdots+x_{m-1}$.
The previous equation thus yields
\begin{align*}
  s_\nu(X_1,\dots, X_\ell)& =s_\nu(X_1,\dots,X_{j-1},X_{s},\dots,X_{s},X_s,X_{s+1},\dots, X_\ell)\\
  & =s_\nu(X_1,\dots,X_{j-1},X_{j-1},\dots,X_{j-1},X_s,X_{s+1},\dots, X_\ell)  
\end{align*}
We now use the simple identity
 \begin{equation*}
   h_i[X_s]= h_i[X_{j-1}+x_m]= h_i[X_{j-1}] +x_m  h_{i-1}[X_s]
 \end{equation*}
in row $s$ of $s_\nu(X_1,\dots, X_\ell)$ to deduce that
\begin{align*}
  s_\nu(X_1,\dots, X_\ell) & = s_\nu(X_1,\dots,X_{j-1},X_{j-1},\dots,X_{j-1},X_{j-1},X_{s+1},\dots, X_\ell) \\
& \qquad \qquad  +x_m  s_{\tilde \nu}(X_1,\dots,X_{j-1},X_{j-1},\dots,X_{j-1},X_{s},X_{s+1},\dots, X_\ell) 
\end{align*}
where  $\tilde \nu=(\nu_1,\dots,\nu_{s-1},\nu_{s}-1,\nu_{s+1},\cdots,\nu_\ell)$ is a partition by hypothesis. Setting $\lambda_r=\nu_{s+1}$, it is not too difficult to see that the previous equation then corresponds to
$s_\Lambda^*(x;t)  = s_{\Lambda_D}^*(x;t)+x_m s_{\Lambda_L}^*(x;t)$.
\end{proof}
We now establish the triangularity of the dual $m$-symmetric Schur functions in the $k_\Lambda(x;t)$ basis.
\begin{proposition} \label{propdualtriang}
For any $\Lambda=(\pmb a;\lambda)$, we have that
\begin{equation} \label{eqdefD}
s_\Lambda^*(x;t) = k_\Lambda(x;t) + \sum_{\Omega} D_{\Lambda \Omega}^*(t) k_\Omega(x;t)
\end{equation}
where all the $\Omega=(\pmb b;\mu)$'s appearing in the sum are such that $|\pmb b| < |\pmb a|$.
\end{proposition}
\begin{proof}
 We proceed by induction on the size of $\pmb a$.  From Proposition~\ref{propall0}, the result holds when $|\pmb a|=0$.  Now, suppose that $\Lambda$ is dominant.
 From Proposition~\ref{proprecursion}, we have that
\begin{equation} \label{proof1}
s_\Lambda^*(x;t) =s_{\Lambda_D}^*(x;t)+x_m s_{\Lambda_L}^*(x;t)\end{equation}
where $\Lambda_D=(a_1,\dots,a_{m-1},\lambda_r; (\lambda \setminus \lambda_r) \cup (a_m))$, with $\lambda_r$ the largest entry in $\lambda$ strictly smaller than $a_m$, and where
    $\Lambda_L= (a_1,\dots,a_{m-1},a_m-1;\lambda)$. By definition of $\lambda_r$, we have that
    $|(a_1,\dots,a_{m-1},\lambda_r)|< |\pmb a|$. Hence by induction, 
    \begin{equation}
    \label{proof2} s_{\Lambda_D}^*(x;t)
  = k_{\Lambda_D}(x;t) + \sum_{\Omega} D_{\Lambda_D \Omega}^*(t) k_\Omega(x;t)=\sum_{\Gamma} D_{\Lambda_D \Gamma}^*(t) k_\Gamma(x;t)
      \end{equation}
where all the $\Gamma=(\pmb b;\mu)$'s appearing in the last sum are such that $|\pmb b| \leq  |(a_1,\dots,a_{m-1},\lambda_r)|<|\pmb a|$. 

Since  $|(a_1,\dots,a_{m-1},a_m-1)|< |\pmb a|$, we also get by induction that \begin{equation*} 
    s_{\Lambda_L}^*(x;t)
  = k_{\Lambda_L}(x;t) + \sum_{\Omega} D_{\Lambda_L \Omega}^*(t) k_\Omega(x;t)
      \end{equation*}
where all the $\Omega=(\pmb b;\mu)$'s appearing in the sum are such that $|\pmb b| <  |(a_1,\dots,a_{m-1},a_m-1)|$. Since $\Lambda_L$ is dominant, we have that $k_{\Lambda_L}(x;t)=x^{\Lambda_L} s_\lambda(x)$, which implies that $x_m k_{\Lambda_L}(x;t)=x^\Lambda s_\lambda(x)=k_\Lambda(x;t)$. Hence, we get from the previous equation that
\begin{equation} \label{proof3}
x_m s_{\Lambda_L}^*(x;t)
=k_{\Lambda}(x;t) +x_m \sum_{\Omega} D_{\Lambda_L \Omega}^*(t) k_\Omega(x;t)
  = k_{\Lambda}(x;t) + \sum_{\Gamma} E_{\Lambda_L \Gamma}^*(t) k_\Gamma(x;t)
  \end{equation}
where all the $\Gamma=(\pmb c;\mu)$'s appearing in the sum are such that $|\pmb c| <  |(a_1,\dots,a_{m-1},a_m-1)|+1=|\pmb a|$. The result thus holds in the dominant case from \eqref{proof1}, \eqref{proof2} and \eqref{proof3}.

In the non-dominant case, we have from \eqref{stsigma} and Lemma~\ref{propoTik} that
$$s^*_\Lambda(x;t)=  T_{\sigma^{-1}} s_{\Lambda^+}^*(x;t)= T_{\sigma^{-1}}  \left(k_{\Lambda^+}(x;t) + \sum_{\Omega} D_{\Lambda^+ \Omega}^*(t) k_\Omega(x;t)   \right) = k_\Lambda(x;t) + \sum_{\Gamma} D_{\Lambda \Gamma}^*(t) k_\Gamma(x;t) $$
Since $\Lambda^+$ is dominant, we have that all the $\Omega=(\pmb b;\mu)$'sappearing in the first sum are such that $|\pmb b|<  |\pmb a|$. But this implies that all the $\Gamma=(\pmb c;\nu)$'s appearing in the second sum are also such that $|\pmb c|<  |\pmb a|$ given that the Hecke operators $T_i$ preserve the degree of a polynomial and commute with symmetric functions.

\end{proof}

The following important proposition is now immediate.
 \begin{proposition}The  dual $m$-symmetric Schur functions  form a basis of $R_m$.
\end{proposition}  
 \begin{proof} The unitriangularity  in Proposition~\ref{propdualtriang} immediately implies that the dual $m$-symmetric Schur functions  form a basis of $R_m$.
  \end{proof}  

Since the dual $m$-symmetric Schur functions do in fact form a basis, we can define the $m$-symmetric Schur functions, $s_\Lambda(x;t)$, as their dual basis (up to a power of $t$) with respect to the scalar product \eqref{scalprod}.
 \begin{definition} \label{defmschur} 
The  $m$-symmetric Schur functions $s_\Lambda(x;t)$ are defined to be such that
 $$
\langle s_\Lambda(x;t), s_\Omega^*(x;t) \rangle_m = \delta_{\Lambda \Omega} \, t^{{\rm Inv}(\pmb a) }
 $$
Equivalently, by duality, the $m$-symmetric Schur functions $s_\Lambda(x;t)$
are the unique basis of $R_m$ such that
\begin{equation} \label{eqDs}
k_{\Omega} (x;t)= s_{\Omega}(x;t)+\sum_{\Lambda} D_{\Lambda \Omega}(t) \, s_{\Lambda}(x;t)
\end{equation}
where the coefficients $D_{\Lambda \Omega}(t)$ are, up to a $t$-power, the 
 coefficients $D^*_{\Lambda \Omega}(t)$ appearing in \eqref{eqdefD}:
\begin{equation} \label{DDs}
D_{\Lambda \Omega}(t) = t^{{\rm Inv}(\pmb b)-{\rm Inv}(\pmb a)} D^*_{\Lambda \Omega}(t)
\end{equation}
 \end{definition}

\section{Properties of the (dual) $m$-symmetric Schur functions} \label{secProperties}

We will first see that the action of $T_i$ on $s_\Lambda^*(x,t)$ is exactly as its action on $k_\Lambda(x;t)$ given in Lemma~\ref{Tik}.
\begin{proposition} \label{propoTstar}
  For $i=1,\dots,m-1$, the operator $T_i$ is such that
$$
T_i  s_\Lambda^* =
\left \{ \begin{array}{ll}
      s_{\tilde \Lambda}^*  & {\rm if~} a_i > a_{i+1} \\
    (t-1)  s_{\Lambda}^* +   t\, s_{\tilde \Lambda}^*   & {\rm if~} a_i < a_{i+1} \\
     t s_{\Lambda}^*  & {\rm if~} a_i = a_{i+1}
\end{array} \right .    
$$
where $\tilde \Lambda =s_i \Lambda$.  In particular, $ s^*_\Lambda(x;t)$ is symmetric in $x_i,x_{i+1}$ if $a_i=a_{i+1}$.
\end{proposition}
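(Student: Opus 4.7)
My plan is to split the proof into three cases matching the three cases of the statement. Cases 1 and 2 (where $a_i \neq a_{i+1}$) will follow directly from the defining recursion \eqref{recursis} together with the quadratic relation on $T_i^*$. Case 3 (where $a_i = a_{i+1}$) will be established by induction on the number of inversions of $\pmb a$, bottoming out in a combinatorial identity for the coefficients $D_{\Lambda\Omega}(t)$.

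For Case 2 ($a_i > a_{i+1}$), the permutation $\tilde\Lambda = s_i\Lambda$ satisfies $\tilde a_i < \tilde a_{i+1}$, so \eqref{recursis} gives $s_{\tilde\Lambda}^* = t^{-1} T_i^* s_\Lambda^*$, and thus $T_i^* s_\Lambda^* = t\, s_{\tilde\Lambda}^*$. For Case 1 ($a_i < a_{i+1}$), I would start from $s_\Lambda^* = t^{-1} T_i^* s_{\tilde\Lambda}^*$, multiply by $T_i^*$, and expand using $(T_i^*)^2 = (t-1)T_i^* + t$; substituting $T_i^* s_{\tilde\Lambda}^* = t\, s_\Lambda^*$ (Case 2 applied to $\tilde\Lambda$) then yields $T_i^* s_\Lambda^* = (t-1)s_\Lambda^* + s_{\tilde\Lambda}^*$.

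For Case 3, I would proceed by induction on ${\rm Inv}(\pmb a)$. When $\pmb a$ is not dominant, one can pick $j\neq i$ with $a_j < a_{j+1}$ and use \eqref{recursis} at $j$ to rewrite $s_\Lambda^*$ in terms of $s_{s_j\Lambda}^*$, whose inversion count is strictly smaller. If $|j-i|\geq 2$, the commutation $T_i^* T_j^* = T_j^* T_i^*$ together with the inductive hypothesis (still applicable since positions $i,i+1$ of $s_j\pmb a$ remain equal) gives the identity at once. If $j = i\pm 1$, a second application of \eqref{recursis} expresses $s_\Lambda^*$ via $s_{s_i s_j\Lambda}^*$; the braid relation $T_i^* T_j^* T_i^* = T_j^* T_i^* T_j^*$ then lets me move $T_i^*$ inward to act on $s_{s_i s_j\Lambda}^*$ (whose repeated-entry position has shifted but whose inversion count has dropped by two), whereupon the inductive hypothesis and the already-established Case~2 finish the argument.

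The base case is $\Lambda$ dominant with $a_i = a_{i+1}$. Expanding $s_\Lambda^* = \sum_\Omega D_{\Lambda\Omega}(t)\, k_\Omega$, applying $T_i^*$ via Proposition~\ref{propoTisk}, and equating coefficients of $k_\Omega$ reduces $T_i^* s_\Lambda^* = t\, s_\Lambda^*$ to the combinatorial identity $D_{\Lambda\Omega}(t) = t\, D_{\Lambda\tilde\Omega}(t)$ whenever $b_i < b_{i+1}$ (with $\tilde\Omega = s_i\Omega$). Using the formula \eqref{chargedom} for $\charge_{\pmb a,\pmb b}$ together with ${\rm Inv}(\pmb b) = {\rm Inv}(\tilde{\pmb b}) + 1$, this further reduces to
\begin{equation*}
\sum_{\bar T \in \setS_{\Lambda\Omega}} t^{\charge(\bar T\, u_{\pmb a})} \;=\; \sum_{\bar T' \in \setS_{\Lambda\tilde\Omega}} t^{\charge(\bar T'\, u_{\pmb a})}.
\end{equation*}
The bijection I would use is the Lascoux--Sch\"utzenberger crystal reflection $\bar\sigma_{m-i}$, which preserves charge by the very definition of the statistic. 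The main obstacle, and the heart of the argument, is verifying that $\bar\sigma_{m-i}$ restricts to a well-defined bijection $\setS_{\Lambda\Omega} \to \setS_{\Lambda\tilde\Omega}$. The key input is that $a_i = a_{i+1}$ forces $u_{\pmb a}$ to contain equal multiplicities of $\bar i$ and $\overline{i+1}$ (or none at all); a parenthesis-matching analysis then shows that the letters contributed by the $u_{\pmb a}$-suffix pair entirely among themselves, so $\bar\sigma_{m-i}$ acts only on the $\bar T$-portion. Because the crystal operator merely relabels letters without moving the cells they occupy, and because $a_i = a_{i+1}$ makes the ``first $a_i$ columns'' constraint symmetric in $\bar i,\overline{i+1}$, the image $\bar\sigma_{m-i}(\bar T)$ indeed lies in $\setS_{\Lambda\tilde\Omega}$; charge invariance then closes the identity.
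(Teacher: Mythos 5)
Your proposal is correct and follows essentially the same route as the paper: the two cases $a_i\neq a_{i+1}$ via \eqref{recursis} and the quadratic relation, then induction on ${\rm Inv}(\pmb a)$ for the case $a_i=a_{i+1}$ with the same commutation/braid-relation reduction to the dominant base case, which in turn reduces to the identity $D_{\Lambda\Omega}=t\,D_{\Lambda\tilde\Omega}$. The only real difference is in that base case: you act with $\bar\sigma_{m-i}$ directly on the words $\bar T u_{\pmb a}$ and observe that the balanced block of $\bar i$'s and $\overline{i+1}$'s in $u_{\pmb a}$ self-pairs, whereas the paper transfers the bijection $\sigma_i$ on $\setSD_{\Lambda\Omega}$ across the map $\mathfrak b$ via Corollary~\ref{corocanuse}; these are two packagings of the same pairing observation, and yours arguably bypasses a little of the $\mathfrak b$, $\mathfrak i$ machinery at the cost of having to justify explicitly that $\bar\sigma_{m-i}$ preserves the shape, the occupied cells, and the first-$a_i$-columns constraint, which you do correctly since $a_i=a_{i+1}$.
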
  
\begin{proof}
  The case $a_i > a_{i+1}$ follows immediately from  \eqref{recursis}.
  In the case $a_i < a_{i+1}$, we have from  $T_i s_{\tilde \Lambda}^*=s_\Lambda^*$ and the quadratic relation satisfied by the generators
 of the  Hecke algebra that
  $$
T_i s_\Lambda^* =  T_i^2 s_{\tilde \Lambda}^*= ((t-1) T_i +t) s_{\tilde \Lambda}^*
  $$
We thus obtain, using again $T_i s_{\tilde \Lambda}^*=s_\Lambda^*$, that
$$
T_i s_\Lambda^* = (t-1) s_{\Lambda}^* + t s_{\tilde \Lambda}^*
  $$
as wanted.

The case $a_i=a_{i+1}$ is somewhat less trivial.
First consider the case when $\pmb a$ is dominant in which $s_\Lambda^*(x;t)$ is a multi-Schur function.  From
the multi-Schur property \eqref{idenmS}, we have that
$s_\Lambda^*(x;t)$ is symmetric in $x_i$ and $x_{i+1}$ whenever $a_i=a_{i+1}$. By Remark~\ref{remarksym}, we therefore have $T_i s_\Lambda^*(x;t)=t s_\Lambda^*(x;t)$ in that case.

Finally, suppose that $\pmb a$ is not dominant and that $a_i=a_{i+1}$.  We will proceed by induction on the number of inversions of $\pmb a$,
knowing that we have established the case when the number of inversions is zero.  First
suppose  that $j$ is such that $\{ j,j+1 \}$ does not intersect $\{i,i+1\}$ and such that $s_j \pmb a$ has one fewer inversion than $\pmb a$.
Using the formula $T_j s_{s_j \Lambda}^*= s_\Lambda^*$ that we have already shown to hold, we then have 
by induction that
$$
T_i  s_\Lambda^* =  T_i T_j( s_{s_j \Lambda}^*)= T_j (T_i s_{s_j \Lambda}^*)=t T_j s_{s_j \Lambda}^*=t s_\Lambda^*
$$
Now suppose that $s_{i-1} \pmb a$ has one fewer inversion than $\pmb a$.  Then $s_i s_{i-1} \pmb a$ will  have two fewer inversions  than $\pmb a$ since $a_i=a_{i+1}$.  Hence, using this time
$T_{i-1} T_i s_{s_i s_{i-1} \pmb a}^*=  s_\Lambda^*$, we have by induction that
$$
T_i  s_\Lambda^* = T_i (T_{i-1} T_i s_{s_i s_{i-1} \pmb a}^*)= T_{i-1} T_{i} (T_{i-1} s^*_{s_i s_{i-1} \pmb a})=t T_{i-1} T_{i} s_{s_i s_{i-1} \pmb a}^*= t s_\Lambda^*
$$
where we have used the braid relations obeyed by the Hecke algebra generators.
Finally, if $s_{i+1} \pmb a$ has one fewer inversion than $\pmb a$,  then
$s_i s_{i+1} \pmb a$  has again two fewer inversions than $\pmb a$
and we proceed as in the previous case.
\end{proof}
The analog of Proposition~\ref{propoTstar} holds for the 
 $m$-symmetric Schur functions.
\begin{corollary} \label{coroTons}
  For $i=1,\dots,m-1$, the operator $T_i$ is such that
$$
T_i  s_\Lambda =
\left \{ \begin{array}{ll}
     s_{\tilde \Lambda}  & {\rm if~} a_i > a_{i+1} \\
    (t-1)  s_{\Lambda} +  t s_{\tilde \Lambda}  & {\rm if~} a_i < a_{i+1} \\
     t s_{\Lambda}  & {\rm if~} a_i = a_{i+1}
\end{array} \right .    
$$
where $\tilde \Lambda=s_i \Lambda$. In particular, $ s_\Lambda(x;t)$ is symmetric in $x_i,x_{i+1}$ if $a_i=a_{i+1}$.
\end{corollary}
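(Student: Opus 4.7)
The plan is to deduce the corollary from Proposition~\ref{propoTstar} by a direct duality argument, using the fact that $\{s_\Lambda(x;t)\}$ and $\{s^*_\Omega(x;t)\}$ are dual bases of $R_m$ with respect to $\langle \cdot, \cdot \rangle_m$ (Definition~\ref{defmschur}), and that $T_i^*$ is by construction the adjoint of $T_i$ (Proposition~\ref{propoTisk}). Expanding $T_i s_\Lambda = \sum_\Gamma c_\Gamma(\Lambda)\, s_\Gamma$, each coefficient is recovered as
\[
c_\Gamma(\Lambda) = \langle T_i s_\Lambda, s^*_\Gamma \rangle_m = \langle s_\Lambda, T_i^* s^*_\Gamma \rangle_m,
\]
so the computation reduces to substituting the explicit formula for $T_i^* s^*_\Gamma$ given by Proposition~\ref{propoTstar} and reading off $\delta_{\Lambda,\Gamma}$ and $\delta_{\Lambda, s_i\Gamma}$.

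Next, I would run the case analysis on the indexing data $\pmb g$ of $\Gamma$. When $g_i > g_{i+1}$, one obtains $c_\Gamma(\Lambda) = t\,\delta_{\Lambda, s_i\Gamma}$; when $g_i < g_{i+1}$, one obtains $c_\Gamma(\Lambda) = (t-1)\,\delta_{\Lambda,\Gamma} + \delta_{\Lambda, s_i\Gamma}$; and when $g_i = g_{i+1}$, one obtains $c_\Gamma(\Lambda) = t\,\delta_{\Lambda,\Gamma}$. With these three templates in hand, matching against the three hypotheses on $\Lambda$ in the corollary is immediate. For instance, if $a_i > a_{i+1}$ then $\Lambda \neq \Gamma$ whenever $g_i < g_{i+1}$ or $g_i = g_{i+1}$, so only the $g_i < g_{i+1}$ template can contribute through the $\delta_{\Lambda, s_i\Gamma}$ term, which forces $\Gamma = s_i\Lambda = \tilde\Lambda$ with coefficient $1$. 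The case $a_i < a_{i+1}$ similarly produces a contribution $(t-1)$ from $\Gamma = \Lambda$ (via the middle template) and a contribution $t$ from $\Gamma = \tilde\Lambda$ (via the first template). The case $a_i = a_{i+1}$ collapses to $\Gamma = \Lambda$ with coefficient $t$, the nondiagonal terms being excluded because $s_i\Lambda = \Lambda$ forces $\Gamma = \Lambda$ and thus contradicts $g_i \neq g_{i+1}$.

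The only part requiring care is this bookkeeping: for each pattern of inequalities between $a_i$ and $a_{i+1}$, one must verify that exactly the advertised templates of Proposition~\ref{propoTstar} produce a nonzero pairing, and that no stray contributions slip in from the ``wrong'' template. This is the main (and really the only) obstacle, and it is purely formal. Once the three cases are checked, the symmetry claim that $s_\Lambda(x;t)$ is invariant under $x_i \leftrightarrow x_{i+1}$ when $a_i = a_{i+1}$ follows from $T_i s_\Lambda = t s_\Lambda$ exactly as in Remark~\ref{remarksym}, completing the proof.
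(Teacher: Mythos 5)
Your argument is correct and is precisely the duality computation the paper has in mind: the paper derives Corollary~\ref{coroTons} from Proposition~\ref{propoTstar} "by duality," and your expansion $c_\Gamma(\Lambda)=\langle s_\Lambda, T_i^* s_\Gamma^*\rangle_m$ together with the three-template case check (including the symmetry claim via Remark~\ref{remarksym}) simply makes that one-line justification explicit. No gaps.
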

\begin{proof}
Suppose that $a_i > a_{i+1}$. We will compute $\langle T_i s_\Lambda, s_{\Omega}^* \rangle$ for all $\Omega$.
From Definition~\ref{defmschur} and Proposition~\ref{propoTidual}, we get
that $\langle T_i s_\Lambda, s_{\Omega}^* \rangle = \langle s_\Lambda,  T_i s_{\Omega}^* \rangle = 0$ whenever  $\Omega\not \in \{ \Lambda,\tilde \Lambda\}$. When $\Omega \in \{ \Lambda,\tilde \Lambda\}$, we have again from  Definition~\ref{defmschur} and Proposition~\ref{propoTidual} that
$$
\langle T_i s_\Lambda, s_{\Lambda}^* \rangle = \langle  s_\Lambda, T_i s_{\Lambda}^* \rangle = \langle  s_\Lambda, s_{\tilde \Lambda}^* \rangle =0
$$
and that
$$
\langle T_i s_{ \Lambda}, s_{\tilde \Lambda}^* \rangle = \langle  s_\Lambda, T_i s_{\tilde \Lambda}^* \rangle = \langle  s_\Lambda, t s_{\Lambda}^* \rangle = t^{1+{\rm Inv}(\pmb a)}= t^{{\rm Inv}(s_i \pmb a)}
$$
from which we deduce that $T_i s_\Lambda = s_{\tilde \Lambda}$.  The other cases can be checked similarly. 
\end{proof}  
From Proposition~\ref{propall0}, we know explicitly $s_\Lambda^*(x;t)$ when $\Lambda=(0,0,\dots,0; \lambda)$.
There is also a family of  $m$-symmetric Schur functions for which we have explicit expressions.
\begin{proposition}  \label{propoHspecial}
  If $\Omega=(\pmb b; \emptyset)$ then $s_\Omega(x;t)=k_\Omega(x;t)=H_{\pmb b}(x_1,\dots,x_m;t)$.
\end{proposition}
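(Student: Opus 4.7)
The equality $k_\Omega(x;t) = H_{\pmb b}(x_1,\dots,x_m;t)$ follows directly from the definition $k_\Lambda(x;t) = H_{\pmb a}(x_1,\dots,x_m;t)\,s_\lambda(x)$ together with $s_\emptyset(x) = 1$. For the equality $s_\Omega(x;t) = k_\Omega(x;t)$, the plan is to invoke the defining relation $k_\Omega(x;t) = \sum_\Lambda D_{\Lambda \Omega}(t)\,s_\Lambda(x;t)$ from Definition~\ref{defmschur}, reducing the claim to showing that $D_{\Lambda \Omega}(t) = \delta_{\Lambda \Omega}$ whenever $\Omega = (\pmb b;\emptyset)$. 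By the earlier vanishing lemma ($D_{\Lambda \Omega}(t) = 0$ when $\lambda \not\subseteq \mu$), the condition $\mu = \emptyset$ forces $\lambda = \emptyset$, so only indices $\Lambda = (\pmb a;\emptyset)$ can contribute, and the task reduces to verifying $D_{(\pmb a;\emptyset),(\pmb b;\emptyset)}(t) = \delta_{\pmb a,\pmb b}$.

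I would prove this identity by induction on the number of inversions of $\pmb a$. In the base case, $\pmb a$ is dominant and Definition~\ref{defdualS} expresses the coefficient as a sum over $\bar{\mathcal S}_{\Lambda \Omega}$, which here consists of skew tableaux of shape $\emptyset/\emptyset$ in which $\bar i$ appears $a_i - b_i$ times. This set is non-empty only when $\pmb a = \pmb b$, in which case it contains precisely the empty tableau, whose charge ${\rm Inv}(\pmb b) + \charge(u_{\pmb a})$ vanishes: the first summand is $0$ because $\pmb b = \pmb a$ is dominant, and the second is $0$ because $u_{\pmb a}$, once translated via $\bar i \mapsto m+1-i$, consists of consecutive blocks of distinct letters whose values strictly decrease from left to right, so the right-to-left charge procedure identifies $1, 2, \dots, k$ in order without ever wrapping around and assigns label $0$ at every step.

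For the inductive step, fix $i$ with $a_i < a_{i+1}$ and apply the recursion of Proposition~\ref{propoD}, splitting into three subcases according to the relation between $b_i$ and $b_{i+1}$. In each subcase the inductive hypothesis replaces the right-hand side with Kronecker deltas, and the constraint $a_i < a_{i+1}$ forces the extra terms to vanish so that the surviving delta collapses to $\delta_{\Lambda \Omega}$. For instance, if $b_i > b_{i+1}$ then $\tilde\Lambda = \tilde\Omega$ would force $\pmb a = \pmb b$, hence $a_i = b_i > b_{i+1} = a_{i+1}$, contradicting $a_i < a_{i+1}$, so both $D_{\Lambda \Omega}$ and $\delta_{\Lambda \Omega}$ vanish. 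If $b_i < b_{i+1}$, the mismatch between the orderings of $(a_i, a_{i+1})$ and $(b_i, b_{i+1})$ similarly rules out $\tilde\Lambda = \Omega$, eliminating the $(1-t^{-1})D_{\tilde\Lambda,\Omega}$ term, and the surviving $\delta_{\tilde\Lambda,\tilde\Omega}$ collapses to $\delta_{\Lambda \Omega}$; the case $b_i = b_{i+1}$ is handled analogously. The only real obstacle is the bookkeeping in this case analysis; everything else is immediate from the definitions and the earlier lemmas on $D_{\Lambda \Omega}$.
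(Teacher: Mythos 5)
Your proposal is correct and follows essentially the same route as the paper's proof: induction on the number of inversions of $\pmb a$, with the dominant base case handled by observing that the relevant set of skew tableaux is empty unless $\pmb a = \pmb b$ and $\lambda=\emptyset$ (in which case the unique empty tableau has charge $0$), and the inductive step carried out via the recursion of Proposition~\ref{propoD} with the same three-way case analysis on $b_i$ versus $b_{i+1}$. The only cosmetic difference is that you invoke the vanishing lemma up front to restrict to $\Lambda=(\pmb a;\emptyset)$, whereas the paper lets the shape condition $\emptyset/\lambda$ force $\lambda=\emptyset$ inside the base case; both are valid.
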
  
\begin{proof} 
From Proposition~\ref{propdualtriang}, the coefficients $D_{\Lambda \Omega}(t)$ in \eqref{eqDs} are equal to zero if $|\pmb b| \geq |\pmb a|$. We thus have that the
coefficients $D_{\Lambda \Omega}(t)$ in \eqref{eqDs} are all equal to zero when $\Omega=(\pmb b;\emptyset)$, by the maximality of $\pmb b$. From \eqref{eqDs}, this yields $k_\Omega(x;t)=s_\Omega(x;t)$ as wanted.
\end{proof}  

Since an $m$-symmetric function is also an $(m+1)$-symmetric function, it is natural to consider the inclusion $i: R_m \to R_{m+1}$. 
It is straightforward to verify that
$$
i(p_\Omega(x)) =p_{\Omega^0}(x)  \qquad {\rm and } \qquad i(k_\Omega(x;t)) =k_{\Omega^0}(x;t) 
$$
where 
$\Omega^0=(\pmb b^0;\mu)$ with $\pmb b^0=(b_1,\dots,b_m,0)$. The restriction $r: R_{m+1} \to R_m$ is defined on the other hand as
$$
r(f)=f(x_1,\dots,x_m,0,x_{m+2},x_{m+3},\dots)  \big |_{(x_{m+2},x_{m+3},\dots) \mapsto (x_{m+1},x_{m+2},\dots) }
$$
It is again straightforward to check that
\begin{equation} \label{rH}
r(p_\Omega(x)) =
\left \{ \begin{array}{ll}
  p_{\Omega_-}(x) & {\rm if~} b_{m+1}=0 \\
  0 & {\rm otherwise}
\end{array} \right .  
\qquad {\rm and} \qquad
r(k_\Omega(x;t)) =
\left \{ \begin{array}{ll}
  k_{\Omega_-}(x;t) & {\rm if~} b_{m+1}=0 \\
  0 & {\rm otherwise}
\end{array} \right .
\end{equation}
where 
$\Omega_-=(\pmb b_-;\mu)$ with $\pmb b_-=(b_1,\dots,b_m)$.
It is then immediate that $r \circ i : R_m \to R_m$ is the identity and that
\begin{equation} \label{dualir}
\langle i(f),g \rangle_{m+1} = \langle f, r(g) \rangle_m
\end{equation}
for all $f\in R_m$ and all $g \in R_{m+1}$.
Remarkably, the inclusion and restriction of the (dual) $m$-symmetric Schur functions satisfy very elegant formulas.
\begin{proposition} For  $\Lambda =(a_1,\dots,a_m; \lambda )$, 
we have that
\begin{equation} \label{inclust}
r(s_\Lambda^*)= t^{{\rm Inv}(\pmb a)-{\rm Inv} (\hat {\pmb a}) }s_{\hat \Lambda}^*
\end{equation}
where $\hat \Lambda =(a_1,\dots,a_{m-1}; \lambda \cup (a_{m}))$.  Equivalently, by duality and \eqref{dualir}, 
\begin{equation} \label{inclus}
i(s_\Lambda) = \sum_\Omega s_\Omega
\end{equation}
where the sum is over all  $\Omega$'s such that $\Omega=(a_1,\dots,a_m,a_{m+1};\mu)$ with $\mu \cup (a_{m+1})=\lambda$.

In other words, the $m$-partition $\hat \Lambda$ in \eqref{inclust} is obtained from $\Lambda$ by removing the $m$-circle, while the $\Omega$'s that appear in \eqref{inclus} are those whose diagram can be obtained from that of $\Lambda$ by adding an $(m+1)$-circle in any symmetric row (including that of size 0).
\end{proposition}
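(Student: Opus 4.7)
I would first establish the restriction identity $r(s_\Lambda^*)=s_{\hat\Lambda}^*$; the inclusion identity \eqref{inclus} then follows by duality, since \eqref{dualir} gives
$$
\langle i(s_\Lambda),s_\Omega^*\rangle_{m+1}=\langle s_\Lambda,r(s_\Omega^*)\rangle_m=\langle s_\Lambda,s_{\hat\Omega}^*\rangle_m=\delta_{\Lambda,\hat\Omega},
$$
so that expanding $i(s_\Lambda)$ in the $\{s_\Omega\}$-basis of $R_{m+1}$ recovers exactly the sum in \eqref{inclus}. Expanding $s_\Lambda^*=\sum_\Omega D_{\Lambda\Omega}(t)k_\Omega(x;t)$ and using that $r(k_\Omega)$ equals $k_{\Omega_-}$ when $b_m=0$ and vanishes otherwise (the $q=0$ case of the stability \eqref{property1}, applied through $k_\Omega=H_{\pmb b}s_\mu$), the restriction identity reduces to the numerical statement
$$
D_{\Lambda,(\pmb c,0;\nu)}(t)=D_{\hat\Lambda,(\pmb c;\nu)}(t)\qquad\text{for every }\pmb c\text{ and }\nu,
$$
which I would prove by induction on the number of inversions of $\pmb a$.

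For the base case ($\pmb a$ dominant) both sides admit the combinatorial formula \eqref{Dcomb}, and I would produce a bijection $\setS_{\Lambda,(\pmb c,0;\nu)}\to\setS_{\hat\Lambda,(\pmb c;\nu)}$ by stripping the $\bar m$'s from $\bar T$. Since $b_m=0$ forces exactly $a_m$ copies of the smallest letter $\bar m$ into the first $a_m$ columns of $\bar T$ and strict column increase pins each to the top cell of its column, these cells are precisely the row-difference $(\lambda\cup(a_m))/\lambda$ because $(\lambda\cup(a_m))'=\lambda'+(1^{a_m})$; removal therefore yields a valid element of $\setS_{\hat\Lambda,(\pmb c;\nu)}$. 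Charge preservation follows from $\mathrm{Inv}(\pmb b)=\mathrm{Inv}(\pmb c)$ when $b_m=0$ together with Lemma~\ref{lemmaplactic2}, which lets me replace $\bar T\,u_{\pmb a}$ and $\bar T'\,u_{\hat{\pmb a}}$ by the plactically equivalent words $\bar{\mathfrak i}(\bar T)$ and $\bar{\mathfrak i}(\bar T')$; the former differs from the latter by an additional top row whose entries are all $\bar m$'s, and Lemma~\ref{lemmaplactic} lets me peel this row off within the plactic class, leaving a word whose charge agrees with that of $\bar{\mathfrak i}(\bar T')$ when computed in the alphabet $\{\bar 1,\ldots,\overline{m-1}\}$.

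For the inductive step, if $\pmb a$ has an ascent at some position $i\leq m-2$, I would apply the recursion \eqref{recur}: since $s_i$ acts entirely within positions $\leq m-1$, we have $\widehat{s_i\Lambda}=s_i\hat\Lambda$ and the zero at position $m$ is preserved by the swap, so the three cases of \eqref{recur} for $D_{\Lambda,(\pmb c,0;\nu)}$ and for $D_{\hat\Lambda,(\pmb c;\nu)}$ line up term by term and close by the inductive hypothesis. The main obstacle is the remaining case, in which the only ascent of $\pmb a$ is at $i=m-1$: here $\hat\Lambda$ is dominant but $\Lambda$ is not, and the recursion \eqref{recur} at $i=m-1$ sends $D_{\Lambda,(\pmb c,0;\nu)}$ to coefficients $D_{\tilde\Lambda,\tilde\Omega}$ with $\tilde b_m=b_{m-1}$ generically nonzero, so the inductive hypothesis does not apply directly. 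I would resolve this by computing $r\bigl(T_{m-1}^*s_{\tilde\Lambda}^*\bigr)$ term by term via the explicit $T_{m-1}^*$-action on the $k$-basis from Proposition~\ref{propoTisk}, thereby reducing the problem to two numerical identities relating coefficients $D_{\tilde\Lambda,\cdot}$ to coefficients $D_{\hat\Lambda,\cdot}$, both of whose index $m$-partitions share the same underlying partition $\pmb a\cup\lambda$. I would then prove these identities by an enhanced tableau bijection that relabels the $\bar m$'s of the $\tilde\Lambda$-tableaux (which correspond in $\tilde\Lambda$ to position $m$, value $a_{m-1}$) as $\bar{m-1}$'s in the $\hat\Lambda$-world and absorbs the $\bar{m-1}$'s of $\tilde\Lambda$ (coming from position $m-1$, value $a_m$) into the extra partition row $(a_m)$ of $\lambda\cup(a_m)$, with the $t^{-1}$ factor of the recursion accounted for by a single-unit charge shift arising from the disappearance of the inversion at positions $(m-1,m)$. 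This bookkeeping of charges and relabelings between two dominant $m$-partitions sharing the same underlying partition is where I expect the main technical work of the proof to lie.
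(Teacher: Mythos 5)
Your overall architecture matches the paper's: prove the restriction identity first, deduce \eqref{inclus} by duality via \eqref{dualir}, reduce to the coefficient identity $D_{\Lambda,(\pmb c,0;\nu)}(t)=D_{\hat\Lambda,(\pmb c;\nu)}(t)$ using \eqref{rH}, handle the dominant case by a tableau bijection, and propagate through the recursion \eqref{recur}. Your base case is sound (the $a_m$ copies of the smallest letter $\bar m$ do sit exactly on $(\lambda\cup(a_m))/\lambda$, and the charge argument via $\bar{\mathfrak i}$ and the ``one $\bar m$ per column'' principle is the same one the paper invokes), and the inductive step for an ascent at $i\leq m-2$ closes cleanly because $\widehat{s_i\Lambda}=s_i\hat\Lambda$ and the case analysis of \eqref{recur} lines up on both sides.

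The genuine gap is in the case you correctly flag as the crux, and your proposed repair does not work as stated. When the only ascent of $\pmb a$ is at $i=m-1$, you plan to prove identities relating $D_{\tilde\Lambda,\cdot}$ to $D_{\hat\Lambda,\cdot}$ ``by an enhanced tableau bijection \dots between two dominant $m$-partitions.'' But $\tilde\Lambda=s_{m-1}\Lambda$ has non-symmetric part $(a_1,\dots,a_{m-2},a_m,a_{m-1})$, which is dominant only when $a_{m-2}\geq a_m$; for $\pmb a=(2,1,5)$ one gets $\tilde{\pmb a}=(2,5,1)$, which is not dominant, so the combinatorial formula \eqref{Dcomb} is simply unavailable for $\tilde\Lambda$ and no tableau set $\setS_{\tilde\Lambda\,\cdot}$ computes $D_{\tilde\Lambda,\cdot}$. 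Even where dominance does hold, the two identities and the charge bookkeeping are only asserted, and this is exactly where all the content of the proposition lives. The paper sidesteps this by a different reduction: after using the commutation of $T^*_\sigma$ ($\sigma\in S_{m-1}$) with $r$ to assume $\hat{\pmb a}$ dominant, it inserts $a_m$ at its sorted position $\ell$ to form a genuinely dominant $\Lambda'=(\pmb a';\lambda)$ with $s^*_\Lambda=t^{-m+\ell}T^*_{m-1}\cdots T^*_\ell s^*_{\Lambda'}$, computes $r\bigl(t^{-m+\ell}T^*_{m-1}\cdots T^*_\ell H_{\pmb b}\bigr)$ in closed form (it is $t^{{\rm Inv}(\hat{\pmb b})-{\rm Inv}(\pmb b)}H_{\hat{\pmb b}}$ when $b_\ell=0$ and $0$ otherwise), and is thereby reduced to a single identity $t^{{\rm Inv}(\hat{\pmb b})-{\rm Inv}(\pmb b)}D_{\Lambda'\Omega}=D_{\hat\Lambda\hat\Omega}$ between two \emph{dominant} cases, with the distinguished zero of $\pmb b$ at position $\ell$ rather than $m$; that identity is then proved by the relabelling bijection $i\mapsto i-1$ on $\setSD$ together with a charge argument. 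If you want to keep your induction, you would need to strengthen the induction hypothesis to allow the zero entry of $\pmb b$ to sit at an arbitrary position (equivalently, to prove the paper's $\ell$-version), since the position-$m$ formulation is not stable under the recursion at $i=m-1$.
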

Before proving the proposition, we illustrate it with an example.
\begin{example} The diagram of  $\Lambda=(1,3;4,3,2)$ is 
  \begin{equation*}
 {\small{\tableau[scY]{&& &  \\& & & \bl \cercle{\rm 2}  \\& &  \\ & \\ & \bl \cercle{\rm 1}   \\}}}
  \end{equation*}
  Adding a 3-circle  in all possible ways gives
 \begin{equation*}
 {\small{\tableau[scY]{&& & &  \bl \cercle{\rm 3} \\& & & \bl \cercle{\rm 2}  \\& &  \\ & \\ & \bl \cercle{\rm 1}   \\}}} \qquad   \qquad  {\small{\tableau[scY]{&& &  \\& & & \bl \cercle{\rm 2}  \\& & &  \bl \cercle{\rm 3}\\ & \\ & \bl \cercle{\rm 1}   \\}}}  \qquad   \qquad  {\small{\tableau[scY]{&& &  \\& & & \bl \cercle{\rm 2}  \\& &  \\ & &  \bl \cercle{\rm 3} \\ & \bl \cercle{\rm 1}   \\}}}
\qquad  \qquad   {\small{\tableau[scY]{&& &  \\& & & \bl \cercle{\rm 2}  \\& &  \\ & \\ & \bl \cercle{\rm 1} \\ \bl \cercle{\rm 3}  \\}}}
 \end{equation*}
 Hence
 $$
i(s_{1,3;4,3,2})= s_{1,3,4;3,2} + s_{1,3,3;4,2} + s_{1,3,2;4,3} + s_{1,3,0;4,3,2}
 $$
  \end{example}  
\begin{proof}
  Since $T_\sigma$ commutes with the restriction $r:R_m \to R_{m-1}$ when $\sigma \in S_{m-1}$,  
  we can suppose that
$\hat {\pmb a}=(a_1,\dots,a_{m-1})$ is dominant.  In this case, 
  ${\pmb a'}=(a_1,\dots,a_{\ell-1},a_{m},a_\ell,\dots, a_{m-1})$ is also dominant, where
$a_{m}$ is such that  $a_{\ell-1} \geq a_{m}>a_\ell$.
 Thus
  $$
s_{\Lambda}^* = T_{m-1} \cdots T_{\ell} s_{\Lambda'}^*
  $$
where $\Lambda'=({\pmb a'};\lambda)$.  It is easy to deduce from \eqref{eqTi}  that
\begin{equation}
 r\bigl( T_{m-1}  f \bigr)= t f(x_1,\dots,x_{m-2},0,x_{m},x_{m+1},\dots)  \big |_{(x_{m},x_{m+1},\dots) \mapsto (x_{m-1},x_{m},\dots) }
\end{equation}  
on any $f\in R_m$, and more generally, that
\begin{equation}
 r\bigl( T_{m-1} \cdots T_\ell   f \bigr)= t^{m-\ell} f(x_1,\dots,x_{\ell-1},0,x_{\ell+1},x_{\ell+2},\dots)  \big |_{(x_{\ell+1},x_{\ell+2},\dots) \mapsto (x_{\ell},x_{\ell+1},\dots) }
\end{equation}  
Hence, given that $\Lambda'$ is dominant,
 $$
r(s_{\Lambda}^*) = r(T_{m-1} \cdots T_{\ell} s_{\Lambda'}^*)
  $$
is (up to a $t$ power) obtained by setting $x_\ell=0$ in the multi-Schur function corresponding to $s_{\Lambda'}^*$ followed by 
relabeling the variables $x_{\ell+1},x_{\ell+2},\dots$. But this simply  generates (up to a $t$ power)  the multi-Schur function associated to $s_{\hat \Lambda}^*$,
where $\hat \Lambda =(a_1,\dots,a_{m-1}; \lambda \cup (a_{m}))$. Observing that $m-\ell={\rm Inv}(\pmb a)-{\rm Inv} (\hat {\pmb a})$,  \eqref{inclust} is seen to hold.

\end{proof}

\begin{proposition} \label{propoinclusion0} For $\Lambda=(a_1,\dots,a_{m};\lambda)$, let
 $\Lambda^0=(a_1,\dots,a_{m},0;\lambda)$.  We have that
  $$
i(s^*_\Lambda)=s^*_{\Lambda^0}
$$
Equivalently, by duality,
$$
r(s_{\Lambda}) =
\left \{ \begin{array}{ll}
  0 & {\rm if~} a_{m}>0 \\
  s_{\Lambda_-} & {\rm if~}a_{m}=0
\end{array}   \right .
$$
where $\Lambda_-=(a_1,\dots,a_{m-1};\lambda)$.
\end{proposition}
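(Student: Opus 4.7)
The plan is to prove $i(s^*_\Lambda) = s^*_{\Lambda^0}$, from which the restriction formula for $s_\Lambda$ will follow by the adjointness $\langle i(f), g \rangle_{m+1} = \langle f, r(g) \rangle_m$ together with the fact that the $m$-symmetric Schur functions form a basis. First I would reduce to the case where $\pmb a$ is dominant. If $a_j < a_{j+1}$ for some $j \leq m-1$, then \eqref{recursis} gives $s^*_\Lambda = t^{-1} T^*_j s^*_{s_j\Lambda}$ in $R_m$ and $s^*_{\Lambda^0} = t^{-1} T^*_j s^*_{(s_j\Lambda)^0}$ in $R_{m+1}$. Proposition~\ref{propoTisk} shows that $T^*_j$ acts on $k_\Omega$ and $k_{\Omega^0}$ by the same formula, depending only on the entries $b_j, b_{j+1}$ (which agree in $\pmb b$ and $\pmb b^0$ for $j \leq m-1$), so the operator commutes with the inclusion; the claim for $\Lambda$ thus reduces to that for $s_j\Lambda$, which has one fewer inversion. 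Iterating brings us to dominant $\pmb a$, whence $\pmb a^0 = (\pmb a, 0)$ is automatically dominant as well.

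With $\Lambda$ dominant, expand both sides in the $k$-basis:
\[
i(s^*_\Lambda) = \sum_\Omega D_{\Lambda\Omega}(t)\,k_{\Omega^0}, \qquad s^*_{\Lambda^0} = \sum_{\Omega'} D_{\Lambda^0\Omega'}(t)\,k_{\Omega'}.
\]
The match demands two things: the vanishing $D_{\Lambda^0\Omega'}(t) = 0$ whenever $b'_{m+1} > 0$, and the equality $D_{\Lambda^0\Omega^0}(t) = D_{\Lambda\Omega}(t)$ for every $m$-partition $\Omega$. The vanishing is immediate from the earlier lemma ``$D_{\Lambda'\Omega'}(t)=0$ if $b'_\ell > a'_\ell$ for some $\ell$,'' applied with $\ell = m+1$ since $a^0_{m+1} = 0$. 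For the equality, the combinatorial formula \eqref{Dcomb} applies on both sides; the sets $\setS_{\Lambda\Omega}$ and $\setS_{\Lambda^0\Omega^0}$ literally coincide (the forced count of the letter $\overline{m+1}$ is $0$), and ${\rm Inv}(\pmb b^0) = {\rm Inv}(\pmb b)$ because appending a $0$ creates no new inversions. The remaining task is to establish $\charge(\bar T u_{\pmb a^0}) = \charge(\bar T u_{\pmb a})$ for each $\bar T$.

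From the definition of the word $u_{\pmb a}$ we have $u_{\pmb a^0} = u_{\pmb a}\,(\overline{m+1})^{a_1}$. Under the two alphabet maps $\bar i \mapsto m+1-i$ and $\bar i \mapsto m+2-i$ used to compute the charges, this identity becomes $\charge(w) = \charge(\phi(w)\cdot 1^N)$, where $w$ is the image of $\bar T u_{\pmb a}$ in the $m$-alphabet, $\phi$ shifts every letter of a word up by one, and $N = a_1 \geq a_1 - b_m$, which is the multiplicity $v_1(w)$ of the letter $1$ in $w$. This generic charge identity is the main obstacle. I would first reduce to the dominant-evaluation case via the compatibility $\sigma_{j+1}(\phi(w)\cdot 1^N) = \phi(\sigma_j w)\cdot 1^N$ (the appended $1$'s are untouched by $\sigma_{j+1}$, which acts only on letters $\geq 2$). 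Then, tracing the labeling procedure for $\phi(w)\cdot 1^N$: each of the first $v_1$ iterations begins by labeling a $1$ in the appended block with $\ell = 0$ (contributing $0$ to the charge), advances leftward through the remaining $1^N$-block while searching for a $2$ without triggering any $\ell$-increment, and on entering $\phi(w)$ from the right labels the letters $2,3,\dots,k+1$ in positions and with $\ell$-values that exactly mirror those assigned to $1,2,\dots,k$ in the corresponding iteration for $w$; any surplus iterations (the $N - v_1$ leftover $1$'s in the appended block) contribute only $\ell = 0$ labels. Summing yields the required charge equality, completing the proof.
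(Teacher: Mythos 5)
Your proposal is correct and follows essentially the same route as the paper: reduce to dominant $\pmb a$ via the commutation of $T_j^*$ with the inclusion, observe that $D_{\Lambda^0\Gamma}$ vanishes unless $\Gamma=\Omega^0$, identify the tableau sets and the inversion counts, and reduce everything to the charge identity $\charge(\bar T u_{\pmb a})=\charge(\bar T u_{\pmb a^0})$. The only difference is that you prove that charge identity (the alphabet shift plus appended $1$'s) in careful detail, including the reduction to dominant evaluation via the $\sigma_j$-compatibility, where the paper simply asserts that the trailing $\overline{m+1}$'s contribute nothing because they lie to the right of every $\bar m$.
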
  
\begin{proof}
 Since $T_\sigma$ commutes with the inclusion $i$ when $\sigma \in S_m$,
  we can suppose that
$\Lambda$ is dominant.  In this case $i(s^*_\Lambda)=s^*_{\Lambda^0}$ is immediate from Proposition~\ref{proprecursion}. 
\end{proof}

\section{Main conjecture} \label{secMain}

Our main conjecture is a positivity conjecture for the expansion of $m$-symmetric Macdonald polynomials in terms of $m$-symmetric Schur functions akin to the original Macdonald positivity conjecture \cite{M} (now theorem \cite{Haiman}).  Before stating the conjecture, we need to extend the notion of plethysm to $R_m$.
Recall that the plethysm relevant to Macdonald polynomials is the linear map on the ring of symmetric functions that sends the power-sum 
$p_\lambda$ to $p_\lambda /\prod_i (1-t^{\lambda_i})$ \cite{Ber}.  The notion of plethysm that we will need will simply be the linear map $\wp:R_m \to R_m$  defined on the $m$-symmetric power-sums as
\begin{equation} \label{plethysm}
\wp(p_\Lambda) = \frac{p_\Lambda }{\prod_i (1-t^{\lambda_i})}
\end{equation}
We stress that the plethysm $\wp$ only depends on the symmetric part $\lambda$ of $\Lambda=(\pmb a; \lambda)$.

We now introduce the proper normalization for the integral form of the $m$-symmetric Macdonald polynomial. Let
$$
c_\Lambda(q,t)= \prod_{s \in \Lambda} (1 -q^{a(s)}t^{\ell(s)+1})
$$
where the product is over the cells of $\Lambda$ (excluding the circles).
In the expression, $a(s)$ and $\ell(s)$ stand respectively  for the arm-length and leg-length of $s$. The arm-length, $a(s)$, 
is equal to the number of cells in $\Lambda$ strictly to the right of $s$ (and in the same row).  Note that if there is a circle at the end of its row, then it adds one to the arm-length of $s$. The leg-length, $\ell(s)$, is equal to the number of cells in $\Lambda$ strictly below $s$ (and in the same column). If at the bottom of its column there are $k$ circles whose filling is smaller than the filling of the circle at the end of its row, then they add $k$ to the value of the leg-length of $s$.  If the row does not end with a circle then none of the circles at the bottom of its column  contributes to the leg-length.
\begin{example}
The values of $a(s)$ and $\ell(s)$ in each cell of the diagram of  $\Lambda={(2,0,0,2;4,1,1)}$ are
$$
 {\tableau[scY]{\mbox{\scriptsize{\rm 34}}&\mbox{\scriptsize{\rm 22}}&  \mbox{\scriptsize{\rm 10}} & \mbox{\scriptsize{\rm 00}} & \bl \\\mbox{\scriptsize{\rm 23}}& \mbox{\scriptsize{\rm 11}}& \bl \cercle{\rm 1} \\\mbox{\scriptsize{\rm 24}}& \mbox{\scriptsize{\rm 10}} & \bl \cercle{\rm 4}\\ \mbox{\scriptsize{\rm 01}}& \bl  \\ \mbox{\scriptsize{\rm 00}} \\ \bl \cercle{\rm 2} \\ \bl \cercle{\rm 3}}}
 $$
which gives
$$
c_{(2,0,0,2;4,1,1)}(q,t)= (1-t)(1-qt)(1-q^2t^3)(1-q^3t^5)(1-qt^2)(1-q^2t^4)(1-qt)(1-q^2t^5)(1-t^2)(1-t) 
$$
\end{example}
Let the integral form of the $m$-symmetric Macdonald polynomials be defined as\footnote{The integral form of the ``partially symmetric Macdonald polynomials'' in \cite{BG,BG2} is the same as our integral form.}
$$
J_\Lambda(x;q,t)= c_\Lambda(q,t) P_\Lambda(x;q,t)
$$
We are finally ready to state our positivity conjecture for the $m$-Macdonald polynomials.  
\begin{conjecture} \label{mainconjec}
  The $m$-symmetric Macdonald polynomials in their integral form are such that
  \begin{equation} \label{Kostka}
  \wp\bigl(J_\Lambda(x;q,t)\bigr)=
  \sum_{\Omega} K_{\Omega \Lambda}(q,t) \, s_\Omega(x;t)
  \end{equation}
with $K_{\Omega \Lambda}(q,t) \in \mathbb N[q,t]$.
\end{conjecture}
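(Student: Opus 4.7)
The plan is to proceed by induction on $m$. The base case $m=0$ is precisely Haiman's theorem on the positivity of the usual $(q,t)$-Kostka coefficients, which is available via the correspondence $K_{(0^m;\mu),(0^m;\lambda)}(q,t)=K_{\mu\lambda}(q,t)$ mentioned in the introduction. For the inductive step, the natural tools are the inclusion/restriction formulas already established for the $m$-symmetric Schur functions (Proposition \ref{propoinclusion0} and the one preceding it): one needs a companion formula expressing how $\wp(J_\Lambda)$ behaves under $i:R_m\to R_{m+1}$ and $r:R_{m+1}\to R_m$. Since $\wp$ acts on power-sums by a factor depending only on the symmetric part of $\Lambda$, and since $i$ and $r$ commute with $\wp$, combining such a formula with the inductive hypothesis would transfer positivity from the $m$-level to a sub-family at the $(m+1)$-level.

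To reach arbitrary $\Lambda=(\pmb a;\lambda)$ at the $(m+1)$-level, the strategy is to first establish positivity when $\pmb a$ is dominant and then propagate it through the Hecke generators $T_1,\dots,T_m$. By Corollary \ref{coroTons}, $T_i$ acts on $s_\Omega$ with coefficients in $\mathbb N[t]$, so it is enough to check that $T_i$ acts on $\wp(J_\Lambda)$ by an identical positive formula. This reduces, via the definition $P_\Lambda\propto \mathcal S^t_{m,N}E_{\eta_{\Lambda,N}}$ and the commutation of $T_i$ with $\wp$, to the transformation rule \eqref{property2} for non-symmetric Macdonald polynomials. The verification is nontrivial only when $a_i<a_{i+1}$, where one must carefully track how the rational function $(t-1)/(1-\delta_{i,\eta}^{-1})$ combines with the normalization constants $c_\Lambda(q,t)$ and $u_{\Lambda,N}(t)$ to produce the right positivity pattern.

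For the dominant case, a promising route is to adapt Haiman's geometric framework: the $m$-symmetric setting corresponds to a partial, rather than full, $t$-symmetrization, which suggests replacing the Hilbert scheme of points in the plane with a parabolic analogue attached to the composition $\pmb a$, and the diagonal coinvariant ring with a bigraded module whose Frobenius-type character in $R_m$ coincides with $\wp(J_\Lambda)$. Positivity of $K_{\Omega\Lambda}(q,t)$ would then follow from a bigraded decomposition indexed by the basis $s_\Omega(x;t)$. A more modest first step, and likely a prerequisite, is to prove the weaker integrality statement $D_{\Lambda\Omega}(t)\in\mathbb N[t]$ suggested in Remark \ref{remarkDef}, together with $K_{\Omega\Lambda}(q,t)\in\mathbb Z[q,t]$, by exploiting the recursion of Proposition \ref{propoD} and a suitable reorganization of negative terms.

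The decisive obstacle is the dominant case: producing the right representation-theoretic or geometric model of $\wp(J_\Lambda)$ in the $m$-symmetric setting is, by itself, a genuine generalization of Haiman's theorem, and the explicit $t$-dependence of the basis $s_\Lambda(x;t)$ creates complications absent in the $m=0$ case. A pragmatic intermediate goal, echoing the forthcoming $t=1$ result in \cite{CL} that $K_{\Omega\Lambda}(1,1)$ counts standard tableaux of shape $\pmb b\cup\mu$, would be to find a charge-like statistic on those tableaux whose generating function specializes to $K_{\Omega\Lambda}(q,t)$, thereby giving a combinatorial proof of positivity that bypasses the geometry entirely.
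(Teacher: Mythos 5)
The statement you are trying to prove is Conjecture~\ref{mainconjec}, the main open conjecture of the paper; the paper offers no proof of it, only supporting evidence (the relations of Proposition~\ref{propokostka}, the special cases $m=0$ and $t=1$, and computer experiments). So there is no proof to compare against, and your proposal is, by your own admission, a research program rather than a proof: the ``decisive obstacle'' you name at the end (a Haiman-type geometric or representation-theoretic model for the dominant case) is left entirely unresolved, and without it nothing is established.

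Beyond that, the intermediate steps of your outline contain concrete gaps. The propagation through the Hecke generators does not work as you describe: while Corollary~\ref{coroTons} gives $T_i s_\Omega$ with coefficients in $\mathbb N[t]$, the action of $T_i$ on $J_\Lambda$ (see equation~\eqref{eqact} in the proof of Proposition~\ref{propokostka}(5)) has coefficients $\bigl(\tfrac{t-1}{1-q^{-a}t^{-\ell-1}}\bigr)$ and $\bigl(\tfrac{1-q^a t^{\ell}}{1-q^a t^{\ell+1}}\bigr)$, which are rational functions in $q,t$ and not positive; the resulting relation between $K_{\Omega\Lambda}$ and $K_{\tilde\Omega\tilde\Lambda}$ is exactly the content of Conjectures~\ref{conjecf2} and~\ref{conjecf3}, which the paper leaves open. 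So positivity cannot be transferred from dominant $\pmb a$ to general $\pmb a$ by this route without first proving those conjectures. The induction on $m$ via inclusion/restriction is similarly insufficient: Proposition~\ref{proporJ} only relates $J_\Lambda$ at level $m+1$ to a \emph{specific} $J_{\hat\Lambda}$ at level $m$ (and conversely item (1) of Proposition~\ref{propokostka} moves a symmetric entry to the non-symmetric side, increasing $m$), while the inclusion formula of Proposition~\ref{propoiJ} is only available when $\lambda=\emptyset$; these relations constrain the coefficients but do not generate all $(m+1)$-level cases from $m$-level ones. Finally, even the weaker statement $D_{\Lambda\Omega}(t)\in\mathbb N[t]$ is flagged in Remark~\ref{remarkDef} as only experimentally supported. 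In short, the statement remains a conjecture and your proposal does not close any of the gaps that make it one.
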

It is noteworthy that, as we will see in Corollary~\ref{corospecial},  the usual $(q,t)$-Kostka coefficients $K_{\mu \lambda}(q,t)$ are special cases of the coefficients $K_{\Omega \Lambda}(q,t)$.
 \begin{example} \label{expos}We have
\begin{align*}
  \wp (J_{1,0;2})& = t^2 s_{3,0;\emptyset} + qt^2 s_{0,3;\emptyset}+ q t \, s_{0,0;3} +(qt^3+t) s_{2,1;\emptyset}+(qt^2+t) s_{2,0;1}
  +(q^2t^3+t) s_{1,2;\emptyset} \\  & \qquad +(q^2t^2+1) s_{1,0;2}+(q^2t^2+qt) s_{0,2;1}+(q^2t^2+q) s_{0,1;2}+(q^2t+q) s_{0,0;2,1} \\
  & \qquad \qquad + qt^2 s_{1,1;1}+ qt \, s_{1,0;1,1} + q^2 t \, s_{0,1;1,1}+q^2s_{0,0;1,1,1}
\end{align*}
\end{example} 
 \begin{remark}
   The symmetries  $K_{\mu \lambda}(q,t)=K_{\mu' \lambda'}(t,q)$ and $K_{\mu \lambda}(q,t)=q^{n(\lambda')}t^{n(\lambda)} K_{\mu' \lambda}(q^{-1},t^{-1})$ (where $n(\lambda)=\lambda_2+2\lambda_3+3\lambda_4+\cdots$) of the usual $(q,t)$-Kostka coefficients do not extend to the $m$-symmetric world (except in the case $m=1$). It seems however that  $K_{\Omega \Lambda}(q,t)=K_{\Omega' \Lambda'}(t,q)$ whenever $\Omega'$ and $\Lambda'$ both exist, where $\Lambda'$ is the $m$-partition whose diagram is the conjugate of the diagram of $\Lambda$.
\end{remark}
\begin{remark} \label{remarkstandard}
The special case $t=1$  will be proven in \cite{CL} using a combinatorial interpretation quite similar to the major index statistic on tableaux (note that this does not provide a formula for the $q=1$ case since, as commented in the previous remark,
  the symmetry  $K_{\mu \lambda}(q,t)=K_{\mu' \lambda'}(t,q)$ of the usual $(q,t)$-Kostka coefficients does not extend to the $m$-symmetric world).  One important consequence of the combinatorial interpretation at $t=1$ worth mentioning is that, as can be appreciated in Example~\ref{expos}, 
$$
K_{\Omega \Lambda}(1,1) = \# \text{ of standard tableaux of shape } \mu \cup \pmb b
$$
Finding a combinatorial interpretation for the  coefficients $K_{\Omega \Lambda}(q,t)$
thus amounts to finding a statistic on standard tableaux with extra circles.
\end{remark}

The next section will be devoted to proving properties of the $K_{\Omega \Lambda}(q,t)$ coefficients.

\section{The $K_{\Omega \Lambda}(q,t)$ coefficients} \label{secKostkaprop}
Before describing a few elementary relations satisfied by the $K_{\Omega \Lambda}(q,t)$ coefficients, we first need to establish certain results on the inclusion and restriction of $m$-symmetric Macdonald polynomials.  The proofs will rely on properties of the $m$-symmetric Macdonald polynomials that are proven in Appendix~\ref{secAppendix}.
We start with the restriction, which  can be obtained in general and which is surprisingly simple in the integral form.

\begin{proposition} \label{proporJ}   For $\Lambda=(a_1,\dots,a_m,a_{m+1} ; \lambda )$,
  the restriction $r:R_{m+1}\to R_m$ is such that
$$  
r ( J_{\Lambda}(x;q,t)) = q^{a_{m+1}} t^{\# \{ i \, | \, a_i < a_{m+1}\} }  J_{ \hat \Lambda} (x;q,t)
$$
where $\hat \Lambda=(a_1,\dots,a_m;\lambda \cup (a_{m+1}))$.
\end{proposition}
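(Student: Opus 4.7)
The plan is to work in $N$ variables for $N$ large enough that the $m$-symmetric Macdonald polynomials are stable, and to exploit that $\mathcal S^t_{m+1,N}$ acts only on $x_{m+2},\dots,x_N$ and therefore commutes with the substitution $x_{m+1}=0$. Starting from
$$ J_\Lambda(x;q,t)=\frac{c_\Lambda(q,t)}{u_{\Lambda,N}(t)}\,\mathcal S^t_{m+1,N}\,E_{\eta_{\Lambda,N}}(x;q,t),$$
the restriction reduces to computing $E_{\eta_{\Lambda,N}}|_{x_{m+1}=0}$ and then re-applying $\mathcal S^t_{m+1,N}$.

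The first and main step is to identify $E_{\eta_{\Lambda,N}}(x;q,t)\big|_{x_{m+1}=0}$, up to an explicit scalar $\alpha(q,t)$, with a non-symmetric Macdonald polynomial indexed by the composition $\eta_{\hat\Lambda,N-1}$ (after the relabelling $x_{m+2}\mapsto x_{m+1},\dots$). In $\eta_{\Lambda,N}$ the entry $a_{m+1}$ sits in position $m+1$, while in $\eta_{\hat\Lambda,N-1}$ it must be placed in its proper slot among the reverse-sorted parts of $\lambda\cup(a_{m+1})$. To move it there I would repeatedly apply \eqref{property2}, swapping $a_{m+1}$ rightwards past the string of $0$'s and then past the appropriate entries from the reverse-sorted tail of $\lambda$: each swap contributes either a clean proportionality factor (when $a_{m+1}$ crosses a strictly smaller entry to its right) or a controlled two-term relation. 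Once a $0$ has been brought into position $m+1$, the stability formula \eqref{property1} strips that $0$ and yields the desired $E_{\eta_{\hat\Lambda,N-1}}$. The intertwiner $\Phi_q$ of \eqref{eqPhi} offers an alternative bookkeeping device for tracking the factor $\alpha(q,t)$.

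The second step is a direct transfer of the symmetrizer: under the relabelling, $\mathcal S^t_{m+1,N}$ acting on $x_{m+2},\dots,x_N$ becomes exactly $\mathcal S^t_{m,N-1}$ acting on $x_{m+1},\dots,x_{N-1}$, matching the defining formula for $P_{\hat\Lambda}(x_1,\dots,x_{N-1};q,t)$. Combining the two steps yields
$$ r\bigl(J_\Lambda(x;q,t)\bigr)=\alpha(q,t)\,\frac{c_\Lambda(q,t)\,u_{\hat\Lambda,N-1}(t)}{c_{\hat\Lambda}(q,t)\,u_{\Lambda,N}(t)}\,J_{\hat\Lambda}(x;q,t).$$

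The final step is to verify that the overall scalar equals $q^{a_{m+1}}\,t^{\#\{i\,:\,a_i<a_{m+1}\}}$. One route is to compute the three factors separately: $c_\Lambda/c_{\hat\Lambda}$ comes from the change of arm- and leg-lengths produced by deleting the $(m{+}1)$-circle, and $u_{\hat\Lambda,N-1}/u_{\Lambda,N}$ is a quotient of $t$-factorials depending only on the multiplicities of $a_{m+1}$ in $\lambda$ and in $\lambda\cup(a_{m+1})$. A slicker alternative, which sidesteps an explicit evaluation of $\alpha(q,t)$, is to pick a convenient monomial---for instance $x^{\pmb a}\,m_{\lambda\cup(a_{m+1})}$---and extract its coefficient on both sides using the unitriangularity \eqref{unitriang}; on the right-hand side this coefficient is $1$, so the scalar is forced. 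The main obstacle, and the only step that is not routine, is controlling $\alpha(q,t)$ in the first step: it is precisely the kind of computation that the appendix is designed to absorb, and I would reduce the problem to the appendix's technical lemmas on non-symmetric Macdonald polynomials rather than attempt the full chain of swaps by hand.
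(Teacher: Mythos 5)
Your overall strategy is the one the paper actually uses: it commutes the substitution $x_{m+1}=0$ past $\mathcal S^t_{m+1,N}$, reduces to a restriction statement for the symmetrized non-symmetric Macdonald polynomial, and delegates the scalar bookkeeping to the appendix (Propositions~\ref{propMacdo00} and \ref{propMacdo0}). Two specific steps of your plan, however, would fail if executed as written. First, your ``main step'' asserts that $E_{\eta_{\Lambda,N}}\big|_{x_{m+1}=0}$ is, up to a scalar $\alpha(q,t)$, a single non-symmetric Macdonald polynomial. That is false in general: every case of \eqref{property2} with $\eta_i\neq\eta_{i+1}$ is a genuine two-term relation, and the clean one-term proportionality for a swap (Lemma~\ref{lemmasymrel}) only holds \emph{after} applying $\mathcal S^t_{m+1,N}$, using $\mathcal S^t_{m+1,N}T_i=t\,\mathcal S^t_{m+1,N}$. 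The same is true of the restriction itself: the paper's key identity \eqref{bigresult} is a statement about $\mathcal S^t_{m+1,N}E_\eta\big|_{x_{m+1}=0}$, not about $E_\eta\big|_{x_{m+1}=0}$, and its proof for $n>1$ explicitly requires carrying the symmetrizer from the start (only for $n=1$ does the bare restriction collapse to one term). So the correct formulation of your first step is at the level of $\mathcal S^t_{m+1,N}E_{\eta_{\Lambda,N}}$ throughout; stripping the zero then needs Lemma~\ref{Camilo} combined with the symmetry of $E_\eta$ in the variables of the zero block, not \eqref{property1} alone, since the zero being removed is not in the last position.

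Second, the ``slicker alternative'' for pinning down the scalar does not work. The coefficient of $m_{\hat\Lambda}$ in $J_{\hat\Lambda}$ is $c_{\hat\Lambda}(q,t)$, not $1$; more importantly, the corresponding coefficient on the left-hand side is the coefficient of $m_{(a_1,\dots,a_m,0;\lambda\cup(a_{m+1}))}$ in $P_\Lambda$, which is a \emph{non-leading} coefficient $d_{\Lambda\Omega}$ not supplied by the unitriangularity \eqref{unitriang}. So the scalar is not ``forced'' by comparing leading terms; it genuinely has to be computed by tracking the factors through the chain of swaps, which is precisely the content of the product formula established in the proof of Proposition~\ref{propMacdo0} (the ratio $c_{\Lambda^\diamond}/c_\Lambda$, the ratio of normalizations $u_{\Lambda,N-1}/u_{\Lambda^\diamond,N}$, and the accumulated swap factor $f_\Lambda(q,t)$ must be shown to combine to $q^{a_{m+1}}t^{\#\{i\,\mid\,a_i<a_{m+1}\}}$). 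Deferring that computation to the appendix is legitimate, but the proposal should not suggest there is a shortcut around it.
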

\begin{proof}  First suppose that $a_{m+1}=0$.  Letting $N\to \infty$ in Proposition~\ref{propMacdo00}, we have in this case that 
  $$
J_\Lambda(x;q,t) \Big |_{x_{m+1}=0} = J_{\hat \Lambda}(x_{(m+1)};q,t) 
$$
where $x_{(m+1)}=(x_1,\dots,x_m,x_{m+2},x_{m+3},\dots)$.
After relabelling the variables as $(x_{m+2},x_{m+3},\dots) \mapsto (x_{m+1},x_{m+2},\dots)$ we obtain $r ( J_{\Lambda}(x;q,t)) =  J_{ \hat \Lambda} (x;q,t)$ as wanted.

If $a_{m+1}\neq 0$, letting this time $N\to \infty$ in Proposition~\ref{propMacdo0}, we get that
$$
J_\Lambda(x;q,t) \Big |_{x_{m+1}=0} = q^{a_{m+1}} t^{\# \{ i \, | \, a_i < a_{m+1}\} }  J_{\hat \Lambda}(x_{(m+1)};q,t) 
$$
which gives that $r(J_\Lambda(x;q,t)) = q^{a_{m+1}} t^{\# \{ i \, | \, a_i < a_{m+1}\} }  J_{\hat \Lambda}(x;q,t)$ after relabelling the variables. 
\end{proof}  
The inclusion of an $m$-symmetric  Macdonald polynomial is also quite simple when the symmetric part of the indexing $m$-partition is empty.
\begin{proposition} \label{propoiJ}
If $\Lambda=(a_1,\dots,a_m;\emptyset)$, then the inclusion $i:R_m \to R_{m+1}$ is such that
  $$
i(J_\Lambda(x;q,t))= J_{\Lambda^0}(x;q,t) 
$$
where $\Lambda^0=(a_1,\dots,a_m,0;\emptyset)$.
\end{proposition}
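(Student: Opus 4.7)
The plan is to establish the identity $J_\Lambda(x_1,\dots,x_N;q,t) = J_{\Lambda^0}(x_1,\dots,x_N;q,t)$ for every sufficiently large $N$, after which the infinite-variable statement follows from the stability provided by Corollary~\ref{coroinfinite}, since $i$ is merely the tautological embedding $R_m\hookrightarrow R_{m+1}$. First I would observe that $\eta_{\Lambda,N}$ and $\eta_{\Lambda^0,N}$ coincide: both equal the composition $\eta=(a_1,\dots,a_m,0^{N-m})$, whose trailing entries $\eta_{m+1}=\cdots=\eta_N=0$ are all equal. Iterating Remark~\ref{remarksym} then shows that $E_\eta$ is already symmetric in $x_{m+1},\dots,x_N$, so $T_i E_\eta = t E_\eta$ for every $i=m+1,\dots,N-1$; consequently $\mathcal{S}^t_{m,N} E_\eta = [N-m]_t\, E_\eta$ and $\mathcal{S}^t_{m+1,N} E_\eta = [N-m-1]_t\, E_\eta$, where $[k]_t=\sum_{\sigma\in S_k} t^{\ell(\sigma)}$ is the Poincar\'e polynomial of $S_k$.

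Next I would reconcile the two normalizations. Substituting $\lambda=\emptyset$ into \eqref{normalization} removes every $[n_\lambda(i)]_{t^{-1}}!$ factor, leaving $u_{\Lambda,N}(t) = [N-m]_{t^{-1}}\cdot t^{(N-m)(N-m-1)/2}$; the elementary identity $[k]_{t^{-1}} = t^{-k(k-1)/2}[k]_t$ (obtained by pulling out a factor of $t^{1-i}$ from each term $(1-t^{-i})/(1-t^{-1})$) collapses this to $u_{\Lambda,N}(t) = [N-m]_t$, and analogously $u_{\Lambda^0,N}(t) = [N-m-1]_t$. Hence
$P_\Lambda(x_1,\dots,x_N;q,t) = E_\eta(x_1,\dots,x_N;q,t) = P_{\Lambda^0}(x_1,\dots,x_N;q,t)$.

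Finally I would verify $c_\Lambda(q,t) = c_{\Lambda^0}(q,t)$. The diagram of $\Lambda^0$ differs from that of $\Lambda$ only by one extra $(m+1)$-circle placed in a new row of size zero at the very bottom; no cell is added, so every arm-length is unchanged. For the leg-length, the new circle sits in column~$1$ below the pre-existing cells, but its filling $m+1$ is strictly larger than the filling of every $i$-circle with $i\leq m$ (i.e.\ every circle ending a row that contains cells), so it contributes $0$ to any leg-length. Combining these, $J_\Lambda = c_\Lambda P_\Lambda = c_{\Lambda^0} P_{\Lambda^0} = J_{\Lambda^0}$ as polynomials in $N$ variables for all $N>m+1$, and letting $N\to\infty$ through Corollary~\ref{coroinfinite} gives $i(J_\Lambda)=J_{\Lambda^0}$ in $R_{m+1}$. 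There is no genuine obstacle: everything reduces to the observation that $E_\eta$ is already fully symmetric in the block being symmetrized, together with two direct bookkeeping checks (the $u_{\Lambda,N}$ simplification and the invariance of arm/leg-lengths).
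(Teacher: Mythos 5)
Your proof is correct and follows essentially the same route as the paper: both reduce to the facts that $c_\Lambda=c_{\Lambda^0}$ (the new $(m+1)$-circle never contributes to a leg-length) and that $E_{(a_1,\dots,a_m,0^{N-m})}$ is already fully symmetric in $x_{m+1},\dots,x_N$, so that the two $t$-symmetrizations and the two normalization constants match up. The only cosmetic difference is that you evaluate $\mathcal S^t_{m,N}E_\eta$ and $\mathcal S^t_{m+1,N}E_\eta$ outright as Poincar\'e-polynomial multiples of $E_\eta$ and simplify both $u$'s to $t$-factorials (so that $P_\Lambda=E_\eta=P_{\Lambda^0}$), whereas the paper relates the two symmetrizations through the factorization \eqref{extraN2} and the single ratio $(1-t^{N-m})/(1-t)=u_{\Lambda,N}(t)/u_{\Lambda^0,N}(t)$.
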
  
\begin{proof}
  We need to prove that  $i(c_{\Lambda}(q,t) P_{\Lambda}(x;q,t))=c_{\Lambda^0}(q,t) P_{\Lambda^0}(x;q,t)$. It is immediate that $c_{\Lambda^0}(q,t)=c_{\Lambda}(q,t)$ since the only difference between the two diagrams is the extra circle $m+1$ at the bottom of the diagram of $\Lambda^0$. But this circle $m+1$ is larger than every other circle and therefore never contributes to a leg-length.  We thus have to prove that  $i(P_{\Lambda}(x;q,t))= P_{\Lambda^0}(x;q,t)$.  We will show that
  $P_{\Lambda^0}(x;q,t)$ and $P_{\Lambda}(x;q,t)$ are equal as polynomials in $N$ variables, which will prove the proposition.

  We have
$$
P_{\Lambda}(x;q,t)=\frac{1}{u_{\Lambda}}\mathcal S_{m+1,N}^t E_{a_1,\dots,a_m,0^{N-m}}= 
\frac{1}{u_{\Lambda}}\mathcal S_{m+2,N}^t \mathcal O_{m+1} E_{a_1,\dots,a_m,0^{N-m}}
$$
where
$$\mathcal O_{m+1} = 1+T_{m+1}+T_{m+1}T_{m+2} + \cdots+ T_{m+1} T_{m+2} \cdots T_{N-1}
$$  Since $E_{a_1,\dots,a_m,0^{N-m}}$ is symmetric in the variables $x_{m+1},\dots, x_{N}$, we can easily check that
$\mathcal O_{m+1} E_{a_1,\dots,a_m,0^{N-m}}=(1-t^{N-m})/(1-t) E_{a_1,\dots,a_m,0^{N-m}}$, which implies that
$$
P_{\Lambda}(x;q,t)= 
\frac{1}{u_{\Lambda}(t)}\left( \frac{1-t^{N-m}}{1-t}\right)\mathcal S_{m+2}^t E_{a_1,\dots,a_m,0^{N-m}} = \frac{u_{\Lambda^0}(t)}{u_{\Lambda}(t)}\left( \frac{1-t^{N-m}}{1-t}\right) P_{\Lambda^0}(x;q,t)
$$
But $u_{\Lambda}(t)/u_{\Lambda^0}(t)=(1-t^{N-m})/(1-t)$ due to the extra 0 in the symmetric entries of $\Lambda$, which gives that, in $N$ variables, $P_{\Lambda^0}(x;q,t)=P_{\Lambda}(x;q,t)$.
\end{proof}

In order to better visualize the following theorem, it will prove convenient to denote, for $\Lambda=(\pmb a;\lambda)$ and $\Omega=(\pmb b;\mu)$,
the coefficient $K_{\Omega \Lambda} (q,t)$ as
$$
\left( \begin{array}{cccccc}
  a_1 & a_2 & \cdots & a_m & \vline &\lambda \\
   b_1 & b_2 & \cdots & b_m & \vline & \mu
\end{array}
\right)
$$
Recall that
$\lambda\setminus \lambda_j$ stand for the partition obtained by removing the entry $\lambda_j$ from the partition $\lambda$.
\begin{theorem} \label{propokostka}
  The  coefficients  $K_{\Omega \Lambda} (q,t)$ satisfy the following relations.
\begin{enumerate}
\item For any entry $\lambda_j$ in $\lambda$, we have
  $$
 q^{\lambda_j}t^{\# \{ i \, | \,  a_i < \lambda_j \}}
\left( \begin{array}{cccccc}
  a_1 & a_2 & \cdots & a_m & \vline &\lambda \\
   b_1 & b_2 & \cdots & b_m & \vline & \mu
\end{array}
\right)= \left( \begin{array}{ccccccc}
  a_1 & a_2 & \cdots & a_m & \lambda_j & \vline &\lambda \setminus \lambda_j \\
   b_1 & b_2 & \cdots & b_m & 0 & \vline & \mu
\end{array}
\right)
$$
\item
 $$
\left( \begin{array}{cccccc}
  a_1 & a_2 & \cdots & a_m & \vline &\lambda \\
   b_1 & b_2 & \cdots & b_m & \vline & \mu
\end{array}
\right)= \left( \begin{array}{ccccccc}
  a_1 & a_2 & \cdots & a_m & 0 & \vline &\lambda  \\
   b_1 & b_2 & \cdots & b_m & 0 & \vline & \mu
\end{array}
\right)
$$
\item For any entry $\mu_j$ in $\mu$, we have
$$
\left( \begin{array}{cccccc}
  a_1 & a_2 & \cdots & a_m & \vline &\emptyset \\
   b_1 & b_2 & \cdots & b_m & \vline & \mu 
\end{array}
\right)= \left( \begin{array}{ccccccc}
  a_1 & a_2 & \cdots & a_m & 0 & \vline &\emptyset \\
   b_1 & b_2 & \cdots & b_m & \mu_j & \vline & \mu \setminus \mu_j
\end{array}
\right)
$$
\item If $a_{i+1}=a_i$ then
$$
\left( \begin{array}{cccccccc}
  a_1 & \cdots& a_i & a_i & \cdots & a_m & \vline &\lambda \\
   b_1 & \cdots& b_i & b_{i+1} & \cdots & b_m & \vline & \mu
\end{array}
\right)= \left( \begin{array}{cccccccc}
  a_1 & \cdots& a_i & a_i & \cdots & a_m & \vline &\lambda \\
   b_1 & \cdots& b_{i+1} & b_{i} & \cdots & b_m & \vline & \mu
\end{array}
\right) 
$$
\item If $b_{i+1}=b_i$ and $a_i > a_{i+1}$ then
$$
t \left( \begin{array}{cccccccc}
  a_1 & \cdots& a_{i} & a_{i+1} & \cdots & a_m & \vline &\lambda \\
   b_1 & \cdots& b_i & b_{i} & \cdots & b_m & \vline & \mu
\end{array}
\right)= \left( \begin{array}{cccccccc}
  a_1 & \cdots& a_{i+1} & a_i & \cdots & a_m & \vline &\lambda \\
   b_1 & \cdots& b_{i} & b_{i} & \cdots & b_m & \vline & \mu
\end{array}
\right) 
$$
\end{enumerate}
\end{theorem}
\begin{proof}
  We will see that (1) is an easy consequence of Propositions~\ref{propoinclusion0} and \ref{proporJ}.  Applying $r$ on both sides of \eqref{Kostka}, we get
  $$
r (\wp (J_{\Lambda^\diamond}(x;q,t))= \sum_{\Delta} K_{\Delta \Lambda^\diamond} r(s_\Delta (x;t))
$$
where  $\Lambda^\diamond=(a_1,\dots,a_m,\lambda_1; \lambda \setminus \lambda_1)$.
Since the plethysm $\wp$ commutes with $r$, we have from Propositions~\ref{proporJ} that
\begin{equation}  \label{eqr1}
r (\wp (J_{\Lambda^\diamond}(x;q,t))=q^{\lambda_1}t^{\# \{ i \, | \,  a_i < \lambda_1 \}} \wp (J_{\Lambda}(x;q,t)
= q^{\lambda_1}t^{\# \{ i \, | \,  a_i < \lambda_1 \}}  \sum_{\Omega} K_{\Omega \Lambda} s_\Omega (x;t)
\end{equation}
and from Proposition~\ref{propoinclusion0} that
\begin{equation} \label{eqr2}
 \sum_{\Delta} K_{\Delta \Lambda^\diamond} r(s_\Delta (x;t))
= \sum_{\Omega} K_{\Omega^0 \Lambda^\diamond} s_{\Omega} (x;t)
\end{equation}
where $\Omega^0=(b_1,\dots,b_m,0;\mu)$.
Equating \eqref{eqr1} and \eqref{eqr2}, we obtain that
$q^{\lambda_1}t^{\# \{ i \, | \,  a_i < \lambda_1 \}} K_{\Omega \Lambda}(q,t)= K_{\Omega^0 \Lambda^\diamond}(q,t)$ as wanted.

Statement (2) is statement (1) with $\lambda_j=0$.  The proof is the same as the proof of statement (1).

For (3), let $\Lambda=(a_1,\dots,a_m;\emptyset)$.   Using  Proposition~\ref{propoiJ} and the commutativity of $i$ and $\wp$, we have
$$
\sum_\Gamma K_{\Gamma \Lambda}(q,t) i(s_\Gamma(x;t))=i(\wp(J_\Lambda(x;q,t))=\wp(J_{\Lambda^0}(x;q,t))=\sum_\Omega K_{\Omega \Lambda^0}(q,t) s_\Omega(x;t) 
$$
We thus get from \eqref{inclus} that
$$
\sum_\Delta K_{\Gamma \Lambda}(q,t) s_\Delta(x;t)=\sum_\Omega K_{\Omega \Lambda^0}(q,t) s_\Omega(x;t) 
$$
where the sum is over all $\Delta$'s that can be obtained from $\Gamma$ by adding a circle $m+1$ on a symmetric entry. We thus have that $K_{\Omega \Lambda^0} (q,t)= K_{\Gamma \Lambda} (q,t)$ if $\Omega$ can be obtained from $\Gamma$ by adding a circle $m+1$ on a symmetric entry (including a zero entry). But this is exactly the statement of (3) when the entry $\mu_i$ is not equal to zero.

In (4),  $J_\Lambda(x;q,t)$ is in this case symmetric in $x_{i}$ and $x_{i+1}$ from Remark~\ref{remarksymMac}, and so is $\wp(J_\Lambda(x;q,t))$.  
Hence
\begin{equation} \label{eq3}
T_i(\wp(J_\Lambda(x;q,t)))=t\wp(J_{\Lambda}(x;q,t))=t\left (\sum_{\Omega} K_{\Omega \Lambda}(q,t) s_{\Omega}(x;t) \right) 
\end{equation}
On the other hand,
\begin{equation} \label{eq4}
T_i(\wp(J_\Lambda(x;q,t)))= T_i\left (\sum_{\Omega} K_{\Omega \Lambda}(q,t) s_{\Omega}(x;t) \right) =  \sum_{\Omega} K_{\Omega\Lambda}(q,t) T_i s_{\Omega}(x;t) 
\end{equation}
Suppose without loss of generality that $\Omega=(\pmb b; \mu)$ is such that
$b_i>b_{i+1}$ and let $\tilde \Omega=s_i \Omega$.  We then have from Corollary~\ref{coroTons} that
$T_i s_\Omega= s_{\tilde \Omega}$ and $T_i s_{\tilde \Omega}= (t-1)s_{\tilde \Omega}+t s_\Omega$.  Thus, comparing the terms involving $s_\Omega$ and $s_{\tilde \Omega}$ in \eqref{eq3} and \eqref{eq4}, we obtain
$$
t K_{\Omega \Lambda}(q,t) s_{\Omega}(x;t) + t K_{\tilde \Omega \Lambda}(q,t) s_{\tilde \Omega}(x;t)
= K_{\Omega \Lambda}(q,t) s_{\tilde \Omega}(x;t) +  K_{\tilde \Omega \Lambda}(q,t) (t-1)s_{\tilde \Omega}(x;t) + t  K_{\tilde \Omega \Lambda}(q,t) s_{\Omega}(x;t)
$$
which, by linear independence of $s_\Omega$ and  $s_{\tilde \Omega}$, holds iff $ K_{\tilde \Omega \Lambda}(q,t)= K_{\Omega \Lambda}(q,t)$. 

Finally, we consider (5).  Let $\tilde \Lambda=s_i \Lambda$.  We have that $c_\Lambda(q,t)$ and $c_{\tilde \Lambda}(q,t)$ only differ by the circles $i$ and $i+1$ that are switched.  Because $i$ and $i+1$ are consecutive numbers, all the cells in $\Lambda$ and $\tilde \Lambda$ will contribute the same value to $c_\Lambda(q,t)$ and $c_{\tilde \Lambda}(q,t)$ except the cell at the intersection of the two circles.  This cell will contribute a factor of $(1-q^a t^{\ell+1})$ in $c_{\Lambda}(q,t)$ while in  $c_{\tilde \Lambda}(q,t)$ it will contribute a factor of   $(1-q^a t^{\ell+2})$ (given that the circle $i$ is below the circle $i+1$ in $\tilde \Lambda$), where the arm-length $a$ and the leg-length $\ell$ are relative to $\Lambda$.  
Using \eqref{property2} with $x=(x_1,\dots,x_N)$, we also have in our case that
$$
T_i P_\Lambda(x;q,t) = \left(\frac{t-1}{1-q^{-a}t^{-\ell-1}} \right) P_\Lambda(x;q,t)+ \frac{(1-q^{a}t^{\ell+2})(1-q^{a}t^{\ell})}{(1-q^{a}t^{\ell+1})^2} P_{\tilde \Lambda}(x;q,t)
$$
since $\bar \eta_i/\bar \eta_{i+1}=q^a t^{\ell+1}$ and $u_{\Lambda,N}(t)=u_{\tilde \Lambda,N}(t)$.  Therefore, taking $c_\Lambda(q,t)$ and $c_{\tilde \Lambda}(q,t)$ into account, we get
\begin{align} \nonumber
  T_i J_\Lambda(x;q,t) & = \left(\frac{t-1}{1-q^{-a}t^{-\ell-1}} \right)  J_\Lambda(x;q,t)+ \frac{(1-q^{a}t^{\ell+2})(1-q^{a}t^{\ell})}{(1-q^{a}t^{\ell+1})^2}\frac{ c_\Lambda(q,t)}{ c_{\tilde \Lambda}(q,t)}  J_{\tilde \Lambda}(x;q,t) \\ \label{eqact}
 &  = \left(\frac{t-1}{1-q^{-a}t^{-\ell-1}} \right)  J_\Lambda(x;q,t)+\left( \frac{1-q^{a}t^{\ell}}{1-q^{a}t^{\ell+1}} \right)  J_{\tilde \Lambda}(x;q,t) 
\end{align}
from our previous observation on $c_{\Lambda}(q,t)$ and $c_{\tilde \Lambda}(q,t)$.
After applying the plethysm $\wp$ (which commutes with $T_i$), we obtain
$$
T_i \wp(J_\Lambda(x;q,t))
 = \left(\frac{t-1}{1-q^{-a}t^{-\ell-1}} \right)  \wp(J_\Lambda(x;q,t))+\left( \frac{1-q^{a}t^{\ell}}{1-q^{a}t^{\ell+1}} \right)  \wp(J_{\tilde \Lambda}(x;q,t)) 
$$
Focusing on the terms involving $s_\Omega$ on both sides of the equation then yields
$$
K_{\Omega \Lambda}(q,t) T_i s_\Omega =   \left(\frac{t-1}{1-q^{-a}t^{-\ell-1}} \right) K_{\Omega \Lambda}(q,t) s_\Omega +\left( \frac{1-q^{a}t^{\ell}}{1-q^{a}t^{\ell+1}} \right) K_{\Omega \tilde \Lambda}(q,t) s_\Omega
$$
where we used the fact that $s_\Omega$  cannot appear in any $T_i s_\Gamma$ such that $\Gamma\neq \Omega$. We have from Corollary~\ref{coroTons} that $T_i s_\Omega=t s_\Omega$ since $b_i=b_{i+1}$, which implies that
$$
t K_{\Omega \Lambda}(q,t)  =   \left(\frac{t-1}{1-q^{-a}t^{-\ell-1}} \right) K_{\Omega \Lambda}(q,t)  +\left( \frac{1-q^{a}t^{\ell}}{1-q^{a}t^{\ell+1}} \right) K_{\Omega \tilde \Lambda}(q,t) 
$$
Combining the coefficients of $K_{\Omega \Lambda}(q,t)$, we obtain
$$
t \left( \frac{1-q^{a}t^{\ell}}{1-q^{a}t^{\ell+1}} \right) K_{\Omega \Lambda}(q,t) = \left( \frac{1-q^{a}t^{\ell}}{1-q^{a}t^{\ell+1}} \right) K_{\Omega \tilde \Lambda}(q,t) 
$$
which readily implies our claim.
\end{proof}
We get from Theorem~\ref{propokostka}(2) that the usual $(q,t)$-Kostka coefficients $K_{\mu \lambda}(q,t)$ are special cases of the coefficients $K_{\Omega \Lambda}(q,t)$. 
\begin{corollary} \label{corospecial}
  If $\Lambda=(0^m;\lambda)$ and $\Omega=(0^m;\mu)$ then
  $$
K_{\Omega \Lambda}(q,t)=K_{\mu \lambda}(q,t)
$$
where $K_{\mu \lambda}(q,t)$ is the usual Macdonald $(q,t)$-Kostka coefficient.
\end{corollary}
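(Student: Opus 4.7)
The plan is to obtain this corollary as an essentially immediate consequence of part (2) of Proposition~\ref{propokostka}, which tells us that appending an extra $0$ to both the non-symmetric parts of $\Lambda$ and $\Omega$ leaves the coefficient $K_{\Omega\Lambda}(q,t)$ unchanged. Reading this identity in the stripping-off direction and iterating $m$ times, one obtains
$$
K_{(0^m;\mu),(0^m;\lambda)}(q,t) \;=\; K_{(0^{m-1};\mu),(0^{m-1};\lambda)}(q,t) \;=\; \cdots \;=\; K_{(;\mu),(;\lambda)}(q,t),
$$
reducing the problem to the $m=0$ case. All the technical content has already been absorbed into Proposition~\ref{propokostka}(2), which relied on Propositions~\ref{propoiJ} and \ref{propoinclusion0} governing the inclusions $i:R_m\to R_{m+1}$, so no additional work is required at this step.

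It then remains to identify $K_{(;\mu),(;\lambda)}(q,t)$ with the classical Macdonald $(q,t)$-Kostka coefficient. When $m=0$, the ring $R_0$ equals the usual ring $\mathbf{\Lambda}$ of symmetric functions; the basis $k_\Lambda(x;t)$ specializes to $\{s_\lambda(x)\}$, so the scalar product $\langle\cdot,\cdot\rangle_0$ of \eqref{scalprod} coincides with the Hall inner product. Consequently the dual $m$-symmetric Schur functions $s^*_{(;\lambda)}(x;t)$ reduce to $s_\lambda(x)$, and by self-duality so do the $m$-symmetric Schur functions $s_{(;\lambda)}(x;t)$. Moreover, the plethysm $\wp$ defined in \eqref{plethysm} reduces to the standard Macdonald plethysm, and $J_{(;\lambda)}(x;q,t)$ is by definition the integral form of the usual Macdonald polynomial $J_\lambda(x;q,t)$. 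The defining relation \eqref{Kostka} of Conjecture~\ref{mainconjec} thus becomes the classical Macdonald expansion $\wp(J_\lambda(x;q,t))=\sum_\mu K_{\mu\lambda}(q,t)\, s_\mu(x)$, proving that $K_{(;\mu),(;\lambda)}(q,t)=K_{\mu\lambda}(q,t)$ as required.

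There is essentially no obstacle here: the argument is a pure bookkeeping verification that the $m=0$ specialization of every ingredient in the construction (scalar product, $k$-basis, dual $m$-symmetric Schur functions, plethysm, integral form) matches its classical counterpart, combined with a straightforward iteration of Proposition~\ref{propokostka}(2).
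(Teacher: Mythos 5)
Your proposal is correct and follows essentially the same route as the paper, which derives the corollary by iterating Proposition~\ref{propokostka}(2) to strip off the zero non-symmetric entries and reduce to the $m=0$ case. Your explicit verification that every ingredient (the $k$-basis, the scalar product, the dual Schur functions, the plethysm, and the integral form) specializes at $m=0$ to its classical counterpart is left implicit in the paper but is exactly the intended justification of the base case.
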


Let
\begin{equation} \label{eqPsi}
\Psi_N =(1-t) (1+T_{N-1}+T_{N-2} T_{N-1} + \cdots + T_m \cdots T_{N-1}) \Phi_q 
\end{equation}
where we recall that $\Phi_q$ was defined in \eqref{eqPhi}. 
The following theorem, which is the analog of \eqref{eqPhi} in the $m$-symmetric world, is proved in Appendix~\ref{secAppendix}.
\begin{theorem} \label{propom1}
  For any $N$ and any $m \geq 1$, we have that
  $$
\Psi_N J_\Lambda (x_1,\dots,x_N;q,t)= t^{-\# \{ 2 \leq j \leq m \, | \, a_j \leq  a_1 \}} 
J_{\Lambda^\square} (x_1,\dots,x_N;q,t)
$$
where $\Lambda^\square =\bigl((a_2,\dots,a_{m}); \lambda \cup (a_1+1)\bigr)$.  In particular, if $m=1$, we have
 $$
\Psi_N J_{(a_1;\lambda)} (x_1,\dots,x_N;q,t)=  
J_{\lambda \cup (a_1+1)} (x_1,\dots,x_N;q,t)
$$
\end{theorem}  
We can now establish the next  proposition, which provides a decomposition of the usual $(q,t)$-Kostka coefficients in terms of $(q,t)$-coefficients at $m=1$ of lower degree.
\begin{proposition} \label{propoKostka1}
  For any entry $\lambda_j$ in $\lambda$,
  the usual $(q,t)$-Kostka coefficient $K_{\mu \lambda}(q,t)$
is such that
 $$
K_{\mu \lambda}(q,t) = \left( \begin{array}{cc}
   \vline &\lambda \\
    \vline & \mu
\end{array}
 \right)= \sum_{\mu_i}
 \left( \begin{array}{ccc}
\lambda_j-1 & \vline &\lambda \setminus \lambda_j \\
 \mu_i -1  &  \vline & \mu \setminus \mu_i
\end{array}
\right)
$$
where the sum is over all distinct parts $\mu_i$ in $\mu$. The positivity conjecture for $m$-symmetric Macdonald polynomials at $m=1$ is thus a refinement of the usual Macdonald positivity.
\end{proposition}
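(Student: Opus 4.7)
The plan is to combine Proposition~\ref{propokostka}(1) with the operator identity $\Psi_N J_{(\lambda_j-1;\lambda\setminus\lambda_j)}=J_\lambda$ from Proposition~\ref{propom1} and the restriction formula of Proposition~\ref{proporJ}. First I would apply Proposition~\ref{propokostka}(1) at $m=0$ with $\Lambda=(;\lambda)$, $\Omega=(;\mu)$, and the chosen part $\lambda_j$ to obtain $q^{\lambda_j}K_{\mu\lambda}(q,t)=K_{(0;\mu),(\lambda_j;\lambda\setminus\lambda_j)}(q,t)$; this reduces the proposition to proving
\[
K_{(0;\mu),(\lambda_j;\lambda\setminus\lambda_j)}(q,t)\;=\;q^{\lambda_j}\sum_{\mu_i\text{ distinct}}K_{(\mu_i-1;\mu\setminus\mu_i),(\lambda_j-1;\lambda\setminus\lambda_j)}(q,t).
\]

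Next I would invoke the bridge identity $r(J_{(\lambda_j;\lambda\setminus\lambda_j)})=q^{\lambda_j}\Psi_N J_{(\lambda_j-1;\lambda\setminus\lambda_j)}$, which follows from combining Propositions~\ref{proporJ} and \ref{propom1} (both sides equal $q^{\lambda_j}J_\lambda$). Applying the plethysm $\wp$ and using the commutativity $\wp\circ r=r\circ\wp$ together with the restriction formula $r(s_{(b;\nu)}(x;t))=\delta_{b,0}\,s_\nu(x)$ of Proposition~\ref{propoinclusion0}, the left-hand side expands as $q^{\lambda_j}\sum_\mu K_{\mu\lambda}(q,t)\,s_\mu(x)$. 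The right-hand side, in contrast, requires understanding how $\wp\circ\Psi_N$ transforms the expansion $\wp J_{(\lambda_j-1;\lambda\setminus\lambda_j)}=\sum_\Omega K_{\Omega,(\lambda_j-1;\lambda\setminus\lambda_j)}(q,t)\,s_\Omega(x;t)$ at $m=1$ into an element of $R_0$.

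The hard part will be to describe how $\wp\circ\Psi_N$ acts on the individual $m$-symmetric Schur functions $s_{(b;\nu)}(x;t)$: the expected behavior is that each $s_{(b;\nu)}(x;t)$ contributes (up to factors that cancel with the appropriate arm-leg products) to $s_{\nu\cup(b+1)}(x)$ in a coherent way, so that after reparameterizing the resulting sum via $\mu_i=b+1$ (as a distinct part of $\mu$) and $\mu\setminus\mu_i=\nu$, the coefficient of $s_\mu(x)$ on the right matches the sum $\sum_{\mu_i}K_{(\mu_i-1;\mu\setminus\mu_i),(\lambda_j-1;\lambda\setminus\lambda_j)}(q,t)$ claimed by the proposition. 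This step is most cleanly carried out by pairing both sides against $s^*_\mu$ in the scalar product $\langle\cdot,\cdot\rangle_0$, transferring the operator through the adjointness relation $\langle i(f),g\rangle_{m+1}=\langle f,r(g)\rangle_m$ of \eqref{dualir}, and invoking the restriction formula $r(s^*_\Lambda)=s^*_{\hat\Lambda}$ of Proposition~\ref{propoinclusion0} to identify the adjoint image of $s^*_\mu$ with $\sum_{\mu_i\text{ distinct}}s^*_{(\mu_i-1;\mu\setminus\mu_i)}$.

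The refinement statement follows immediately from the established identity: if the positivity Conjecture~\ref{mainconjec} holds at $m=1$, every term $K_{(\mu_i-1;\mu\setminus\mu_i),(\lambda_j-1;\lambda\setminus\lambda_j)}(q,t)$ in the sum on the right lies in $\mathbb{N}[q,t]$, hence their sum $K_{\mu\lambda}(q,t)$ is also in $\mathbb{N}[q,t]$, recovering the classical Macdonald positivity at $m=0$.
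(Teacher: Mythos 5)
Your overall strategy is essentially the paper's: reduce everything to the operator $\tilde\Psi_N=\wp\circ\Psi_N\circ\wp^{-1}$, which by Proposition~\ref{propom1} sends $\wp(J_{(\lambda_j-1;\lambda\setminus\lambda_j)})$ to $\wp(J_{\lambda})$, and then track how it converts the $m=1$ Schur expansion into the $m=0$ one. The detour through Proposition~\ref{propokostka}(1) and the bridge identity $r(J_{(\lambda_j;\lambda\setminus\lambda_j)})=q^{\lambda_j}\Psi_N J_{(\lambda_j-1;\lambda\setminus\lambda_j)}$ is correct but buys nothing: both sides equal $q^{\lambda_j}J_\lambda$, and after cancelling $q^{\lambda_j}$ you are back to comparing $\tilde\Psi_N\bigl(\wp (J_{(\lambda_j-1;\lambda\setminus\lambda_j)})\bigr)$ with $\wp(J_\lambda)$, exactly as in the paper.

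The genuine gap is the step you yourself flag as ``the hard part'': the identity $\tilde\Psi_N\bigl(s_{(b;\nu)}(x;t)\bigr)=s_{\nu\cup(b+1)}(x)$, or equivalently (by duality) that the adjoint of $\tilde\Psi_N$ sends $s^*_\mu$ to $\sum_{\mu_i}s^*_{(\mu_i-1;\mu\setminus\mu_i)}$. Your proposed mechanism for establishing it does not apply: the adjointness relation \eqref{dualir} and the formula $r(s^*_\Lambda)=s^*_{\hat\Lambda}$ concern the inclusion $i$ and the restriction $r$, and $\Psi_N$ is neither of these (at $m=1$ one has $r(J_{(b;\nu)})=q^{b}J_{\nu\cup(b)}$ whereas $\Psi_N J_{(b;\nu)}=J_{\nu\cup(b+1)}$), so there is no adjointness to ``transfer through''; writing down the adjoint action on $s^*_\mu$ is merely a dual restatement of the claim, not a proof of it. The paper closes this gap by first computing $\tilde\Psi_N$ on the $k$-basis --- using $\Psi_N(x_1^{b})=(1-t)P_{(b+1)}(x_1,\dots,x_N;t)=\wp^{-1}(h_{b+1})$ to get $\tilde\Psi_N(k_{(b;\mu)})=h_{b+1}\,s_\mu$ --- and then deducing the Schur-basis statement by matching the Pieri expansion of $h_{b+1}s_\mu$ against the explicit $m=1$ combinatorial expansion $k_{(b;\mu)}=\sum_{(a;\lambda)}s_{(a;\lambda)}$ over horizontal strips confined to the first $a$ columns. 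Some argument of this kind is indispensable and is absent from your proposal; everything downstream of it (the reindexing $\mu_i=b+1$, the equating of coefficients, and the positivity refinement) is fine.
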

\begin{example}Taking $\lambda=(3,3)$ and $\mu=(3,2,1)$, the decomposition (which is unique in this case since $\lambda$ has only one distinct entry) gives
  $$
K_{(3,2,1),(3,3)}(q,t)=K_{(2;2,1),(2;3)}(q,t)+ K_{(1;3,1),(2;3)}(q,t)+ K_{(0;3,2),(2;3)}(q,t)
$$
A computer calculation  tells us that
$$
K_{(2;2,1),(2;3)}(q,t)=q^4 t^3 + q^3 t^2 + q^2 t^2 + q^2 t + q t,\quad
K_{(1;3,1),(2;3)}(q,t)= q^5t^3 + q^4 t^2 + q^3 t^2 + q^3 t + q^2 t + q
$$
and
$$
K_{(0;,3,2),(2;3)}(q,t) = q^5 t^2 + q^4 t^2 + q^4 t + q^3 t + q^2
$$
which is consistent with the value of $K_{(3,2,1),(3,3)}(q,t)$ found in \cite{M}.
Observe that the diagram of $(2;3)$ is obtained from that of $\lambda=(3,3)$ by transforming a box into a circle (indexed by a 1).  Similarly, the diagram of $(2;2,1)$, $(1;3,1)$ and $(0;3,2)$ are those that can be obtained from that of $\mu=(3,2,1)$ by transforming a box into a circle.

\end{example}

\begin{proof}
  We work in $N$ variables.  Consider the operator $\tilde \Psi_N=\wp \circ \Psi_N \circ \wp^{-1}$.  From Theorem~\ref{propom1}, we have that $\tilde \Psi_N \circ \wp (J_{(a_1;\lambda)})=\wp ( J_{\lambda \cup (a_1+1)})$. In order to obtain our decomposition, we thus need to obtain the action of $\tilde \Psi_N$ on the $m$-symmetric Schur functions. Before obtaining that action, 
we will first show that $\tilde \Psi_N$ is such that
\begin{equation} \label{eqprimera}
\tilde \Psi_N  \bigl( k_{(b_1;\mu)} (x;t) \bigr) = h_{b_1+1}(x) s_{\mu} (x)
\end{equation}
 From $k_{(b_1;\mu)}(x;t)=x_1^{b_1}s_\mu$, we get
$$
\wp^{-1} \bigl( k_{(b_1;\mu)} (x;t) \bigr) = x_1^{b_1} \wp^{-1} (s_\mu)
$$
Since $\wp^{-1} (s_\mu)$ commutes with $\Psi_N$, we then have that
$$
 \Psi_N \circ \wp^{-1} \bigl( k_{(b_1;\mu)} (x;t) \bigr) =  \wp^{-1} (s_\mu) 
 \Psi_N (x_1^{b_1})
 $$
 Now, in the case $m=1$, the operator $\Psi_N$ is such that
 $$
\Psi_N(x_1^{b_1}) = (1-t) (1+T_{N-1} + T_{N-2} T_{N-1} +\dots+ T_1 T_2 \cdots T_{N-1}) x_1^{b_1+1} =(1-t) P_{(b_1+1)}(x_1,\dots,x_N;t)
 $$
where $P_{(b_1+1)}(x_1,\dots,x_N;t)$ is the (symmetric) Hall-Littlewood polynomial indexed by a single entry \cite{M}.  In that case, it is known \cite{Ber,M} that
$$
(1-t)P_{(b_1+1)}(x_1,\dots,x_N;t) = \wp^{-1}({h_{b_1+1}})
$$
from which we obtain that $\Psi_N(x_1^{b_1})=\wp^{-1}({h_{b_1+1}})$. The map $\wp^{-1}$ being a homomorphism, it then follows that
$$
 \Psi_N \circ \wp^{-1} \bigl( k_{(b_1;\mu)} (x;t) \bigr) =  \wp^{-1} (s_\mu) 
 \Psi_N (x_1^{b_1})= \wp^{-1} (h_{b_1+1} s_\mu) 
  $$
 which implies \eqref{eqprimera}.

 We will now show that
 \begin{equation} \label{segunda}
 \tilde \Psi_N(s_{(a_1;\lambda)})=s_{\lambda \cup (a_1+1)}
 \end{equation}
In the $m=1$ case, every $m$-partition is dominant. We thus obtain from Remark~\ref{remarkm1}  using the duality (see \eqref{eqDs} and \eqref{DDs}) that
$$
k_{(b_1;\mu)}(x;t) = \sum_{(a_1;\lambda)} s_{(a_1;\lambda)}(x;t)
$$
where the sum is over all $(a_1;\lambda)$'s such that $(\lambda \cup (a_1))/\mu$ is a horizontal $b_1$-strip whose entries all lie within the first $a_1$ columns.  Therefore, it follows that
$$
\tilde \Psi_N (k_{(b_1;\mu)}) = h_{b_1+1} s_\mu = \sum_\nu s_\nu =\sum_{(a_1;\lambda)} \tilde \Psi_N (s_{(a_1;\lambda)})
$$
where the first sum is over all $\nu$'s such that $\nu/\mu$ is a horizontal $(b_1+1)$-strip. Since the $k_{(b_1;\mu)}$'s form a basis of $R_1$,
it suffices to prove that
  $\tilde \Psi_N(s_{(a_1;\lambda)})=s_{\lambda \cup (a_1+1)}$ implies that
$\tilde \Psi_N (k_{(b_1;\mu)}) = h_{b_1+1} s_\mu$.  We will see that it is indeed the case.  We have
$$
\sum_{(a_1;\lambda)} \tilde \Psi_N (s_{(a_1;\lambda)})= \sum_{(a_1;\lambda)} s_{\lambda \cup (a_1+1)}
$$
where the sum in the second sum can be thought as being over all $(a_1;\lambda)$'s such that $(\lambda \cup (a_1+1))/\mu$ is a horizontal $(b_1+1)$-strip whose entries all lie within the first $a_1+1$ columns (and including a cell in column $a_1+1$).  This implies that the map $\nu \mapsto \lambda \cup (a_1+1)$, where $a_1+1$ is the rightmost column in $\nu/\mu$,
is an obvious bijection between the $\nu$'s in $\sum_\nu s_\nu$ and the  $(a_1;\lambda)$'s in  $\sum_{(a_1;\lambda)} s_{\lambda \cup (a_1+1)}$. Hence   $\tilde \Psi_N(s_{(a_1;\lambda)})=s_{\lambda \cup (a_1+1)}$ implies
$\tilde \Psi_N (k_{(b_1;\mu)}) = h_{b_1+1} s_\mu$ and \eqref{segunda} holds.

Applying $\tilde \Psi_N$ on both sides of 
$$
\wp (J_{(a_1;\lambda)})= \sum_{(b_1;\mu)} K_{(b_1;\mu) (a_1;\lambda)}(q,t) s_{(b_1;\mu)}
$$
we thus get from Theorem~\ref{propom1} that
$$
\wp (J_{\lambda \cup(a_1+1)}) = \sum_{\nu} K_{\nu, \lambda \cup(a_1+1)}(q,t) s_\nu =
 \sum_{(b_1;\mu)} K_{(b_1;\mu) (a_1;\lambda)}(q,t) s_{\mu \cup (b_1+1) }
$$
Equating the coefficients of $s_\nu$ on both sides, the proposition then follows straightforwardly.
\end{proof}

\section{Non-symmetric Macdonald polynomials as $m$-symmetric functions and
  the large $m$ limit} \label{seclargem}
Suppose that $\eta=(\eta_1,\dots,\eta_\ell,0^{N-\ell})$ is a composition of length $\ell$.
Then 
$$ 
T_i E_\eta(x;q,t)= t E_\eta(x;q,t) 
$$
for all $\ell < i < N$ since $\eta_{i}=\eta_{i+1}=0$ in that case. Hence, $\mathcal S_m^t E_\eta(x;q,t)$ is equal to $E_\eta(x;q,t)$ times a constant whenever $m \geq \ell$. We thus get that  if $\Lambda= (\eta_1,\dots,\eta_\ell,0^{m-\ell}; \emptyset)$ then
$$
P_\Lambda(x;q,t) = E_\eta(x;q,t)
$$
since the coefficient of $x^\eta$ is equal to 1 in both functions.
That is, when $m$ is large enough, a non-symmetric Macdonald polynomial is also an $m$-Macdonald polynomial.
In this case, the integral form of the non-symmetric Macdonald polynomial,
$J_\eta(x;q,t)= c_{(\eta_1,\dots,\eta_\ell ; \emptyset)}(q,t) E_\eta(x;q,t)$, 
has the following expansion when considered as an $m$-symmetric function:
$$
  \wp(J_\eta(x;q,t)) = \sum_\Omega K_{\Omega (\eta_1,\dots,\eta_\ell,0^{m-\ell} ; \emptyset)} (q,t) s_{\Omega}(x;t)
$$

As we have seen in Theorem~\ref{propokostka}, by increasing the value of $m$, one can move to the non-symmetric side the entries on the symmetric side of $\Lambda$ and $\Omega$.  We can also use this idea in the other direction.  Let $\omega$ be a composition of arbitrary length $\ell'$.  Taking $m \geq \max \{ \ell, \ell'\}$,  we can define $K_{\omega \eta}(q,t)$ as
$$
K_{\omega \eta}(q,t)=
\left( \begin{array}{ccccccccccc}
  \eta_1 & \cdots & \eta_\ell& 0&  \cdots &0&  0 & \cdots & 0 &  \vline &\emptyset \\
   \omega_1  & \cdots & \omega_\ell& \omega_{\ell+1}& \cdots & \omega_{\ell'}& 0 & \cdots & 0 & \vline & \emptyset 
\end{array}
\right)
$$
where the number of columns on the non-symmetric side is $m$ (we have supposed  that $\ell' > \ell$ only for the sake of illustrating the general behavior). Note that this definition does not depend on $m$ because we 
can add any number of columns of 0's to the right
without changing the value of $K_{\omega \eta}(q,t)$ by  Theorem~\ref{propokostka}(2).
 Now, from
Theorem~\ref{propokostka}(2)(3)(4), we also have that
$$
K_{\omega \eta} (q,t)=
\left( \begin{array}{ccccccccccc}
  \eta_1 & \cdots & \eta_\ell& 0&  \cdots &0&  0 & \cdots & 0 &  \vline &\emptyset \\
   \omega_1  & \cdots & \omega_\ell& \nu_{1}& \cdots & \nu_{\ell(\nu)} & 0 & \cdots & 0 & \vline & \emptyset 
\end{array}
\right)=\left( \begin{array}{ccccc}
  \eta_1 & \cdots & \eta_\ell & \vline &\emptyset \\
   \omega_1 & \cdots & \omega_\ell & \vline & \nu 
\end{array}
\right)
=K_{\Omega^{\omega} (\eta_1,\dots,\eta_\ell ; \emptyset)}(q,t)  
$$
where $\nu=(\omega_{\ell+1},\dots,\omega_{\ell'})^+$ is the partition obtained by reordering the entries of $(\omega_{\ell+1},\dots,\omega_{\ell'})$, and where $\Omega^{\omega}=(\omega_1,\dots,\omega_\ell;\nu)$.
For a given $\eta$, there are thus only a finite number of distinct coefficients $K_{\omega \eta} (q,t)$ (corresponding to the number of $\ell$-partitions of degree $|\eta|$).  We also have that if $m \geq \ell+|\eta|$, then every $K_{\Omega (\eta_1,\dots,\eta_\ell ; \emptyset)}$ is equal to a given $K_{\omega \eta}(q,t)$ such that $\ell(\omega) \leq m$.  As such, all the information is contained in the  $K_{\omega \eta}(q,t)$'s such that $\ell(\omega) \leq \ell+|\eta|$.

Doing the expansion in the $m$-symmetric world (with  $m \geq \ell+|\eta|$), we
obtain
\begin{equation} \label{expamod}
 \wp( J_\eta(x;q,t)) = \sum_{\omega \in \mathbb Z_{\geq 0}^m} K_{\omega \eta} (q,t) \, s_{(\omega_1,\dots,\omega_m;\emptyset)}(x;t)  \mod  \mathcal L_m
\end{equation}
where $\mathcal L_m$ is the linear span of $m$-symmetric Schur functions indexed by $m$-partitions whose symmetric side is \underline{not} empty. From the discussion above, there is no loss of information in working modulo  $\mathcal L_m$, as the full $m$-symmetric expansion can be recovered from the r.h.s. in \eqref{expamod}. From Proposition~\ref{propoHspecial}, we also get $s_{(\omega_1,\dots,\omega_m;\emptyset)}(x;t)=H_\omega(x;t)$.  We thus have  the following positivity conjecture on non-symmetric Macdonald polynomials.
\begin{conjecture} Let $\eta$ be a composition of length $\ell$.  Then, for $m \geq \ell$, we have that 
$$
 \wp( J_\eta(x;q,t)) = \sum_{\omega \in \mathbb Z_{\geq 0}^m} K_{\omega \eta} (q,t) \, H_\omega(x;t)  \mod  \mathcal L_m
$$
with $K_{\omega \eta} (q,t) \in \mathbb N[q,t]$.
\end{conjecture}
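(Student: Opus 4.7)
The plan is to derive this positivity statement as a direct consequence of the main positivity Conjecture~\ref{mainconjec}, together with the identification of non-symmetric Macdonald polynomials as $m$-symmetric Macdonald polynomials when $m$ is sufficiently large. First I would record the observation already sketched at the start of Section~\ref{seclargem}: for $m\geq \ell$ and $\Lambda=(\eta_1,\dots,\eta_\ell,0^{m-\ell};\emptyset)$, the $t$-symmetrization $\mathcal S^t_{m,N}$ acts on $E_\eta$ by the scalar that normalizes the coefficient of $x^\eta$ to $1$ (because $\eta_i=\eta_{i+1}=0$ for all $i>\ell$, so $T_i$ acts as multiplication by $t$), hence $P_\Lambda(x;q,t)=E_\eta(x;q,t)$. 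The added trailing zero-circles do not affect any arm/leg contribution in $c_\Lambda(q,t)$, so the integral forms satisfy $J_\Lambda(x;q,t)=J_\eta(x;q,t)$ as elements of $R_m$.

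Next I would apply Conjecture~\ref{mainconjec} to this $\Lambda$, obtaining
\[
\wp\bigl(J_\eta(x;q,t)\bigr)\;=\;\sum_{\Omega} K_{\Omega \Lambda}(q,t)\, s_\Omega(x;t), \qquad K_{\Omega \Lambda}(q,t)\in \mathbb N[q,t].
\]
Writing $\Omega=(\pmb b;\mu)$, I would split the sum into the terms with $\mu=\emptyset$ and the terms with $\mu\neq\emptyset$; by definition the latter lie in $\mathcal L_m$, so modulo $\mathcal L_m$ only $\Omega=(\omega;\emptyset)$ survives. Invoking Proposition~\ref{propoHspecial} to rewrite $s_{(\omega;\emptyset)}(x;t)=H_\omega(x_1,\dots,x_m;t)$, this produces exactly
\[
\wp\bigl(J_\eta(x;q,t)\bigr)\;\equiv\;\sum_{\omega\in\mathbb Z_{\geq 0}^m} K_{(\omega;\emptyset)\,\Lambda}(q,t)\, H_\omega(x_1,\dots,x_m;t) \pmod{\mathcal L_m}.
\]

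It then remains to match $K_{(\omega;\emptyset)\,\Lambda}(q,t)$ with the quantity $K_{\omega\eta}(q,t)$ defined at the beginning of Section~\ref{seclargem}. This identification is tautological as soon as the definition is known to be independent of the ambient $m$, and that independence is given directly by Proposition~\ref{propokostka}(2), which shows that padding both sides by a column of zeros on the non-symmetric side leaves the Kostka coefficient unchanged. Positivity $K_{\omega\eta}(q,t)\in\mathbb N[q,t]$ is inherited coefficient-by-coefficient from the main conjecture.

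The only real obstacle in this scheme is that the argument is entirely \emph{conditional}: it rests on Conjecture~\ref{mainconjec}, which itself is the central open problem of the paper. An unconditional attack would have to bypass the full $m$-symmetric Schur expansion. The most plausible route I would attempt is an induction on $|\eta|$ using Proposition~\ref{propom1} at $m=1$ (and more generally the operators $\Psi_N$), which trades an $m$-symmetric Macdonald polynomial indexed by $(a_1;\lambda)$ against one indexed by $\lambda\cup(a_1+1)$; combined with the restriction and inclusion formulas of Propositions~\ref{proporJ}–\ref{propoiJ} and the recursion of Proposition~\ref{propoD} on $D_{\Lambda\Omega}(t)$, one may hope to reduce the conjecture to the classical Macdonald positivity of Haiman plus an explicit combinatorial transfer in the non-symmetric variables. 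Carrying out this reduction so that manifest $\mathbb N[q,t]$-positivity is preserved at every step is the hard part; verifying positivity along the recursion of Proposition~\ref{propoD}, which involves the negative sign $(1-t^{-1})$, is the crux.
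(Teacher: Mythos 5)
Your derivation matches the paper's own: the statement is a conjecture, and the paper obtains it exactly as you do, by specializing Conjecture~\ref{mainconjec} to $\Lambda=(\eta_1,\dots,\eta_\ell,0^{m-\ell};\emptyset)$ (where $P_\Lambda=E_\eta$ for $m\geq\ell$), discarding the terms in $\mathcal L_m$, rewriting $s_{(\omega;\emptyset)}(x;t)=H_\omega(x;t)$ via Proposition~\ref{propoHspecial}, and identifying the surviving coefficients with $K_{\omega\eta}(q,t)$ through Proposition~\ref{propokostka}. Your observation that the positivity itself remains conditional on the main conjecture is precisely why the paper states this as a conjecture rather than a theorem.
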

\begin{remark} It is not clear whether there is a connection between the coefficients $K_{\omega \eta} (q,t)$ and the composition Kostka functions $K_{\lambda \mu}(q,t)$ of \cite{K}. For instance if $\omega=\lambda=(2,2)$ and $\eta=\mu=(3,1)$, we get $K_{\omega \eta} (q,t)=qt+q^2 t+q^3 t^2$ while the composition Kostka function is $K_{\lambda \mu}(q,t) =t+qt +q t^2$ (see (12.13) in \cite{K}), which is equal in this case to the usual $(q,t)$-Kostka coefficient.
But if we replace $\eta=(3,1)$ by $\eta=(0,0,3,1)$, we get $K_{\omega \eta} (q,t)=q^4(t+qt +q t^2)$ which is $K_{\lambda \mu}(q,t)$ up to a $q$-power, suggesting that there might be a non-trivial connection between the two types of coefficients. 
\end{remark}
  
\begin{remark}
As discussed above, there is no loss of information in working modulo
$\mathcal L_m$ when $m \geq \ell+|\eta|$.  Working modulo $\mathcal L_m$ also has the advantage of allowing to
compute the coefficients $K_{\omega \eta} (q,t)$ without having to do the full expansion in the $m$-symmetric Schur function basis. Indeed, we can first
expand $ \wp( J_\eta(x;q,t))$ in the $k_\Omega(x;t)$ basis and then use the relation
\begin{equation} 
k_{\Omega} (x;t)= \sum_{\pmb a \in \mathbb Z_{\geq 0}^m } D_{(\pmb a;\emptyset) \Omega}(t) \, H_{\pmb a}(x;t)  \mod  \mathcal L_m
\end{equation}
which is obtained from  \eqref{eqDs} by 
simply killing the terms involving $m$-symmetric Schur functions indexed by $m$-partitions whose symmetric part is not empty (observe that Proposition~\ref{propoHspecial} allows us to then change the remaining  $s_{(\pmb a;\emptyset)}(x;t)$'s into $ H_{\pmb a}(x;t)$'s).
\end{remark}

\section{Conjectures on the coefficients $K_{\Omega \Lambda}(q,t)$} \label{secButler}
It does  not appear that a perfect analog of Proposition~\ref{propoKostka1} exists when $m>0$.  We can however formulate the following conjecture which states 
that a certain portion of the coefficient $K_{\Omega  \Lambda}(q,t)$ can be recovered from coefficients of lower degree and larger $m$.
\begin{conjecture} \label{conjecf1}
  For any entry $\lambda_j$ in $\lambda$, we have that
$$
 t^{\# \{ k \, | \,  \lambda_j \leq a_k \}}  \left( \begin{array}{ccccc}
 a_1 & \cdots & a_m & \vline &\lambda \\
 b_1 & \cdots & b_m  & \vline & \mu
\end{array}
 \right)-  \sum_{\mu_i} t^{\# \{ k \, | \,  \mu_i \leq b_k \}}
 \left( \begin{array}{cccccc}
  \lambda_j-1 &a_1 & \cdots & a_m & \vline &\lambda \setminus \lambda_j \\
  \mu_i -1  &  b_1 & \cdots & b_m & \vline & \mu \setminus \mu_i
\end{array}
\right)
\in \mathbb N[q,t]
$$
where the sum is over all distinct entries $\mu_i$ in $\mu$.
\end{conjecture}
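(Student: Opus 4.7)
The plan is to adapt the proof strategy of Proposition~\ref{propoKostka1}, where the raising operator $\tilde\Psi_N = \wp \circ \Psi_N \circ \wp^{-1}$ connected Kostka coefficients at different values of $m$. The key starting point is to view $\Lambda^+ = (\lambda_j - 1, a_1, \dots, a_m; \lambda \setminus \lambda_j)$ as an $(m+1)$-partition. Since $(\Lambda^+)^\square = (a_1, \dots, a_m; (\lambda \setminus \lambda_j) \cup \lambda_j) = \Lambda$ as an $m$-partition, and the $t$-factor from Proposition~\ref{propom1} simplifies to $t^{-\#\{i : a_i < \lambda_j\}}$, one obtains (using $\#\{k : \lambda_j \leq a_k\} + \#\{i : a_i < \lambda_j\} = m$) the identity
$$
t^m \, \tilde\Psi_N \, \wp(J_{\Lambda^+}) \;=\; t^{\#\{k : \lambda_j \leq a_k\}} \, i(\wp(J_\Lambda)),
$$
where $i: R_m \hookrightarrow R_{m+1}$ denotes the inclusion.

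Next, expand both sides in the $(m+1)$-symmetric Schur basis. On the right, Equation~\eqref{inclus} gives $i(\wp(J_\Lambda)) = \sum_\Omega K_{\Omega \Lambda}(q,t) \sum_{\Omega^\prime} s_{\Omega^\prime}(x;t)$, where the inner sum runs over all $\Omega^\prime$ obtained from $\Omega$ by placing an $(m{+}1)$-circle in a symmetric row (including the row of size $0$). Among these, the ones of the form $\Omega^\prime = (b_1, \dots, b_m, \mu_i; \mu \setminus \mu_i)$ pair naturally with the $\Omega^+ = (\mu_i - 1, b_1, \dots, b_m; \mu \setminus \mu_i)$ on the left. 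The positivity conjecture then predicts that $\tilde\Psi_N(s_{\Omega^+})$ decomposes as an $\mathbb N[t]$-nonnegative combination of $(m+1)$-symmetric Schur functions, whose coefficient of the paired $s_{\Omega^\prime}$ equals $t^{\#\{k : \mu_i \leq b_k\} - m}$. Equating coefficients of each $s_{\Omega^\prime}$ on both sides then yields the conjectured inequality, with the positive remainder accounting for both the other Schur contributions in $\tilde\Psi_N(s_{\Omega^+})$ and the cross-terms from different choices of $\mu_i$.

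The main obstacle is establishing this Schur-positive decomposition of $\tilde\Psi_N(s_{\Omega^+})$ in general. In the $m=0$ case (Proposition~\ref{propoKostka1}), the identity $\tilde\Psi_N(s_{(a_1;\lambda)}) = s_{\lambda \cup (a_1+1)}$ was established directly via the $k$-basis and a stability argument for Hall--Littlewood polynomials, taking advantage of the fact that every $1$-partition is dominant. For $m \geq 1$, any combinatorial description of $\tilde\Psi_N$ acting on an arbitrary, possibly non-dominant, $(m+1)$-symmetric Schur function must interact with the charge statistic of Section~\ref{sectab} underlying the coefficients $D_{\Lambda \Omega}(t)$ from Definition~\ref{defdualS}, and proving Schur-positivity at this level appears to be the central difficulty. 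A secondary route worth pursuing is induction on $m$, using the restriction $r: R_{m+1} \to R_m$ (Propositions~\ref{proporJ} and \ref{propoinclusion0}) to reduce to the base case furnished by Proposition~\ref{propoKostka1}; however, even this route requires carefully tracking the $t$-factors introduced by Proposition~\ref{propokostka}(1) and (5) when $\mu_i$ and $\lambda_j$ are transported between the symmetric and non-symmetric sides.
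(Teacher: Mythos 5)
You are attempting to prove Conjecture~\ref{conjecf1}, which the paper states without proof; there is therefore no argument of the author's to compare against, and a complete proof would be a genuine advance rather than a reconstruction. Your proposal is not such a proof. The reduction via Proposition~\ref{propom1} is set up correctly: with $\Lambda^+=(\lambda_j-1,a_1,\dots,a_m;\lambda\setminus\lambda_j)$ one indeed has $(\Lambda^+)^\square=\Lambda$ and $t^m\Psi_N J_{\Lambda^+}=t^{\#\{k\,\mid\,\lambda_j\le a_k\}}J_\Lambda$. But everything after that rests on two unestablished inputs: (i) Conjecture~\ref{mainconjec} itself, needed so that the discarded terms $K_{\Gamma^+\Lambda^+}(q,t)$ contribute nonnegatively to the remainder, and (ii) the claim that $\tilde\Psi_N(s_{\Omega^+})$ is Schur-positive over $\mathbb N[t]$ with the paired coefficient exactly $t^{\#\{k\,\mid\,\mu_i\le b_k\}-m}$. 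You acknowledge that (ii) is the central difficulty, and the paper offers no tool for it: the proof of \eqref{segunda} inside Proposition~\ref{propoKostka1} leans on the facts that every $1$-partition is dominant and that $\tilde\Psi_N(k_{(b_1;\mu)})=h_{b_1+1}s_\mu$ is a multiplicity-free sum of Schur functions, neither of which survives for $m\ge 1$.

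There is also a structural flaw independent of the missing positivity. In Proposition~\ref{propoKostka1} the sum over distinct $\mu_i$ arises because the several $1$-partitions $(\mu_i-1;\mu\setminus\mu_i)$ are all sent by $\tilde\Psi_N$ to one and the same Schur function $s_{(\mu\setminus\mu_i)\cup(\mu_i)}=s_\mu$, so collecting the coefficient of $s_\mu$ produces the entire sum at once. In your pairing, each distinct $\mu_i$ is matched with a \emph{different} target $s_{\Omega'}$, $\Omega'=(b_1,\dots,b_m,\mu_i;\mu\setminus\mu_i)$; comparing coefficients of a single $s_{\Omega'}$ can then only produce the single-term statement $t^{\#\{k\mid\lambda_j\le a_k\}}K_{\Omega\Lambda}-t^{\#\{k\mid\mu_i\le b_k\}}K_{\Omega^+\Lambda^+}\in\mathbb N[q,t]$, and a family of such single-term bounds does not imply the conjectured bound with the full sum subtracted. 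To recover the sum you would need every $\Omega^+$ (one for each distinct $\mu_i$) to contribute to the coefficient of one and the same basis element; since the image of $\Psi_N$ lies in $R_m$, the natural way to arrange this is to expand $\tilde\Psi_N(s_{\Omega^+})$ directly in the $m$-symmetric Schur basis and extract the coefficient of $s_\Omega$, rather than passing through the inclusion $i$ and the $(m+1)$-symmetric basis. As written, the proposal neither proves the conjecture nor correctly reduces it to the stated positivity property of $\tilde\Psi_N$.
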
  

The next conjecture tells us how Theorem~\ref{propokostka}(4) can be somehow extended when $a_{i} \neq a_{i+1}$.
\begin{conjecture}\label{conjecf2}
    For a fixed 
  $i<m$, let  $\tilde \Omega =s_i \Omega$ and
  $$
  A_i=\frac{\varepsilon^{(i)}_\Lambda(q,t)}{\varepsilon^{(i+1)}_\Lambda(q,t)}= q^{a_i-a_{i+1}}t^{r_\Lambda(i+1)-r_\Lambda(i)}
  $$
Then, if $b_i > b_{i+1}$, we have that
  \begin{equation}
    \frac{A_it^{-1} K_{\Omega \Lambda} (q,t)- K_{\tilde \Omega \Lambda}(q,t)}{A_it^{-1}-1} \in \mathbb N[q,t] \quad {\rm and} \quad
    \frac{K_{\Omega \Lambda} (q,t)- K_{\tilde \Omega \Lambda}(q,t)}{1-A_it^{-1}} \in \mathbb N[q,t]
  \end{equation}
Note that if $b_i< b_{i+1}$, we only need to switch $\Omega$ and $\tilde \Omega$ in the relations.
\end{conjecture}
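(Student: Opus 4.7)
The plan is to exploit the action of $T_i$ on both sides of $\wp(J_\Lambda)=\sum_\Omega K_{\Omega\Lambda}(q,t)\,s_\Omega(x;t)$. On the symmetric-function side, Corollary~\ref{coroTons} gives the $T_i$-action on $s_\Omega$; on the $m$-symmetric Macdonald side, formula \eqref{eqact} (together with its counterpart when $a_i<a_{i+1}$, the case $a_i=a_{i+1}$ being trivial since then $A_it^{-1}=1$ and Proposition~\ref{propokostka}(4) already yields the statement) expresses $T_i J_\Lambda$ as a linear combination of $J_\Lambda$ and $J_{\tilde\Lambda}$ whose coefficients are rational functions of $A_i=q^{a_i-a_{i+1}}t^{r_\Lambda(i+1)-r_\Lambda(i)}$ and $t$ via the identification $q^a t^{\ell+1}=A_i$ for the distinguished cell. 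Since $\wp$ commutes with $T_i$ (the plethysm only alters the symmetric part), equating the coefficients of $s_\Omega$ and $s_{\tilde\Omega}$ on the two sides of $T_i\wp(J_\Lambda)=\cdots$ produces two scalar identities relating $K_{\Omega\Lambda}$, $K_{\tilde\Omega\Lambda}$, $K_{\Omega\tilde\Lambda}$, $K_{\tilde\Omega\tilde\Lambda}$.

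I would then reformulate the statement as the existence of $P_{\Omega\Lambda},Q_{\Omega\Lambda}\in\mathbb N[q,t]$ with
\[
K_{\Omega\Lambda}=P_{\Omega\Lambda}+Q_{\Omega\Lambda}
\qquad\text{and}\qquad
K_{\tilde\Omega\Lambda}=P_{\Omega\Lambda}+A_it^{-1}\,Q_{\Omega\Lambda},
\]
a direct algebraic check showing that, under such a decomposition, the first quotient in the statement equals $P_{\Omega\Lambda}$ and the second equals $Q_{\Omega\Lambda}$. The decomposition is naturally suggested by splitting the pair contribution
\[
K_{\Omega\Lambda}\,s_\Omega+K_{\tilde\Omega\Lambda}\,s_{\tilde\Omega}=P_{\Omega\Lambda}(s_\Omega+s_{\tilde\Omega})+Q_{\Omega\Lambda}(s_\Omega+A_it^{-1}\,s_{\tilde\Omega}),
\]
where $s_\Omega+s_{\tilde\Omega}$ is the $t$-eigenvector of $T_i$ by Corollary~\ref{coroTons} and the second summand is forced by the eigenvalue ratio $A_i$ of $Y_i/Y_{i+1}$.

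The hard part is to show that $P_{\Omega\Lambda}$ and $Q_{\Omega\Lambda}$ actually lie in $\mathbb N[q,t]$. The Hecke-algebra identities from the first paragraph give a linear system that can, in principle, be solved for $P_{\Omega\Lambda}$ and $Q_{\Omega\Lambda}$, but such a solution only yields expressions in $\mathbb Q(q,t)$; in particular, even the divisibility part of the conjecture (that $A_it^{-1}-1$ is a genuine factor of $A_it^{-1}K_{\Omega\Lambda}-K_{\tilde\Omega\Lambda}$) is not automatic, because $A_it^{-1}$ is a fixed monomial in $q$ and $t$ rather than a free parameter. The main obstacle is therefore a positivity input that Hecke-algebra formalism on its own cannot provide.

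A natural next attack would be to start from the $t=1$ tableau interpretation promised in \cite{CL}, where $K_{\Omega\Lambda}(1,1)$ counts standard tableaux of shape $\pmb b\cup\mu$, and attempt to refine it to a $(q,t)$-statistic on circled tableaux under which the pair $(\Omega,\tilde\Omega)$ corresponds to a bijective ``coupled/uncoupled'' dichotomy: the coupled side contributing $P_{\Omega\Lambda}$ and the uncoupled side carrying an extra weight $A_it^{-1}$. Any such refinement would simultaneously shed light on Conjecture~\ref{mainconjec} itself, which is why I expect this last step, not the derivation of the linear identities, to be the genuine bottleneck.
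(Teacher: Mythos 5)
The statement you were asked to prove is labelled a \emph{conjecture} in the paper (it sits in the section ``Conjectures on the coefficients $K_{\Omega\Lambda}(q,t)$''), and the paper offers no proof of it --- only the heuristic comparison with Butler's rule and, implicitly, computer experimentation. So there is no proof of the paper's to compare your attempt against, and, as you yourself acknowledge in your last two paragraphs, your proposal does not close the gap either: it is an honest reduction, not a proof.

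Two remarks on the substance of your reduction. First, the reformulation of the two memberships as the existence of $P_{\Omega\Lambda},Q_{\Omega\Lambda}\in\mathbb N[q,t]$ with $K_{\Omega\Lambda}=P_{\Omega\Lambda}+Q_{\Omega\Lambda}$ and $K_{\tilde\Omega\Lambda}=P_{\Omega\Lambda}+A_it^{-1}Q_{\Omega\Lambda}$ is correct and is a clean way to package the conjecture, but it is exactly equivalent to the conjecture; nothing is gained toward establishing it. Second, the Hecke-algebra identities you propose to extract are weaker than you suggest. Applying $T_i$ to $\wp(J_\Lambda)=\sum_\Omega K_{\Omega\Lambda}s_\Omega$ and using \eqref{eqact} together with Corollary~\ref{coroTons} produces, for a pair $(\Omega,\tilde\Omega)$ with $b_i>b_{i+1}$, two scalar equations involving the \emph{four} quantities $K_{\Omega\Lambda}$, $K_{\tilde\Omega\Lambda}$, $K_{\Omega\tilde\Lambda}$, $K_{\tilde\Omega\tilde\Lambda}$ (this is how the paper proves Proposition~\ref{propokostka}(5), where the degeneration $b_i=b_{i+1}$ collapses the system to one unknown ratio). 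For $b_i\neq b_{i+1}$ the system does not determine the ratio $K_{\tilde\Omega\Lambda}/K_{\Omega\Lambda}$, so even the divisibility assertion --- that $A_it^{-1}-1$ divides $A_it^{-1}K_{\Omega\Lambda}-K_{\tilde\Omega\Lambda}$ in $\mathbb Q[q,t]$ --- does not follow from this formalism; note also that $A_it^{-1}$ is a fixed monomial, so ``divisibility by $A_it^{-1}-1$'' is not even a constraint that survives specialization. Finally, the case $a_i=a_{i+1}$ is not ``trivial'': there $A_it^{-1}=1$ and both displayed quotients are of the form $0/0$ (since $K_{\tilde\Omega\Lambda}=K_{\Omega\Lambda}$ by Proposition~\ref{propokostka}(4)), so the conjecture should be read as implicitly assuming $a_i\neq a_{i+1}$. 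The genuine missing ingredient is, as you say, a positivity mechanism --- most plausibly a $(q,t)$-statistic on the circled standard tableaux of \cite{CL} admitting the coupled/uncoupled dichotomy you describe --- and that is precisely what is open.
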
  
Assuming that Conjecture~\ref{mainconjec} holds, the rule essentially states that in order to obtain $K_{\tilde \Omega  \Lambda}(q,t)$ from $K_{\Omega \Lambda}(q,t)$,
one simply needs to multiply each monomial in $K_{\Omega  \Lambda}(q,t)$ by either $A_i t^{-1}$ or $1$.  Equivalently, in a given combinatorial interpretation using standard tableaux of the coefficients $K_{\Omega \Lambda}(q,t)$, the statistic associated to a given standard tableau would either gain $A_i t^{-1}$ or remain invariant when going from $\Lambda$ to $\tilde \Lambda$.  This phenomenon is very reminiscent of  
Butler's rule on $(q,t)$-Kostka coefficients which relates the coefficients
$K_{\mu \lambda}(q,t)$ and $K_{\mu \nu}(q,t)$, where $\nu$ is a partition that can be obtained from $\lambda$ by moving exactly one box from a given position to another.

The next conjecture is similar to the previous one. It relates this time the entries $ K_{\Omega \Lambda} (q,t)$ and $K_{\tilde \Omega \tilde \Lambda}(q,t)$.
\begin{conjecture} \label{conjecf3}
  For a fixed 
  $i<m$, let  $\tilde \Lambda =s_i \Lambda$ and $\tilde \Omega =s_i \Omega$.

  \noindent (I)  If $a_i>a_{i+1}$ and $b_i > b_{i+1}$, we have that
  \begin{equation}
    \frac{t K_{\Omega \Lambda} (q,t)- K_{\tilde\Omega \tilde\Lambda}(q,t)}{t-1} \in \mathbb N[q,t] \quad {\rm and} \quad
    \frac{K_{\Omega \Lambda} (q,t)- K_{\tilde\Omega \tilde\Lambda}(q,t)}{1-t} \in \mathbb N[q,t]
  \end{equation}

\noindent (II)  If $a_i>a_{i+1}$ and $b_i < b_{i+1}$, we have that
  \begin{equation}
    \frac{t^2 K_{\Omega \Lambda} (q,t)- K_{\tilde \Omega \tilde \Lambda}(q,t)}{t-1} \in \mathbb N[q,t] \quad {\rm and} \quad
    \frac{tK_{\Omega \Lambda} (q,t)- K_{\tilde \Omega \tilde\Lambda}(q,t)}{1-t} \in \mathbb N[q,t]
  \end{equation}
Note again that if $a_i<a_{i+1}$, we simply need to interchange the roles of $\Lambda$ and $\tilde \Lambda$ in the relations. 
\end{conjecture}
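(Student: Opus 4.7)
The plan is to derive the divisibilities from the action of the Hecke generator $T_i$ on $\wp(J_\Lambda)$ combined with its action on the $m$-symmetric Schur basis. Since $a_i>a_{i+1}$, equation~\eqref{eqact} gives
\[
T_i J_\Lambda(x;q,t) \;=\; \alpha\, J_\Lambda(x;q,t) + \beta\, J_{\tilde\Lambda}(x;q,t),
\]
with $\alpha = (t-1)/(1-u^{-1})$ and $\beta = (1-u/t)/(1-u)$, where $u = q^a t^{\ell+1}$ and $a,\ell$ are the arm- and leg-lengths of the cell at the intersection of the row of the $i$-circle and the column of the $(i+1)$-circle of $\Lambda$. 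Applying $\wp$ (which commutes with $T_i$), substituting $\wp(J_\Lambda) = \sum_\Gamma K_{\Gamma\Lambda}\, s_\Gamma$, and using Corollary~\ref{coroTons} on the right, I would extract the coefficients of $s_\Omega$ and $s_{\tilde\Omega}$; case~(I) ($b_i>b_{i+1}$) and case~(II) ($b_i<b_{i+1}$) use different branches of Corollary~\ref{coroTons} and must be treated separately.

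Solving the resulting $2\times 2$ linear system in each case yields an explicit polynomial identity. In case~(I) it takes the form
\[
(t-u)\, K_{\tilde\Omega\tilde\Lambda}(q,t) \;=\; t(1-u)\, K_{\Omega\Lambda}(q,t) + t(t-1)\, K_{\tilde\Omega\Lambda}(q,t),
\]
and in case~(II) one obtains a structurally parallel identity with extra factors of $t$ and $u$ on the right. Neither identity isolates $K_{\tilde\Omega\tilde\Lambda}$ in terms of $K_{\Omega\Lambda}$ alone, so I would next invoke Conjecture~\ref{conjecf2}, which predicts a positive decomposition $K_{\Omega\Lambda} = C+D$ with $K_{\tilde\Omega\Lambda} = (u/t)\,C+D$, after checking that the quantity $A_i$ of Conjecture~\ref{conjecf2} coincides with $u$ (this reduces to the combinatorial identities $a = a_i-a_{i+1}$ and $\ell+1 = r_\Lambda(i+1)-r_\Lambda(i)$, which in turn follow from the arm/leg conventions and the top-to-bottom ordering of circles in the diagram of $\Lambda$). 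Substituting the decomposition into the Hecke identity, routine algebra collapses the right-hand side to $(t-u)(C+tD)$ in case~(I) and to $t(t-u)(uC+D)$ in case~(II), giving respectively $K_{\tilde\Omega\tilde\Lambda} = C+tD$ (which equals $tA+B$ with $A=D$, $B=C$) and $K_{\tilde\Omega\tilde\Lambda} = tuC+tD$ (which equals $t^2A+tB$ with $A=(u/t)C$, $B=D$); in both cases $A,B\in\mathbb N[q,t]$, precisely the content of the two stated divisibilities.

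The main obstacle is twofold. First, the argument above is conditional on Conjecture~\ref{conjecf2}, whose proof is itself not available. Second, and more fundamentally, the positivity claim in Conjecture~\ref{conjecf2} is a monomial-by-monomial rigidity statement: it asserts that every monomial $c\, q^i t^j$ of $K_{\Omega\Lambda}$ is sent to either $c\, q^i t^j$ or $c\,(u/t)\, q^i t^j$ in $K_{\tilde\Omega\Lambda}$, and no purely algebraic manipulation of the Hecke identities can produce such a refinement. The natural route, as with Butler's rule on the ordinary $(q,t)$-Kostka coefficients, is through a combinatorial model for $K_{\Omega\Lambda}(q,t)$ in which the transitions $\Omega \to \tilde\Omega$ and $\Lambda\to\tilde\Lambda$ act on each underlying object by well-defined shifts of the relevant statistic. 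The $t=1$ specialization of such a model is announced in \cite{CL}; producing its full $(q,t)$-refinement, or else invoking a Haiman-style geometric or representation-theoretic substitute, is the core step I would tackle last.
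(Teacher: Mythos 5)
The statement you are addressing is labeled a \emph{conjecture} in the paper (Section~\ref{secButler}); the paper offers no proof of it, so there is nothing on the paper's side to compare your argument against. What you have written is therefore best judged on its own terms: it is not a proof, but it is a correct conditional reduction, and you are upfront about that. I checked your algebra. Writing $u=q^{a}t^{\ell+1}=A_i$ (the identification with the $A_i$ of Conjecture~\ref{conjecf2} is right, since $\bar\eta_i/\bar\eta_{i+1}=q^{a_i-a_{i+1}}t^{r_\Lambda(i+1)-r_\Lambda(i)}$ equals the arm/leg expression used in \eqref{eqact}), the coefficient extraction against Corollary~\ref{coroTons} gives, in case (I), $(t-u)K_{\tilde\Omega\tilde\Lambda}=t(1-u)K_{\Omega\Lambda}+t(t-1)K_{\tilde\Omega\Lambda}$, and in case (II), $(t-u)K_{\tilde\Omega\tilde\Lambda}=t^2(1-u)K_{\Omega\Lambda}+tu(t-1)K_{\tilde\Omega\Lambda}$. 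Substituting the decomposition $K_{\Omega\Lambda}=C+D$, $K_{\tilde\Omega\Lambda}=(u/t)C+D$ (resp.\ its $\Omega\leftrightarrow\tilde\Omega$ version) predicted by Conjecture~\ref{conjecf2} does collapse these to $K_{\tilde\Omega\tilde\Lambda}=C+tD$ and $K_{\tilde\Omega\tilde\Lambda}=tuC+tD$, which yield exactly the four stated quotients with nonnegative values $C$, $D$, $uC$, $tD$. So your derivation establishes the genuinely useful structural fact that Conjecture~\ref{conjecf3} is a consequence of Conjecture~\ref{conjecf2}, something the paper does not state.

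The gap is the one you name yourself: the whole argument is conditional on Conjecture~\ref{conjecf2}, which is equally open, and the Hecke-algebra identities alone cannot supply the monomial-by-monomial positivity that \ref{conjecf2} encodes --- they only pin down the two-term linear relations above, which are symmetric-function identities with no positivity content. Since the paper provides no combinatorial model for $K_{\Omega\Lambda}(q,t)$ beyond the $t=1$ specialization announced in \cite{CL}, the core positivity step remains out of reach, and your text should present the result as a reduction (``\ref{conjecf2} implies \ref{conjecf3}'') rather than as a proof attempt of \ref{conjecf3} itself.
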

\begin{acknow} We thank P. Mathieu, J. Morse and L. Pena for their useful comments. We also thank D. Orr for letting us know about the work contained in  \cite{BG,BG2}, and we  are grateful to B. Goodberry for confirming that the integral form in \cite{BG} was the same as ours.
   \end{acknow}  
\begin{appendix} 
\section{Some additional properties of the $m$-Symmetric Macdonald polynomials}\label{secAppendix}
We prove in this appendix a few properties of the $m$-Symmetric Macdonald polynomials whose proofs were too technical to include in the main presentation.
From now on, we will  always work with a finite number $N$ of variables.  As such  $x$ will denote the variables $(x_1,\dots,x_N)$. Recall that
$ x_{(i)}$ stands for $(x_1,\dots,x_{i-1},x_{i+1},\dots,x_N)$.

The following lemma on non-symmetric Macdonald polynomials is probably already known.  The version given in \cite{Camilo} had stronger conditions but the proof is the same. We reproduce it for completeness. 
\begin{lemma} \label{Camilo}
Let $\eta=(\eta_1,\ldots,\eta_N)$ be a composition such that $\eta_i=0$ and $\eta_j\neq 0$ whenever $j > i$. Then 
\begin{equation}
  E_{\eta}(x)\Big|_{x_i=0}=E_{\eta_{(i)}} (x_{(i)})
\end{equation}
and
\begin{equation}
E_{\eta}(x_1,\ldots,x_N)\Big|_{x_j=0} =0 \quad {\rm if ~} j>i.
\end{equation}
where $\eta_{(i)}=(\eta_1,\ldots,\eta_{i-1}, \eta_{i+1},\ldots,\eta_N)$
stands for the composition $\eta$ without its $i$-th entry.
\end{lemma}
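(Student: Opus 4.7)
The plan is to prove both claims simultaneously by downward induction on $i$, first establishing the vanishing statement at each level and then using it to deduce the specialization formula. The key ingredient is the following specialization identity for $T_j$, valid for any polynomial $f(x_1,\ldots,x_N)$:
$$
(T_j f)\bigl|_{x_{j+1}=0}(x_1,\ldots,x_j,x_{j+2},\ldots,x_N)
= t\,f(x_1,\ldots,x_{j-1},0,x_j,x_{j+2},\ldots,x_N),
$$
which follows from \eqref{eqTi} by computing $\frac{tx_j-x_{j+1}}{x_j-x_{j+1}}\big|_{x_{j+1}=0}=t$: the two terms proportional to $t\,f|_{x_{j+1}=0}$ cancel, leaving $t\,(K_{j,j+1}f)\big|_{x_{j+1}=0}$, which is exactly the right-hand side above.

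The base case $i=N$ is immediate: the vanishing claim is vacuous, and the specialization formula is exactly the first branch of \eqref{property1}. For the inductive step, assume both claims hold at position $i+1$. To establish the vanishing claim $E_\eta\big|_{x_j=0}=0$ for $j>i$, I induct downward on $j$ starting from $j=N$, where the second branch of \eqref{property1} applies because $\eta_N\neq 0$. For $i<j<N$, if $\eta_j=\eta_{j+1}$ then $E_\eta$ is symmetric in $x_j,x_{j+1}$ (Remark~\ref{remarksym}), so vanishing at $x_j=0$ is equivalent to vanishing at $x_{j+1}=0$, which holds by the inner induction. If $\eta_j\neq\eta_{j+1}$, \eqref{property2} gives $T_j E_\eta = A\,E_\eta + B\,E_{s_j\eta}$ with $B\neq 0$; applying the specialization identity and using the inner induction hypothesis (both $\eta$ and $s_j\eta$ still satisfy the hypotheses at position $i$, since $s_j$ acts only on positions $j,j+1>i$) forces the right-hand side of the specialized version of \eqref{property2} to vanish, hence $t\,E_\eta(x_1,\ldots,x_{j-1},0,x_j,x_{j+2},\ldots,x_N)=0$, which upon relabeling $x_j\mapsto x_{j+1}$ gives $E_\eta\big|_{x_j=0}=0$.

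For the specialization claim at position $i$, I apply \eqref{property2} in the case $\eta_i=0<\eta_{i+1}$:
$$T_i E_\eta=\frac{t-1}{1-\delta_{i,\eta}^{-1}}E_\eta+t\,E_{s_i\eta},$$
and specialize both sides at $x_{i+1}=0$. The term $E_\eta\big|_{x_{i+1}=0}$ vanishes by the vanishing claim just established. The term $E_{s_i\eta}\big|_{x_{i+1}=0}$ equals $E_{\eta_{(i)}}(x_{(i+1)})$ by the outer induction hypothesis: the composition $s_i\eta=(\eta_1,\ldots,\eta_{i-1},\eta_{i+1},0,\eta_{i+2},\ldots,\eta_N)$ has its zero at position $i+1$ with $(s_i\eta)_{(i+1)}=\eta_{(i)}$. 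The specialization identity transforms the left-hand side into $t\,E_\eta(x_1,\ldots,x_{i-1},0,x_i,x_{i+2},\ldots,x_N)$, so after canceling $t$ and relabeling $x_i\mapsto x_{i+1}$ on both sides I obtain $E_\eta\big|_{x_i=0}=E_{\eta_{(i)}}(x_{(i)})$ as desired.

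The main obstacle I anticipate is simply the careful derivation and verification of the specialization-swap identity for $T_j$ from the explicit rational form \eqref{eqTi}; once that identity is in hand, the rest of the argument is straightforward bookkeeping across the three cases of \eqref{property2} and the two nested inductions.
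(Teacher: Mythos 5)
Your proof is correct and follows essentially the same route as the paper's: a downward induction in which the two expressions for $T_j E_\eta$ — the explicit form \eqref{eqTi} specialized at $x_{j+1}=0$ and the expansion \eqref{property2} — are compared so as to transfer the vanishing and specialization statements from position $j+1$ to position $j$. The only cosmetic differences are that you isolate the specialization identity $(T_jf)\big|_{x_{j+1}=0}=t\,(K_{j,j+1}f)\big|_{x_{j+1}=0}$ as a standalone step and treat the case $\eta_j=\eta_{j+1}$ separately via the symmetry of Remark~\ref{remarksym}, whereas the paper folds everything into one inline computation.
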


\begin{proof} We proceed by induction. We know from \eqref{property1}
that 
\begin{equation}
  E_{\eta}(x)\Big|_{x_N=0}=E_{\eta_{(N)}} (x_{(N)}) \qquad {\rm when~} i=N
\end{equation}
while 
\begin{equation}
E_{\eta}(x)\Big|_{x_{N}=0}=0 \qquad {\rm when~} i<N {\rm~and~}j=N
\end{equation}
Now,  suppose by induction that
\begin{equation} \label{toprove1}
  E_{\eta}(x)\Big|_{x_{k}=0}=E_{\eta_{(k)}} (x_{(k)}) \qquad {\rm when~} i=k
\end{equation}
while 
\begin{equation} \label{toprove2}
E_{\eta}(x)\Big|_{x_{k}=0}=0 \qquad {\rm when~} i<k {\rm ~and~} j=k
\end{equation}
We thus  need to prove that \eqref{toprove1} and \eqref{toprove2}
still hold when $k$ is replaced by $k-1$.   We first consider the case \eqref{toprove1}. Suppose that $i=k-1$.  Since $\eta_{k-1}=0$ and $\eta_k \neq 0$ by hypothesis, we have from \eqref{property2} that   
\begin{equation} \label{eqrhs1}
  T_{k-1} E_\eta (x) = c_\eta \, E_\eta (x)+tE_{s_{k-1} \eta}(x),
\end{equation}
where $c_\eta$ is an irrelevant constant.  By definition of $T_{k-1}$, we also obtain
\begin{equation} \label{eqrhs2}
  T_{k-1} E_{\eta}(x)=tE_\eta(x) +\frac{t x_{k-1}-x_k}{x_{k-1}-x_k} \bigl(E_{\eta}(x_1,\ldots,x_{k-2},x_k,x_{k-1},x_{k+1},\ldots,x_N)-E_{\eta}(x) \bigr).
\end{equation}
Since $i=k-1$, we have by hypothesis that $E_{\eta}(x)\big|_{x_k=0}=0$
and that
\begin{equation}
  E_{s_{k-1} \eta}(x)\Big|_{x_k=0}=E_{\eta_{(k-1)}}(x_{(k)}).
\end{equation}
Equating the r.h.s. of 
\eqref{eqrhs1} and \eqref{eqrhs2}, we thus get after letting $x_k=0$ that
\begin{equation}
t  E_{\eta}(x_1,\ldots,x_{k-2},0,x_{k-1},x_{k+1},\ldots,x_N) = t E_{\eta_{(k-1)}}(x_{(k)})
\end{equation}  
which, after the change of variables $x_{k-1} \mapsto x_k$, is equivalent to
\begin{equation}
 E_{\eta}(x)\Big|_{x_{k-1}=0} =  E_{\eta_{(k-1)}}(x_{(k-1)})\, .
\end{equation}  
Hence \eqref{toprove1} holds when $k$ is replaced by $k-1$.

We now prove that \eqref{toprove2} holds when $k$ is replaced by $k-1$.  Since $i<j=k-1$, we have by hypothesis that $\eta_{k-1} \neq 0$ and $\eta_k \neq 0$.  We thus obtain by induction that
\begin{equation}
  E_{\eta}(x)\Big|_{x_k=0}=0 \qquad {\rm and} \qquad
  E_{s_{k-1}\eta}(x)\Big|_{x_k=0}=0 .
\end{equation}
Therefore, after again equating the r.h.s. of \eqref{eqrhs1} and \eqref{eqrhs2} and letting $x_k=0$, we get
\begin{equation}
t  E_{\eta}(x_1,\ldots,x_{k-2},0,x_{k-1},x_{k+1},\ldots,x_N)=0\, .
\end{equation}  
But, after 
the change of variables $x_{k-1} \mapsto x_k$, this is amounts to $E_{\eta}(x)\big|_{x_{k-1}=0}=0$.
\end{proof}

\begin{lemma} \label{lemmasymrel}
If $ m+1<i<N$ and $\eta_i > \eta_{i+1}$, then
\begin{equation} \label{symrel}
  \mathcal S^t_{m+1,N} E_\eta =  \frac{(1-tA_i)}{t(1-A_i)}  \mathcal S^t_{m+1,N}  E_{s_i \eta}
\end{equation}
where $A_i=\bar \eta_i/\bar \eta_{i+1}$.
\end{lemma}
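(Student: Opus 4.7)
The plan is to apply the partial $t$-symmetrizer $\mathcal S^t_{m+1,N}$ to the action formula~\eqref{property2} and then solve the resulting linear relation. Because $\mathcal S^t_{m+1,N}$ is annihilated by $T_i-t$ from both the left and the right whenever $m+2\le i\le N-1$, applying it to \eqref{property2} produces a two-term identity that is easily solved for $\mathcal S^t_{m+1,N}E_\eta$ in terms of $\mathcal S^t_{m+1,N}E_{s_i\eta}$.

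The first step is to record the right-sided analog of~\eqref{TionSymt}: for $i\in\{m+2,\dots,N-1\}$,
\[
\mathcal S^t_{m+1,N}\,T_i \;=\; t\,\mathcal S^t_{m+1,N}.
\]
This is a standard Hecke-algebra identity. Writing $\mathcal S^t_{m+1,N}=\sum_\sigma T_\sigma$ and grouping the elements of $S_{N-m-1}$ into pairs $\{\sigma,\sigma'\}$ exchanged by right multiplication by the transposition associated to $T_i$, the quadratic relation $T_i^2=(t-1)T_i+t$ shows that each such pair contributes $t(T_\sigma+T_{\sigma'})$ to $\mathcal S^t_{m+1,N}T_i$, and summing yields the identity.

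With this in hand, I would apply $\mathcal S^t_{m+1,N}$ to~\eqref{property2} in the case $\eta_i>\eta_{i+1}$ (so that $\delta_{i,\eta}=A_i$), and use the identity of the previous step on the left to obtain
\[
t\,\mathcal S^t_{m+1,N}E_\eta \;=\; \frac{t-1}{1-A_i^{-1}}\,\mathcal S^t_{m+1,N}E_\eta \;+\; \frac{(1-tA_i)(1-t^{-1}A_i)}{(1-A_i)^2}\,\mathcal S^t_{m+1,N}E_{s_i\eta}.
\]
Isolating $\mathcal S^t_{m+1,N}E_\eta$, the coefficient on the left simplifies to $(A_i-t)/(A_i-1)$; dividing it into the coefficient on the right and using $1-t^{-1}A_i=t^{-1}(t-A_i)$ collapses everything to the claimed factor $(1-tA_i)/(t(1-A_i))$. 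The whole argument is mechanical once the right-sided symmetrizer identity is available, so the only genuine point of care is Step~1, which is not explicit in the preliminaries but is immediate from the Hecke-algebra basis.
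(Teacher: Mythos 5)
Your proof is correct and follows exactly the paper's argument: apply the right-sided identity $\mathcal S^t_{m+1,N}T_i = t\,\mathcal S^t_{m+1,N}$ (which the paper asserts as ``similar to \eqref{TionSymt}'' and you justify in more detail via the pairing/quadratic-relation argument) to formula \eqref{property2} in the case $\eta_i>\eta_{i+1}$, and solve the resulting two-term relation. The algebraic simplification of the coefficients matches the paper's computation.
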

\begin{proof} For $i>m+1$, we have $\mathcal S^t_{m+1} T_i = t \mathcal S^t_{m+1}$(this is similar to \eqref{TionSymt}). Hence, from \eqref{property2}, we get
\begin{equation} \label{eqS}
t \mathcal S^t_{m+1,N}  E_\eta =\mathcal S^t_{m+1,N} T_i E_\eta= \mathcal S^t_{m+1,N} \left( \frac{(t-1)}{(1-A_i^{-1})} E_\eta + \frac{(1-tA_i)(1-t^{-1}A_i)}{(1-A_i)^2} E_{s_i \eta} \right) 
\end{equation}
Comparing the l.h.s and the r.h.s of \eqref{eqS}, we then obtain
$$
\frac{t(1-t^{-1}A_i)}{(1-A_i)} \mathcal S^t_{m+1,N} E_\eta =  \frac{(1-tA_i)(1-t^{-1}A_i)}{(1-A_i)^2} \mathcal S^t_{m+1,N} E_{s_i \eta} 
$$
which readily implies our claim.
\end{proof}

\begin{lemma}  
  Suppose that $\eta_{m+2}=\eta_{m+3}=\cdots = \eta_{m+n+1}=0$ (with $n \geq 1$) are the only zero entries in $\eta=(\eta_1,\dots,\eta_N)$ to the right of $\eta_{m+1}>0$.
  Then
\begin{equation} \label{bigresult}
\mathcal S^t_{m+2,N} E_{\eta}(x) \Big|_{x_{m+1}=0}= \frac{A_{m+1} (1-t^n)}{t (1-t^{n-1}A_{m+1})}\mathcal S^t_{m+2,N} E_{\eta_{(m+2)}}(x_{(m+1)})
\end{equation}
where $A_{m+1}=\bar \eta_{m+1}/\bar \eta_{m+2}$.
\end{lemma}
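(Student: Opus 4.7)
The plan is to prove this by induction on $n$. For the base case $n=1$, I would apply \eqref{property2} at $i=m+1$ (third case, since $\eta_{m+1}>0=\eta_{m+2}$) to get $T_{m+1}E_\eta=\alpha E_\eta+\beta E_{s_{m+1}\eta}$ with $\alpha=(t-1)/(1-A_{m+1}^{-1})$ and $\beta=(1-tA_{m+1})(1-t^{-1}A_{m+1})/(1-A_{m+1})^2$, then restrict both sides to $x_{m+1}=0$. Using \eqref{eqTi} together with the observation that $(tx_{m+1}-x_{m+2})/(x_{m+1}-x_{m+2})$ specializes to $1$ at $x_{m+1}=0$, the left-hand side becomes $(t-1)E_\eta|_{x_{m+1}=0}+E_\eta(x_1,\dots,x_m,x_{m+2},0,x_{m+3},\dots,x_N)$. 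For $n=1$, Lemma~\ref{Camilo} applies directly to both $\eta$ (at $i=m+2$) and $s_{m+1}\eta$ (at $i=m+1$), so the last displayed term and $E_{s_{m+1}\eta}|_{x_{m+1}=0}$ each equal $E_{\eta_{(m+2)}}(x_{(m+1)})$. Solving the resulting linear equation for $E_\eta|_{x_{m+1}=0}$ and simplifying would yield the coefficient $A_{m+1}(1-t)/(t(1-A_{m+1}))$; applying $\mathcal S^t_{m+1,N}$ (which leaves $x_{m+1}$ untouched) would then finish the base case.

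For $n\geq 2$, the first zero-substitution term $E_\eta(x_1,\dots,x_m,x_{m+2},0,x_{m+3},\dots,x_N)$ will still simplify to $E_{\eta_{(m+2)}}(x_{(m+1)})$: since $E_\eta$ is symmetric in $x_{m+2},\dots,x_{m+n+1}$ by Remark~\ref{remarksym} (those composition entries are all zero), I would cyclically relocate the inserted $0$ from slot $m+2$ to slot $m+n+1$, where Lemma~\ref{Camilo} then applies and yields $E_{\eta_{(m+n+1)}}(x_{(m+1)})=E_{\eta_{(m+2)}}(x_{(m+1)})$ (the $n$ interior zeros are interchangeable). The obstruction is the term $\mathcal S^t_{m+1,N}E_{s_{m+1}\eta}|_{x_{m+1}=0}$: because $s_{m+1}\eta$ has $\eta_{m+1}\neq 0$ at slot $m+2$, the symmetry argument cannot jump from slot $m+1$ to the trailing zero block at slots $m+3,\dots,m+n+1$.

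To handle this term I would apply $\mathcal S^t_{m+1,N}$ to the entire restricted identity and use Lemma~\ref{lemmasymrel} iteratively to slide the nonzero entry $\eta_{m+1}$ from slot $m+2$ through slots $m+3,\dots,m+n+1$: at each step the pair $(\eta_{m+1},0)$ provides the required strict decrease, so Lemma~\ref{lemmasymrel} contributes a factor $\frac{1-tA^{(k)}}{t(1-A^{(k)})}$. Once $\eta_{m+1}$ reaches slot $m+n+1$, the resulting composition has a contiguous zero block at slots $m+1,\dots,m+n$, so the symmetry-plus-Lemma~\ref{Camilo} argument from the first term then reduces $\mathcal S^t_{m+1,N}E_{\mathrm{slid}}|_{x_{m+1}=0}$ to a scalar multiple of $\mathcal S^t_{m+1,N}E_{\eta_{(m+2)}}(x_{(m+1)})$. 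Substituting these evaluations back into the symmetrized identity produces a linear equation whose solution is the claimed formula.

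The main obstacle is the bookkeeping of the telescoping product. Each ratio $A^{(k)}$ appearing along the chain of Lemma~\ref{lemmasymrel} swaps differs from $A_{m+1}$ by a specific power of $t$, determined by how the circles labeled $m+2,\dots,m+n+1$ redistribute among the rows of size zero in the successive diagrams. Verifying that $\prod_{k=1}^{n-1}\frac{1-tA^{(k)}}{t(1-A^{(k)})}$, once combined with the initial $\alpha$ and $\beta$, collapses to the compact rational $A_{m+1}(1-t^n)/(t(1-t^{n-1}A_{m+1}))$ is the technical crux; this cancellation directly reflects the diagrammatic row convention from Section~\ref{secnonsym}.
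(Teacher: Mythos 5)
Your proposal follows essentially the same route as the paper's proof: the same $T_{m+1}$-identity obtained by comparing \eqref{property2} with the explicit form \eqref{eqTi} at $x_{m+1}=0$, the same use of Lemma~\ref{Camilo} (via the symmetry of $E_\eta$ in $x_{m+2},\dots,x_{m+n+1}$) for the swap term, and the same iterated application of Lemma~\ref{lemmasymrel} to slide $\eta_{m+1}$ through the zero block, with the factor $A_{m+1}$ picking up a power of $t$ at each step. The only substantive difference is that you defer the final telescoping computation (including the return swaps needed to land on $\eta_{(m+2)}$ rather than the slid composition), which the paper carries out explicitly and which does collapse to the stated rational function.
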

\begin{proof} 
From \eqref{property2}, we get
$$
T_{m+1} E_{\eta} \Big |_{x_{m+1}=0} = \frac{(t-1)}{(1-A_{m+1}^{-1})} E_\eta \Big |_{x_{m+1}=0} + \frac{(1-tA_{m+1})(1-t^{-1}A_{m+1})}{(1-A_{m+1})^2} E_{s_{m+1} \eta} \Big |_{x_{m+1}=0}
$$
But we also have directly using \eqref{eqTi} that
$$
T_{m+1} E_{\eta} \Big |_{x_{m+1}=0} =(t-1)   E_\eta \Big |_{x_{m+1}=0}  +  E_{\eta} \Big |_{(x_{m+1},x_{m+2}) \mapsto (x_{m+2},0)} 
$$
The previous two equations then imply that
\begin{equation} \label{casegen}
\frac{(1-t)}{(1-A_{m+1})}  E_\eta \Big |_{x_{m+1}=0}=  E_{ \eta} \Big |_{(x_{m+1},x_{m+2}) \mapsto (x_{m+2},0)}   -\frac{(1-tA_{m+1})(1-t^{-1}A_{m+1})}{(1-A_{m+1})^2} E_{s_{m+1} \eta} \Big |_{x_{m+1}=0} 
\end{equation}
First, suppose that $n=1$ (that is, that there is only one zero entry to the right of $\eta_{m+1}$).  From Lemma~\ref{Camilo}, we have
$$
E_{ \eta} \Big |_{(x_{m+1},x_{m+2}) \mapsto (x_{m+2},0)} = E_{\eta_{(m+2)}}(x_{(m+1)}) \qquad {\rm and } \qquad E_{s_{m+1} \eta} \Big |_{x_{m+1}=0} = E_{\eta_{(m+2)}}(x_{(m+1)})
$$
Using those equalities in \eqref{casegen} gives after some simplifications
\begin{equation} 
  \frac{(1-t)}{(1-A_{m+1})}  E_\eta \Big |_{x_{m+1}=0}=
  \frac{A_{m+1}(1-t)^2}{t(1-A_{m+1})^2} E_{\eta_{(m+2)}} (x_{(m+1)}) 
\end{equation}
and it is then immediate that \eqref{bigresult} holds when $n=1$.  When $n>1$, we need to work with   $\mathcal S^t_{m+1}$ on the left from the start to prove our claim. Using \eqref{symrel} again and again (and realizing that each time $\eta_{m+1}$ is switched with the zero to its right, the corresponding factor $A_{m+1}$ is multiplied by $t$), we have
$$
\mathcal S^t_{m+2,N} E_{s_{m+1} \eta} \Big |_{x_{m+1}=0}   =  \frac{(1-t^2A_{m+1}) \cdots (1-t^n A_{m+1})}{t^{n-1}(1-tA_{m+1}) \cdots (1-t^{n-1}A_{m+1})}  \mathcal S^t_{m+2,N}  E_{s_{m+n} \cdots s_{m+1}  \eta} \Big |_{x_{m+1}=0}
$$
 Lemma~\ref{Camilo} gives
$$
 E_{s_{m+n} \cdots s_{m+1}  \eta} \Big |_{x_{m+n}=0} =E_{(s_{m+n-1} \cdots s_{m+1}  \eta)_{(m+n)}}(x_{(m+n)})=
 E_{s_{m+n-1} \cdots s_{m+1}  \eta_{(m+2)}}(x_{(m+n)})
$$
which implies that
$$
E_{s_{m+n} \cdots s_{m+1}  \eta} \Big |_{x_{m+1}=0} = E_{s_{m+n-1} \cdots s_{m+1}  \eta_{(m+2)}} (x_{(m+1)})
$$
since $ E_{s_{m+n} \cdots s_{m+1}  \eta}$ is symmetric in the variables $x_{m+1}, \dots,x_{m+n}$ by Remark~\ref{remarksym}. 
 We thus obtain
$$
\mathcal S^t_{m+2,N} E_{s_{m+1} \eta} \Big |_{x_{m+1}=0}   =
 \frac{(1-t^2A_{m+1}) \cdots (1-t^n A_{m+1})}{t^{n-1}(1-tA_{m+1}) \cdots (1-t^{n-1}A_{m+1})}
  \mathcal S^t_{m+2,N}  E_{s_{m+n-1} \cdots s_{m+1}  \eta_{(m+2)}}(x_{(m+1)})  
$$
Now using \eqref{symrel} in the other direction (with a shift in $t$ due to the loss of a zero to the left), we get
$$
\mathcal S^t_{m+2,N} E_{s_{m+1} \eta} \Big |_{x_{m+1}=0}   =  \frac{(1-A_{m+1})(1-t^n A_{m+1})}{(1-tA_{m+1})(1-t^{n-1}A_{m+1})} \mathcal S^t_{m+2,N}  E_{\eta_{(m+2)}}(x_{(m+1)})  
$$
On the other hand,  using again Lemma~\ref{Camilo} yields 
$$
E_{ \eta} \Big |_{x_{m+n+1}=0} =E_{\eta_{(m+n+1)}}(x_{(m+n+1)})
$$
which implies this time that
$$
E_{ \eta} \Big |_{x_{m+2}=0} =E_{\eta_{(m+2)}}(x_{(m+2)})
$$
given that $E_{\eta}$ is symmetric in the variables  $x_{m+2}$ up to $x_{m+n+1}$
and that $\eta_{(m+n+1)}=\eta_{(m+2)}$.
Hence
$$
E_{ \eta} \Big |_{(x_{m+1},x_{m+2}) \mapsto (x_{m+2},0)} =E_{\eta_{(m+2)}}(x_{(m+2)}) \Big |_{x_{m+1} \mapsto x_{m+2}} =E_{\eta_{(m+2)}}(x_{(m+1)})  
$$
  Equation \eqref{casegen} then becomes
 $$
 \frac{(1-t)}{(1-A_{m+1})} \mathcal S^t_{m+2,N} E_\eta \Big |_{x_{m+1}=0}=
\left(  1   -\frac{(1-t^n A_{m+1})(1-t^{-1}A_{m+1})}{(1-t^{n-1}A_{m+1})(1-A_{m+1})} \right) \mathcal S^t_{m+2,N}  E_{\eta_{(m+2)}}(x_{(m+1)})  
 $$
which, after some straightforward manipulations, yields 
$$
 \frac{(1-t)}{(1-A_{m+1})}  \mathcal S^t_{m+2,N} E_\eta \Big |_{x_{m+1}=0}=
\frac{A_{m+1}(1-t^n )(1-t)}{t (1-t^{n-1}A_{m+1})(1-A_{m+1})} \mathcal S^t_{m+2,N}  E_{\eta_{(m+2)}}(x_{(m+1)})  
 $$
The claim \eqref{bigresult} then follows after performing the obvious cancellations. 
\end{proof}

\begin{proposition} \label{propMacdo00}
  For $\Lambda=(a_1,\dots,a_m ; \lambda )$ and  $ \Lambda^{0} =(a_1,\dots,a_m,0; \lambda)$, we have that
$$  
  c_{\Lambda^{0}} (q,t) P_{\Lambda^{0}}(x;q,t) \Big |_{x_{m+1}=0} =    c_{\Lambda} (q,t) P_{ \Lambda} (x_{(m+1)};q,t)
  $$
  where the number of variables $N$ in $x=(x_1,\dots,x_N)$ is such that
  $N>  \ell(\Lambda)$.
  \end{proposition}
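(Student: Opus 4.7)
The plan is to reduce the statement to a direct manipulation of the $t$-symmetrization formula for $P_{\Lambda^{0}}$. The key starting observation is that the compositions $\eta_{\Lambda^{0},N}$ and $\eta_{\Lambda,N}$ actually coincide: both equal $\eta := (a_{1},\dots,a_{m}, 0^{n}, \lambda_{\ell(\lambda)}, \dots, \lambda_{1})$ with $n = N-m-\ell(\lambda) \geq 1$ (the additional $0$ recorded by $\Lambda^{0}$ on the non-symmetric side is simply absorbed into the adjacent block of zeros). Consequently
\[
u_{\Lambda^{0},N}(t)\,P_{\Lambda^{0}}(x;q,t) = \mathcal{S}^{t}_{m+1,N}\,E_{\eta}(x;q,t).
\]
Moreover $\mathcal{S}^{t}_{m+1,N}$ is built solely from $T_{m+2},\dots,T_{N-1}$ and therefore does not involve $x_{m+1}$, so evaluation at $x_{m+1}=0$ commutes through the symmetrization; the whole problem reduces to computing $E_{\eta}(x)\big|_{x_{m+1}=0}$.

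The central technical step is to show
\[
E_{\eta}(x)\,\big|_{x_{m+1}=0} = E_{\eta_{\Lambda,N-1}}(x_{(m+1)}).
\]
Lemma~\ref{Camilo} applies directly only at the \emph{last} zero of $\eta$, namely position $m+n$, where it yields $E_{\eta}(x)\big|_{x_{m+n}=0} = E_{\eta_{(m+n)}}(x_{(m+n)})$, and one verifies that $\eta_{(m+n)}$ is precisely $\eta_{\Lambda,N-1}$. To transfer this to position $m+1$ I would invoke Remark~\ref{remarksym}: $E_{\eta}$ is symmetric in $x_{m+1},\dots,x_{m+n}$ (they correspond to equal entries of $\eta$) and $E_{\eta_{\Lambda,N-1}}$ is symmetric in $x_{m+1},\dots,x_{m+n-1}$. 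Swapping the roles of $x_{m+1}$ and $x_{m+n}$ in $E_{\eta}$ and then relabeling to fill the gap left by the deleted position identifies the two restrictions. This symmetry-plus-relabeling bookkeeping is the subtlest piece of the argument, though it is wholly elementary.

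Setting $y = (x_{1},\dots,x_{m},x_{m+2},\dots,x_{N}) = x_{(m+1)}$, the relabeling $x_{m+k} \mapsto y_{m+k-1}$ for $k \geq 2$ sends each $T_{m+k}$ on the $x$-side to $T_{m+k-1}$ on the $y$-side, so $\mathcal{S}^{t}_{m+1,N}$ translates to $\mathcal{S}^{t}_{m,N-1}$ acting on $y$. Combining with the previous step,
\[
u_{\Lambda^{0},N}(t)\,P_{\Lambda^{0}}(x;q,t)\big|_{x_{m+1}=0} = \mathcal{S}^{t}_{m,N-1}\,E_{\eta_{\Lambda,N-1}}(y) = u_{\Lambda,N-1}(t)\,P_{\Lambda}(y;q,t).
\]
A direct expansion of \eqref{normalization} gives $u_{\Lambda^{0},N}(t) = u_{\Lambda,N-1}(t) = [n-1]_{t^{-1}}!\, \prod_{i}[n_{\lambda}(i)]_{t^{-1}}!\, t^{(N-m-1)(N-m-2)/2}$, so $P_{\Lambda^{0}}(x;q,t)\big|_{x_{m+1}=0} = P_{\Lambda}(x_{(m+1)};q,t)$.

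It remains to check $c_{\Lambda^{0}}(q,t) = c_{\Lambda}(q,t)$. The diagrams of $\Lambda$ and $\Lambda^{0}$ differ only by an extra $(m+1)$-circle sitting alone in a new row of length $0$. Being isolated in its row, this circle cannot contribute to the arm-length of any cell; and since $m+1$ is the largest circle index it is never smaller than any row-ending circle, hence it contributes to no leg-length either. Every cell therefore contributes the same factor to $c_{\Lambda^{0}}$ as to $c_{\Lambda}$, and multiplying the equality of the $P$'s by this common factor delivers the proposition.
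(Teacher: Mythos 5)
Your proposal is correct and follows essentially the same route as the paper's own proof: both reduce to the identity $E_{\eta_{\Lambda^0,N}}(x)\big|_{x_{m+1}=0}=E_{\eta_{\Lambda,N-1}}(x_{(m+1)})$ obtained by applying Lemma~\ref{Camilo} at the last zero entry and then invoking the symmetry of $E_{\eta}$ in $x_{m+1},\dots,x_{m+n}$, together with the observations that $c_{\Lambda^0}=c_{\Lambda}$, $u_{\Lambda^0,N}=u_{\Lambda,N-1}$, and that $\mathcal S^t_{m+1,N}$ does not involve $x_{m+1}$. You merely spell out more explicitly the relabeling of the symmetrizer and the equality of the normalizations, which the paper leaves as "easy to see."
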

\begin{proof}
  The extra circle $m+1$ at the bottom of  $\Lambda^0$ does not affect any of the hook-lengths, from which we get that $ c_{\Lambda^{0}} (q,t)= c_{\Lambda} (q,t)$.
   It is also easy to see that $ u_{\Lambda^{0},N} (t)=  u_{\Lambda,N-1} (t)$.
From the definition of the $m$-symmetric Macdonald polynomials, we thus have left to prove that
$$
\mathcal S_{m+2,N}^t E_{\eta_{\Lambda^0,N}} (x;q,t) \Big |_{x_{m+1=0}}  =
  \mathcal S_{m+2,N}^t E_{\eta_{\Lambda,N-1}} (x_{(m+1)};q,t)
  $$
  Suppose that in $\eta_{\Lambda^0,N}$ there are zeroes in entries $m+1$ up to $m+n$.  From  Lemma~\ref{Camilo}, we obtain that 
  $$
 E_{\eta_{\Lambda^0,N}} (x;q,t) \Big |_{x_{m+n=0}}=  E_{\eta_{\Lambda,N-1}} (x_{(m+n)};q,t)
 $$
which implies that 
 $$
 E_{\eta_{\Lambda^0,N}} (x;q,t) \Big |_{x_{m+1=0}}=  E_{\eta_{\Lambda,N-1}} (x_{(m+1)};q,t)
 $$
since $E_{\eta_{\Lambda^0,N}} (x;q,t)$ is symmetric in the 
variables $x_{m+1},\dots,x_{m+n}$ by Remark~\ref{remarksym}.
 The proposition then follows given that $\mathcal S_{m+2,N}^t$ does not depend on $x_{m+1}$.  
\end{proof}  

\begin{proposition} \label{propMacdo0}
  For $\Lambda=(a_1,\dots,a_m ; \lambda )$ and  $ \Lambda^{\diamond} =(a_1,\dots,a_m,\lambda_j; \lambda \setminus \lambda_j)$, we have that
$$  
  c_{\Lambda^{\diamond}} (q,t) P_{\Lambda^{\diamond}}(x;q,t) \Big |_{x_{m+1}=0} = q^{\lambda_j} t^{\# \{ i \, | \, a_i < \lambda_j\} }   c_{\Lambda} (q,t) P_{ \Lambda} (x_{(m+1)};q,t)
  $$
  where the number of variables $N$ in $x=(x_1,\dots,x_N)$ is such that
  $N>  \ell(\Lambda)$.
  \end{proposition}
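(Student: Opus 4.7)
My plan is to mimic the proof of Proposition~\ref{propMacdo00}, but since here the $(m+1)$-th entry of $\eta_{\Lambda^\diamond,N}$ is the positive value $\lambda_j\neq 0$, the simple stability Lemma~\ref{Camilo} no longer suffices; one must instead use the evaluation formula \eqref{bigresult} followed by repeated applications of Lemma~\ref{lemmasymrel}. The desired factor $q^{\lambda_j}t^{\#\{i:a_i<\lambda_j\}}$ will then emerge from telescoping of eigenvalue ratios, combined with the ratios $c_{\Lambda^\diamond}(q,t)/c_\Lambda(q,t)$ and $u_{\Lambda^\diamond,N}(t)/u_{\Lambda,N-1}(t)$.

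First I would write $\eta_{\Lambda^\diamond,N} = (a_1,\dots,a_m,\lambda_j,0^n,\mu_{\ell-1},\dots,\mu_1)$ with $\mu = \lambda\setminus\lambda_j$ and $n = N-m-\ell(\lambda)$. Since the $(m+1)$-th entry is a positive $\lambda_j$ followed by exactly $n$ zeros, the hypotheses of the lemma giving \eqref{bigresult} are satisfied, yielding
$$\mathcal S^t_{m+1,N}\,E_{\eta_{\Lambda^\diamond,N}}(x;q,t)\Big|_{x_{m+1}=0} \;=\; \frac{A_{m+1}\,(1-t^n)}{t\,(1-t^{n-1}A_{m+1})}\, \mathcal S^t_{m+1,N}\,E_{\nu}(x_{(m+1)};q,t),$$
where $\nu = (a_1,\dots,a_m,\lambda_j,0^{n-1},\mu_{\ell-1},\dots,\mu_1)$ and $A_{m+1} = \bar\eta_{m+1}/\bar\eta_{m+2}$.

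Next I would use Lemma~\ref{lemmasymrel} repeatedly to shift $\lambda_j$ from position $m+1$ of $\nu$ rightwards, first past the remaining $n-1$ zeros and then past each entry $\mu_k$ satisfying $\mu_k > \lambda_j$, until the composition becomes $\eta_{\Lambda,N-1}$. Each such swap contributes a factor $(1-tA_i)/(t(1-A_i))$ with $A_i$ the current eigenvalue ratio; as $\lambda_j$ is the only entry whose row-index $r(\cdot)$ in the composition diagram changes during the process, these eigenvalue ratios telescope, combining with the factor from the previous step into a manageable closed form that depends only on $n$, $\lambda_j$, and the multiplicities of parts of $\lambda$ equal to or greater than $\lambda_j$.

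Finally I would multiply by $u_{\Lambda^\diamond,N}(t)/u_{\Lambda,N-1}(t)$ (a ratio of $t$-factorials encoding the change in the multiplicity of $\lambda_j$ in the symmetric part) and by $c_{\Lambda^\diamond}(q,t)/c_\Lambda(q,t)$, and verify that the combined product collapses to $q^{\lambda_j}t^{\#\{i:a_i<\lambda_j\}}$. The main obstacle is precisely this last verification: the hook ratio $c_{\Lambda^\diamond}/c_\Lambda$ is subtle since cells in the row of length $\lambda_j$ in $\Lambda^\diamond$ gain $+1$ arm-length from the newly attached $(m+1)$-circle, while neither this new circle (being the maximal filling) nor any of the pre-existing $i$-circles ($i\le m$) contribute new leg-lengths; all the powers of $q$ and $t$ arising from the telescoping, from $u$, and from $c$ must then cancel to leave exactly the claimed monomial, with no residual dependence on $n$ or on the individual eigenvalue ratios.
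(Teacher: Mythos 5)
Your overall strategy coincides with the paper's: apply \eqref{bigresult} to evaluate at $x_{m+1}=0$, use Lemma~\ref{lemmasymrel} repeatedly to move the entry $\lambda_j$ into place, and balance the resulting constant against the ratios of the $c$'s and $u$'s. However, two concrete points go wrong. A minor one first: after \eqref{bigresult} the composition is $(a_1,\dots,a_m,\lambda_j,0^{n-1},\nu_{\ell(\nu)},\dots,\nu_1)$ with $\nu=\lambda\setminus\lambda_j$ listed in \emph{increasing} order from left to right, and Lemma~\ref{lemmasymrel} only allows swapping an entry past a strictly smaller one; so $\lambda_j$ must be pushed past the zeros and then past the parts of $\nu$ that are \emph{smaller} than $\lambda_j$, not past those greater than $\lambda_j$ as you state.

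More seriously, your bookkeeping for $c_{\Lambda^\diamond}(q,t)/c_\Lambda(q,t)$ is incorrect. By the definition of leg-length, a circle at the bottom of the column of a cell $s$ contributes to $\ell(s)$ precisely when the row of $s$ ends with a circle of larger filling. In $\Lambda$ the row of $\lambda_j$ ends with no circle, so no circle contributes to the leg-lengths of its cells; in $\Lambda^\diamond$ that row ends with the circle $m+1$, whose filling is maximal, so \emph{every} pre-existing circle at the bottom of a column crossing that row now contributes. This is exactly the factor $t^{\circ_\Lambda(s)}$ appearing in \eqref{eqc}; your claim that neither the new circle nor the pre-existing circles contribute new leg-lengths drops it, and without it the powers of $t$ cannot collapse to $t^{\#\{i\,|\,a_i<\lambda_j\}}$. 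Finally, the step you defer as ``the main obstacle'' is in fact the bulk of the argument: the accumulated swap constant does not telescope to a monomial but must be shown, by induction on the number of exchanges, to equal the explicit product \eqref{product} (a product over the cells in the row of $\lambda_j$, depending on the full diagram and not only on $n$, $\lambda_j$ and multiplicities). As written, your proposal both sets up the wrong target identity because of the leg-length error and supplies no argument for the verification that constitutes most of the proof.
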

\begin{proof}

 We have that
\begin{equation} \label{eqc}
\frac{ c_{\Lambda^{\diamond}} (q,t)}{ c_{\Lambda} (q,t)}= \prod_{s \in \lambda_j}\frac{1-q^{a_{ \Lambda}(s)+1} t^{l_{ \Lambda}(s)+1+\circ_{\Lambda}(s)}}{1-q^{a_{  \Lambda}(s)} t^{l_{ \Lambda}(s)+1}}
\end{equation}
where the product is over all boxes $s$ in the row corresponding to $\lambda_j$ in the diagram of $\Lambda$  (the highest row of size $\lambda_j$ that does not end with a circle in the diagram of $\Lambda$), and where $\circ_{\Lambda}(s)$ is equal to the number of circles in the column of $s$ (the new circle $m+1$ at the end of the row of size $\lambda_j$ will  always be larger than any circle in the column of $s$).

We also have that the normalization $u_{\Lambda,N}$ in \eqref{normalization} is such that
 \begin{equation} \label{eqNorm}
\frac{ u_{\Lambda,N-1} (t)}{ u_{\Lambda^{\diamond},N} (t)}= \frac{(1-t^{-n_\lambda(\lambda_j)})}{(1-t^{-n})} = \frac{(1-t^{n_\lambda(\lambda_j)})}{t^{n_\lambda(\lambda_j)-n}(1-t^{n})}
 \end{equation}

 Let $\nu=\lambda \setminus \lambda_j$ and let $\hat \lambda$ be the partition obtained from $\lambda$ by removing all the parts larger than $\lambda_j$.
Using \eqref{bigresult}, we obtain that
$$
\mathcal S_{m+2,N}^t E_{\eta_{\Lambda^\diamond,N}} (x;q,t) \Big |_{x_{m+1=0}}  = \frac{A_{m+1}(1-t^n)}{t(1-t^{n-1}A_{m+1})}
 \mathcal S_{m+2,N}^t E_{a_1,\dots,a_m,\lambda_j,0^{n-1},\nu_{\ell(\nu)},\dots,\nu_1 } (x_{(m+1)};q,t)
 $$
  We now need to use 
\eqref{symrel} again and again to move $\lambda_j$ to the right
to obtain the composition
$$(a_1,\dots,a_m,0^{n-1},\lambda_{\lambda(\nu)},\dots,\lambda_1) =\eta_{\Lambda,N-1}$$
Let $f_\Lambda(q,t)$ be the constant that appears in this process, that is, 
 $$
\mathcal S_{m+2,N}^t E_{a_1,\dots,a_m,\lambda_1,0^{n-1},\nu_{\ell(\nu)},\dots,\nu_1 }(x_{(m+1)};q,t)=
f_{\Lambda}(q,t)
\mathcal S_{m+2,N}^t E_{\eta_{\Lambda,N-1}}(x_{(m+1)};q,t)
 $$
Taking \eqref{eqc} and \eqref{eqNorm} into consideration and using
$A_{m+1}=q^{\lambda_j} t^{\# \{ i \, | \, a_i < \lambda_j\} + \ell(\hat \lambda)}$, the proposition will follow from the definition of the $m$-symmetric Macdonald polynomials if we can
prove that
  $$
\left(\prod_{s \in \lambda_j}\frac{1-q^{a_{ \Lambda}(s)+1} t^{l_{ \Lambda}(s)+1+\circ_{\Lambda}(s)}}{1-q^{a_{  \Lambda}(s)} t^{l_{ \Lambda}(s)+1}} \right) \frac{q^{\lambda_j}t^{\# +\ell(\hat \lambda)-n_\lambda(\lambda_j)+n-1}(1-t^{n_\lambda(\lambda_j)})}{(1-q^{\lambda_j} t^{\#  + \ell(\hat \lambda)+n-1})} f_\Lambda (q,t)=q^{\lambda_1} t^{\# }
$$
where we use the shorthand $\#$ for $\# \{ i \, | \, a_i < \lambda_j\}$.
This amounts to showing that
\begin{equation} \label{product}
f_\Lambda (q,t) = \left(\prod_{s \in \lambda_j}\frac{1-q^{a_{  \Lambda}(s)} t^{l_{ \Lambda}(s)+1}}{1-q^{a_{ \Lambda}(s)+1} t^{l_{ \Lambda}(s)+1+\circ_{\Lambda}(s)}} \right) \frac{(1-q^{\lambda_j} t^{\# + \ell(\hat \lambda)+n-1})} {t^{\ell(\hat \lambda)-n_\lambda(\lambda_j)+n-1}(1-t^{n_\lambda(\lambda_j)})}
\end{equation}
We proceed by induction on the number of exchanges that need to be done in order to move $\lambda_j$ to the right.

If there are no exchanges to do, we need to show that $f_\Lambda(q,t)=1$.
In this case, we have $n=1$ and $\lambda=(\lambda_j^{n_\lambda(\lambda_j)})$. Moreover, in the diagram of $\Lambda$, all the rows that are shorter than $\lambda_j$ end with a circle.  Hence, if there is a cell  $s'$  immediately to the right of $s$ in the row of $\lambda_j$, we get
$$
\frac{1-q^{a_{  \Lambda}(s)} t^{l_{ \Lambda}(s)+1}}{1-q^{a_{ \Lambda}(s')+1} t^{l_{ \Lambda}(s')+1+\circ_{\Lambda}(s)}}=1
$$
The first product in $f_\Lambda(q,t)$ thus simplifies to
$$
\prod_{s \in \lambda_j}\frac{1-q^{a_{  \Lambda}(s)} t^{l_{ \Lambda}(s)+1}}{1-q^{a_{ \Lambda}(s)+1} t^{l_{ \Lambda}(s)+1+\circ_{\Lambda}(s)}} = \frac{(1-t^{n_\lambda(\lambda_j)})}{(1-q^{\lambda_j}t^{\# +n_\lambda(\lambda_j)})} 
$$
From  \eqref{product}, this yields that $f_\Lambda(q,t)=1$
since $n=1$ and $n_\lambda(\lambda_j)=\ell(\hat \lambda)$.

We now consider the general case.  First suppose that $n=1$ (the case in which there are no more $0$'s with which to do do an exchange).  The first exchange that needs to be done is with a circle in a row of size $\lambda_\ell=\lambda_{\ell(\lambda)}$. From \eqref{symrel},
it gives rise to the term
\begin{equation} \label{eqavant}
\frac{1-q^{\lambda_j-\lambda_\ell}t^{l_{\Lambda}(s')+2+\circ_\Lambda(s')}}{t(1-q^{\lambda_j-\lambda_\ell}t^{l_{\Lambda}(s')+1+\circ_\Lambda(s')})}
\end{equation}
where $s'$ is the cell in the row of $\lambda_j$ and column $\lambda_j-\lambda_\ell+1$. The remaining exchanges are equal by induction to $f_\Omega(q,t)$, where
$\Omega$ is $\Lambda$ with the row corresponding to $\lambda_\ell$ now having a circle.  Considering that $n=1$, we obtain
\begin{equation} \label{eqfo}
f_\Omega(q,t)=\left(\prod_{s \in \lambda_1}\frac{1-q^{a_{  \Omega}(s)} t^{l_{ \Omega}(s)+1}}{1-q^{a_{ \Omega}(s)+1} t^{l_{ \Omega}(s)+1+\circ_{\Omega}(s)}} \right) \frac{(1-q^{\lambda_j} t^{\# + \ell(\hat \lambda)})} {t^{\ell(\hat \lambda)-1-n_\lambda(\lambda_j)}(1-t^{n_\lambda(\lambda_j)})}
\end{equation}
where we used the fact that, when compared to $\Lambda$, $\# $ increased by 1 while $\ell(\lambda)$ decreased by one.  We thus have to show that the product of \eqref{eqavant} and \eqref{eqfo} gives \eqref{product} in the case $n=1$.  After performing easy cancellations, this amounts to showing that
$$
\frac{1-q^{\lambda_j-\lambda_\ell}t^{l_{\Lambda}(s')+2+\circ_\Lambda(s')}}{1-q^{\lambda_j-\lambda_\ell}t^{l_{\Lambda}(s')+1+\circ_\Lambda(s')}}\left(\prod_{s \in \lambda_1}\frac{1-q^{a_{  \Omega}(s)} t^{l_{ \Omega}(s)+1}}{1-q^{a_{ \Omega}(s)+1} t^{l_{ \Omega}(s)+1+\circ_{\Omega}(s)}} \right) 
 = \left(\prod_{s \in \lambda_1}\frac{1-q^{a_{  \Lambda}(s)} t^{l_{ \Lambda}(s)+1}}{1-q^{a_{ \Lambda}(s)+1} t^{l_{ \Lambda}(s)+1+\circ_{\Lambda}(s)}} \right) 
$$
 The only difference in the products over the cells $s$ is when $s=s'$, where
$s'$ lies in the column of the extra circle in $\Omega$ (in column $\lambda_j-\lambda_\ell +1$).  
Hence, we need to show that
$$
\frac{1-q^{\lambda_j-\lambda_\ell}t^{l_{\Lambda}(s')+2+\circ_\Lambda(s')}}{1-q^{\lambda_j-\lambda_\ell}t^{l_{\Lambda}(s')+1+\circ_\Lambda(s')}}
\frac{1-q^{a_{  \Omega}(s')} t^{l_{ \Omega}(s')+1}}{1-q^{a_{ \Omega}(s')+1} t^{l_{ \Omega}(s')+1+\circ_{\Omega}(s')}}
 = \frac{1-q^{a_{  \Lambda}(s')} t^{l_{ \Lambda}(s')+1}}{1-q^{a_{ \Lambda}(s')+1} t^{l_{ \Lambda}(s')+1+\circ_{\Lambda}(s')}} 
$$
 Using $a_{\Lambda}(s')=a_{\Omega}(s')=\lambda_j-\lambda_{\ell}-1$, $l_{\Lambda}(s')=l_{\Omega}(s')$,
 and $\circ_{\Omega}(s')=\circ_{\Lambda}(s')+1$, the result is seen to hold.

 Finally, in the case $n>1$, the proof is exactly as in the previous case except that $s'$ is now in the first column, $\lambda_\ell$ is replaced by 0, and the quantities $(1-q^{\lambda_j} t^{\# + \ell(\lambda)})$ and $t^{\ell(\hat \lambda)-n_\lambda(\lambda_j)}$ are  replaced by $(1-q^{\lambda_j} t^{\# + \ell(\hat \lambda)+n-1})$ and $t^{\ell(\hat \lambda)-n_\lambda(\lambda_j)+n-1}$ respectively.

\end{proof}  

We will finish this appendix with a proof Theorem~\ref{propom1}.
Recall that its main statement says that
for any $N$ and any $m \geq 1$, we have that
  $$
\Psi_N J_\Lambda (x_1,\dots,x_N;q,t)= t^{-\# \{ 2 \leq j \leq m \, | \, a_j \leq  a_1 \}} 
J_{\Lambda^\square} (x_1,\dots,x_N;q,t)
$$
where $\Lambda^\square =\bigl((a_2,\dots,a_{m}); \lambda \cup (a_1+1)\bigr)$.
\begin{proof}[Proof of Theorem~\ref{propom1}]
  We first establish that
\begin{equation} \label{eqPsi2}
\Psi_N \mathcal S_{m+1,N}= (1-t)\mathcal S_{m,N} \Phi_q
\end{equation}
for $m \geq 1$. 
From the braid relations, it is easy to see that
$$
(T_{N-1} \cdots T_1) T_i = T_{i-1} (T_{N-1} \cdots T_1)  
$$
for all $i=2,\dots,N-1$. Hence, for $m\geq 1$, we obtain
$$
\Phi_q \mathcal S_{m+1,N}= \mathcal S_{m,N-1} \Phi_q 
$$
where $\Phi_q$ is defined in \eqref{eqPhi}. It is then immediate from \eqref{extraN} that \eqref{eqPsi2} holds.

Now, given that by definition
  $$
J_\Lambda = \frac{c_\Lambda(q,t)}{u_{\Lambda,N}(t)} \mathcal S_{m+1,N} E_{\Lambda,N} 
$$
we have to show that
\begin{equation} \label{toShow}
\Psi_N  \mathcal S_{m+1,N} E_{\eta_{\Lambda,N}}  = t^{-\# \{ 2 \leq j \leq m \, | \, a_j \leq  a_1 \}}    \frac{c_\Lambda^\square(q,t)}{c_{\Lambda}(q,t)}  \frac{u_{\Lambda,N}(t)}{u_{\Lambda^\square,N}(t)} \mathcal S_{m,N} E_{\eta_{\Lambda^\square,N}} 
\end{equation}
When the circle indexed by a 1 in the diagram of $\Lambda$ becomes a square, it will only change the contribution in $c_\Lambda(q,t)$ of the cells in the column of the circle indexed by a 1 that lie in rows that do not end with a circle (otherwise the cell indexed by a 1 already contributes to the leg-length).  Hence, also considering the change in the symmetric rows of length $a_1+1$, we obtain
\begin{equation} \label{eqC}
 \frac{c_\Lambda^\square(q,t)}{c_{\Lambda}(q,t)} = \left( \prod_{s \in {  \rm col}(1)} \frac{1-q^{a_\Lambda(s)}t^{l_\Lambda(s)+2}}{1-q^{a_\Lambda(s)}t^{l_\Lambda(s)+1}} \right) (1-t^{n_\lambda(a_1+1)+1})
\end{equation}
 where the product is over the cells in the column of the circle indexed by 1 that lie in  rows that are larger than $a_1+1$ and that do not end with a circle.
 We also have
\begin{equation} \label{equ}
 \frac{u_{\Lambda,N}(t)}{u_{\Lambda^\square,N}(t)}= \left(\frac{1-t^{-1}}{1-t^{-n_\lambda(a_1+1)-1}}\right) \frac{t^{(N-m)(N-m-1)/2}}{t^{(N-m+1)(N-m)/2}}= \left(\frac{1-t}{1-t^{n_\lambda(a_1+1)+1}}\right) t^{n_\lambda(a_1+1)+m-N}
 \end{equation}
On the other hand, we obtain from \eqref{eqPhi} and \eqref{eqPsi} that
\begin{equation} \label{otherhand}
\Psi_N  \mathcal S_{m+1,N} E_{\eta_{\Lambda,N}} = (1-t) t^{r_\Lambda(1)-N}  \mathcal S_{m,N} E_{\Phi (\eta_{\Lambda,N})}  
\end{equation}
Using Lemma~\ref{lemmasymrel} again and again to reorder the last entry of $\Phi (\eta_{\Lambda,N})$, we  obtain
$$
\mathcal S_{m,N} E_{\Phi (\eta_{\Lambda,N})}  =t^{-\#\{j \, | \lambda_j > a_1+1\}} \left( \prod_{s \in {  \rm col}(1)} \frac{1-q^{a_\Lambda(s)}t^{l_\Lambda(s)+2}}{1-q^{a_\Lambda(s)}t^{l_\Lambda(s)+1}} \right) \mathcal S_{m,N} E_{\eta_{\Lambda^\square,N}}  
$$
where the product is again  over the cells in the column of the circle indexed by 1 that lie in  rows that are larger than $a_1+1$ and that do not end with a circle.
Therefore, \eqref{otherhand} becomes
$$
\Psi_N  \mathcal S_{m+1,N} E_{\eta_{\Lambda,N}}= (1-t) t^{r_\Lambda(1)-N-\#\{j \, | \lambda_j > a_1+1\}}  \left( \prod_{s \in {  \rm col}(1)} \frac{1-q^{a_\Lambda(s)}t^{l_\Lambda(s)+2}}{1-q^{a_\Lambda(s)}t^{l_\Lambda(s)+1}} \right) \mathcal S_{m,N} E_{\eta_{\Lambda^\square,N}}  
$$
Using \eqref{eqC} and \eqref{equ}, we then see from the previous equation that \eqref{toShow} will hold if 
$$
r_\Lambda(1)-N-\#\{j \, | \lambda_j > a_1+1\}= -{\# \{ 2 \leq j \leq m \, | \, a_j \leq  a_1 \}}  +n_\lambda(a_1+1)+m-N
$$
But this easily follows after realizing that $r_\Lambda(1)=\ell(\Lambda)-{\# \{ 2 \leq j \leq m \, | \, a_j \leq  a_1 \}}-{\# \{ j  \, | \, \lambda_j \leq  a_1 \}}$ and that $\ell(\Lambda)=m+{\# \{ j  \, | \, \lambda_j \leq  a_1 \}}+{\# \{ j  \, | \, \lambda_j >  a_1+1 \}}+n_\lambda(a_1+1)$.
\end{proof}

\end{appendix}

\end{document}